\title[]{Long Time Dynamics of Nonequilibrium Electroconvection}
\author{Fizay-Noah Lee}
\address{Program in Applied and Computational Mathematics, Princeton University, Princeton, NJ 08544}
\email{fizaynoah@princeton.edu}
\newcommand{\bA}{\mathbb{A}}
\newcommand{\ep}{\epsilon}
\newcommand{\pa}{\partial}
\newcommand{\la}{\label}
\newcommand{\fr}{\frac}
\newcommand{\na}{\nabla}
\newcommand{\be}{\begin{equation}}
\newcommand{\ee}{\end{equation}}
\newcommand{\bal}{\begin{aligned}}
\newcommand{\eal}{\end{aligned}}
\newcommand{\ba}{\begin{array}{l}}
\newcommand{\ea}{\end{array}}
\newtheorem{thm}{Theorem}
\newtheorem{prop}{Proposition}
\newtheorem{lemma}{Lemma}
\newtheorem{lem}{Lemma}
\newtheorem{rem}{Remark}
\newtheorem{defi}{Definition}
\newtheorem{cor}{Corollary}
\newcommand{\beg}{\begin}
\renewcommand{\div}{{\mbox{div}\,}}
\newcommand{\D}{\Delta}
\newcommand{\ug}{\underline\gamma}
\newcommand{\og}{\overline\gamma}
\newcommand{\um}{\underline M}
\newcommand{\om}{\overline M}
\newcommand{\BC}{\eqref{gamma}-\eqref{noslip}}
\newcommand{\mA}{\mathcal{A}}
\newcommand{\mV}{\mathcal{V}}
\newcommand{\mH}{\mathcal{H}}
\newcommand{\mB}{\mathcal{B}}
\newcommand{\mL}{\mathcal{L}}
\newcommand{\mF}{\mathcal{F}}
\newcommand{\tr}{\text{Tr}}
\newcommand{\rw}{{\mathring w}}
\newcommand{\rc}{{\mathring c}}
\newcommand{\ru}{{\mathring u}}
\newcommand{\rp}{{\mathring \Phi}}
\newcommand{\rr}{{\mathring\rho}}
\newcommand{\mVo}{\mathcal{V}_0}
\date{today}
\keywords{electroconvection, ionic electrodiffusion, electrokinetic instability, electroneutrality, global attractor, singular limit, Nernst-Planck, Navier-Stokes}
\begin{document}

\noindent\thanks{\em{ MSC Classification:  35Q30, 35Q35, 35Q92.}}

\begin{abstract}
    The Nernst-Planck-Stokes (NPS) system models electroconvection of ions in a fluid. We consider the system, for two oppositely charged ionic species, on three dimensional bounded domains with Dirichlet boundary conditions for the ionic concentrations (modelling ion selectivity), Dirichlet boundary conditions for the electrical potential (modelling an applied potential), and no-slip boundary conditions for the fluid velocity. In this paper, we obtain quantitative bounds on solutions of the NPS system in the long time limit, which we use to prove 1) the existence of a compact global attractor with finite fractal (box-counting) dimension and 2) space-time averaged electroneutrality $\rho\approx 0$ in the singular limit of Debye length going to zero, $\ep\to 0$.
\end{abstract}

\maketitle

\section{Introduction}
We consider the Nernst-Planck-Stokes (NPS) system on an open connected bounded domain $\Omega\subset\mathbb{R}^3$ with smooth boundary. This system models electroconvection of ions in a fluid in the presence of boundaries. In this paper we focus on the case of two oppositely charged ionic species (valences $\pm1$). The full system is then given by the Nernst-Planck equations
\be
\bal
\pa_t c_1+u\cdot\na c_1&=D_1\div(\na c_1+c_1\na\Phi)\\
\pa_t c_2+u\cdot\na c_2&=D_2\div(\na c_2-c_2\na\Phi)\la{np}
\eal
\ee
coupled to the Poisson equation
\be
-\epsilon\D\Phi=c_1-c_2=\rho\la{pois}
\ee
and to the time dependent Stokes system
\be
\pa_t u-\nu\D u+\na p=-K\rho\na\Phi,\quad \div u=0.\la{stokes}
\ee
Above, $c_1$ and $c_2$ are the local ionic concentrations of the cation and anion, respectively, $\rho$ is a rescaled local charge density, $u$ is the fluid velocity, and $\Phi$ is a rescaled electrical potential. The positive constant $K$ is a coupling constant given by the product of Boltzmann's constant and the absolute temperature. The positive constants $D_i$ are the ionic diffusitives, and $\epsilon$ is a rescaled dielectric permittivity of the solvent and is proportional to the square of the Debye length. The Debye length in typical electrolytes (e.g. water) is very small, on the order of a few nanometers. Lastly, $\nu>0$ is the kinematic viscoscity of the fluid. The dimensional counterparts of $\Phi$ and $\rho$ are given by $(K/e)\Phi$ and $e\rho$, respectively, where $e$ is elementary charge. 

According to \eqref{np}, ionic concentrations are transported by the fluid, diffuse under their own concentration gradients, and are transported by the electrical field, which in turn, by \eqref{pois}, is generated by the charge density. The fluid itself is also forced by the electrical field, as indicated by the nonlinear forcing term in \eqref{stokes}.

Electroconvective systems like NPS have widespread applications in many fields of biology and chemistry and in engineering sciences (e.g. semiconductors, desalination processes, electrodialysis, etc.). We refer the reader to the books \cite{mock,prob,rubibook} for discussions on the applications of ionic electrodiffusion and electroconvection and for the physical derivation of the equations \eqref{np}-\eqref{stokes}.

For boundary conditions, we distinguish, as in \cite{NPS}, between equilibrium and nonequilibrium boundary conditions. \textit{Equilibrium boundary conditions} are boundary conditions on $c_i$ and $\Phi$ that admit steady state NPS solutions with \textit{constant} electrochemical potentials, defined by
\be
\mu_1=\log c_1+\Phi,\quad \mu_2=\log c_2-\Phi.
\ee
In this case, these steady state solutions are also unique. Equilibrium boundary conditions include the case where $c_i$ obey \textit{blocking (no-flux) boundary conditions}
\be
{\pa_n\mu_i}_{|\pa\Omega}=0,\quad i=1,2
\ee
and $\Phi$ obey inhomogeneous Dirichlet, Neumann, or Robin boundary conditions. Also included is the case where $\Phi$ obeys Dirichlet boundary conditions and $c_i$ obey blocking boundary conditions on boundary portions $S_i\subset\pa\Omega$ while satisfying Dirichlet boundary conditions on the complements $\pa\Omega\setminus S_i$ in such a way that
\be
{\mu_1}_{|\pa\Omega\setminus S_1}=\text{constant},\quad {\mu_2}_{|\pa\Omega\setminus S_2}=\text{constant}.\la{us}
\ee
We remark that $S_1$ and $S_2$ need not be identical, and we allow for $S_i\in\{\emptyset,\pa\Omega\}.$ In all the cases mentioned above, we take no-slip boundary conditions for the fluid velocity $u_{|\pa\Omega}=0$.

In the mathematics literature, many authors have considered various Nernst-Planck models in the context of equilibrium boundary conditions. For the uncoupled Nernst-Planck equations, global existence of strong solutions is proved in \cite{biler, choi} for various equilibrium boundary conditions with blocking boundary conditions for $c_i$. Global existence of strong solutions of the two dimensional Nernst-Planck-Navier-Stokes (NPNS) system with various equilibrium boundary conditions is proved in \cite{bothe, ci,schmuck}. A common feature in analyzing Nernst-Planck/NPNS with equilibrium boundary conditions is the existence of a natural dissipative energy inequality, which gives a priori bounds, from which higher regularity bounds are obtained through boostrapping. This same inequality is also used in \cite{np3d,ryham} and \cite{fnl} to obtain small data global regularity for three dimensional NPNS and large data global regularity for three dimensional NPS, respectively. 

For equilibrium boundary conditions, the corresponding unique steady state solution (\textit{Boltzmann state} solution) is characterized by zero fluid flow $u^*\equiv 0$ and concentrations $c_i^*$, related to $\Phi^*$ by
\be
c_1^* = Z_1^{-1} e^{-\Phi^*},\quad c_2^* = Z_2^{-1} e^{\Phi^*}\la{c*}
\ee
with $Z_i>0$ appropriate constants, determined by the (equilibrium) boundary conditions and/or initial conditions \cite{ci,np3d}. Here, $\Phi^*$ is the unique solution to a nonlinear Poisson equation, called the Poisson-Boltzmann equation
\be
-\epsilon\D\Phi^* = Z_1^{-1} e^{-\Phi^*}-Z_2^{-1} e^{\Phi^*}\la{PB}
\ee
with the relevant boundary conditions. In two dimensions, these Boltzmann states are globally asymptotically stable - that is, starting from arbitrarily large initial conditions, solutions converge towards Boltzmann states. In three dimensions, Boltzmann states are at least locally asymptotically stable - solutions converge towards the relevant Botlzmann state, provided they are initially sufficiently close to them. These facts are established for NPNS in \cite{ci} (2D) and \cite{np3d} (3D).


In full generality, one may define {nonequilibrium boundary conditions} in the negation - that is, \textit{nonequilibrium boundary conditions} are boundary conditions that do \textit{not} admit steady state NPS solutions with constant electrochemical potentials. A concrete example is given by $c_1$ and $\Phi$ satisfying Dirichlet boundary conditions on a nonempty boundary portion $S\subset\pa\Omega$ in such a way that $(\log  c_1+\Phi)_{|S}$ is not constant.

Relative to the equilibrium case, there are much fewer rigorous mathematical results for nonequilibrium boundary conditions. On the other hand, physically, nonequilibrium boundary conditions are linked to various interesting phenomena. For example, both numerical \cite{davidson,pham} and experimental \cite{kang,rubinstein} evidence exists for the development of instabilities to NPNS/NPS systems associated with nonequilibrium boundary conditions, whereby for boundary conditions sufficiently far from equilibrium, vortical and even chaotic flow patterns are observed adjacent to the fluid/boundary interface. Such observations are reminiscent of the pattern formation and thermal turbulence associated with Rayleigh-Bénard convection. Despite the prevalence and widespread application of this so-called \textit{electrokinetic instability}, our theoretical understanding of this phenomenon is far from complete. On one hand, the question of the exact mechanism behind the development of these instabilities is yet to be fully resolved (c.f. classical works \cite{rubizaltz,zaltzrubi} and also the works \cite{pham,rubisegel}). On the other, a mathematically rigorous qualitative picture of the long time dynamics of solutions of NPNS/NPS for nonequililbrium boundary conditions \textit{after} the onset of instability is also, to this date, unavailable.  

In this paper, our focus is on the long time regime, long after the onset of electrokinetic instability. For nonequilibrium boundary conditions, however, even the question of global existence of strong solutions of NPNS/NPS is not resolved in full generality. The works \cite{ci} (2D NPNS) and \cite{cil,fnl} (3D NPS) establish global regularity for large classes of nonequilibrium boundary conditions. In particular, in \cite{cil}, the authors consider nonequilibrium boundary conditions whereby both the concentrations $c_i$ and $\Phi$ satisfy \textit{arbitrary} Dirichlet boundary conditions, together with no-slip for $u$:
\begin{align}
{c_i}_{|\pa\Omega}&=\gamma_i>0,\quad i=1,2\la{gamma}\\
\Phi_{|\pa\Omega}&=W\la{W}\\
u_{|\pa\Omega}&=0.\la{noslip}
\end{align}
We observe that other than for special choices of $\gamma_i$ and $W$, the relation $\log \gamma_1+\Phi=$ constant, $\log \gamma_2-\Phi=$ constant does not hold, implying that, generally, boundary conditions \eqref{gamma}-\eqref{noslip} are nonequilibrium.

In this paper, we consider the same class of boundary conditions as in \cite{cil}, given by \eqref{gamma}-\eqref{noslip}. Also, for simplicity, we assume $\gamma_i$ and $W$ are smooth. Beyond the aforementioned global regularity however, there are few rigorous results on the long time dynamics of solutions of NPS or NPNS with nonequilibrium boundary conditions \eqref{gamma}-\eqref{noslip}; the goal of this paper is to study this long time behavior. 

In \cite{abdo}, the authors establish the existence of a finite dimensional (i.e. finite fractal (box-counting) dimension) global attractor for NPNS on the two dimensional torus, in the presence of a time independent external force acting on the fluid velocity $u$ (in addition to the time dependent nonlinear electrical forcing term). A global attractor is a compact subset of phase space that is invariant under the solution map and attracting in the sense that all time dependent solutions are drawn arbitrarily close to it (in the appropriate phase space topology) in the long time limit. However, insofar as it is our goal to analyze the long time behavior of NPS solutions beyond the onset of electrokinetic instability, it is important to account for the presence of boundaries as this is a phenomenon that takes place, empirically, in a boundary layer. This motivates our first main result, in Section \ref{FD}, which establishes the existence of a finite dimensional global attractor for NPS on an arbitrary open, connected, bounded domain of $\mathbb{R}^3$ with smooth boundary, with boundary conditions \eqref{gamma}-\eqref{noslip}. Our results (Theorems \ref{globa} and \ref{fdf}) are the first such results for the NPS system on a domain with boundary. The existence of a finite dimensional global attractor, in particular, implies that the long time trajectories of NPS solutions are limited to a finite dimensional manifold, \textit{independent} of initial conditions, thus simplifying, at least on a theoretical level, the long time behavior of electroconvective flow, even in the emergence of the turbulent behavior observed experimentally/numerically.

It is worth noting that, at this stage, it is an open problem as to whether the NPS system has a global attractor for \textit{arbitrary} (but sufficiently regular) nonequiliubrium boundary conditions. For example, while there are other classes of nonequilibrium boundary conditions for which unique, global strong solutions are known to exist \cite{ci,fnl}, the analysis of this paper does not naturally extend to these other cases (these are cases that may be described as having \textit{mixed} boundary conditions; say, the ionic concentrations satisfying a mixture of Dirichlet and blocking boundary conditions). For our boundary conditions \eqref{gamma}-\eqref{noslip}, the starting point of our proof of the existence of a global attractor is the energy inequality given in Lemma \ref{l1}, which gives absorbing ball, and in particular \textit{uniform in time}, bounds on solutions of NPS (this lemma is taken from \cite{cil}, where it is used to prove global regularity of the NPS system). This situation is in contrast to the cases in \cite{ci,fnl} where the corresponding bounds are at least \textit{exponential in time}. Determining whether or not a global attractor exists in these other cases, then, critically depends on whether or not these exponential/superexponential in time bounds are sharp or can be improved. This remains an open task.

We also remark that an energy inequality similar to that of Lemma \ref{l1} is also used in \cite{abdo} to prove the existence of a global attractor for NPNS on the two dimensional torus. One critical difference, however, is that the energy functional of Lemma \ref{l1} is a linear combination, with precisely chosen coefficients (c.f. \eqref{mathcalf}), of various norms, including the $L^2$ norms of the ionic concentrations $c_i$, the kinetic energy $\|u\|_{L^2}^2$ and the $H^{-1}$ norm of the charge density $\rho$. It it the presence of boundaries that necessitates the use of this specific energy functional, a need that does not arise in the absence of boundaries.

Section \ref{EN} of this paper is devoted to a feature of charged fluids called electroneutrality, which refers to the fact that at distances larger than the Debye length from charged boundaries, roughly neutral charge density is maintained $|\rho| \ll1$. Formally, electroneutrality is also a natural expectation given that $\ep$ is a small parameter (c.f. \eqref{pois}). This leads to the study of the NPS system in the limit of $\ep\to 0$. The mathematical challenge posed by this limit is the fact that $\ep$ is a coefficient of a second order operator, thus making the limit singular.

In the case of equilibrium boundary conditions (heuristically, when no electric current is flowing through the fluid), electroneutrality is verified rigorously in \cite{EN, hsieh}. The mathematical statement of electroneutrality for equilibrium boundary conditions is that
\be
\lim_{\epsilon\to 0}\lim_{t\to\infty}\sup_{x\in K}|\rho_\epsilon(t,x)|=0.\la{inten}
\ee
Above the $\epsilon$ subscript indicates the dependence of $\rho$ on the value of $\epsilon$, and $K\subset\Omega$ is any compact subset. Thus, to be more precise, \eqref{inten} establishes uniform pointwise electroneutrality, away from boundaries, in the long time limit. The proof of \eqref{inten} in \cite{EN} follows a two-step analysis. First, in equilibrium, the existence of a dissipative energy inequality implies that the global attractor of $c_{i,\epsilon}(t,x)$ is a singleton (c.f. \eqref{c*}), and thus $\rho_\epsilon(t,x)\to_{t\to\infty}\rho_\epsilon^*(x)$, where $\rho_\epsilon^*$ is determined uniquely by the prescribed data. The second step is to then analyze the steady state charge density $\rho_\epsilon^*$ and to establish $\lim_{\epsilon\to 0}\sup_{x\in K}|\rho_\epsilon^*(x)|=0$ on compact subsets $K\subset\Omega$.

Now the question we consider is whether electroneutrality is just an equilibrium feature. For general Dirichlet boundary conditions on $c_i$ that are nonequilibrium, it is not known and not expected that the global attractor is a singleton. So, it is not possible to identically replicate the two-step analysis of \cite{EN} to prove an electroneutrality statement. Nonetheless, the existence of a compact global attractor allows us to replace the analysis on a fixed steady state with the analysis of general elements of the global attractor obtained in Section \ref{FD}. Due to the added level of generality, we do not expect a result as strong as \eqref{inten} to hold. However, we prove the following weaker statement, which establishes electroneutrality in a space-time averaged sense,
\be
\lim_{\epsilon\to 0}\lim_{T\to\infty}\fr{1}{T}\int_0^T\left(\int_\Omega \rho_\epsilon^2(t,x)\,dx\right)\,dt=0.
\ee
We emphasize that the above result (see Theorem \ref{aen} for the precise statement) holds for arbitrary Dirichlet data on $c_i$, which in particular includes cases where, based on experiments and numerical simulations, the time dependent solutions are expected to exhibit unstable and even chaotic behavior. While in spirit, the proof of this space-time averaged electroneutrality is a consequence of the absorbing ball properties established in Section \ref{FD}, the actual proof uses a different absorbing ball estimate, with bounds in $L^\infty$. Namely, we use a result proved in \cite{NPS}, which gives long time upper and lower pointwise bounds on the ionic concentrations $c_i$ in terms of only the boundary data (Theorem \ref{maxthm}). In partciular, these bounds are independent of $\epsilon$. In contrast, the bounds obtained in Section \ref{FD} are in $L^2$ based Sobolev spaces and depend on $\epsilon$ in such a way that they become unbounded in the limit of $\epsilon\to 0$. To the best of our knowledge, no prior mathematical work exists on electroneutrality under nonequilibrium conditions.

We note that our focus on the NPS system, as opposed to the NPNS system, is physically relevant in the sense that the typical length scale of electroconvective systems is small enough to justify the use of the Stokes equations. However, this choice is also due to our choice of spatial dimension (i.e. three) and the fact that we have global regularity in the case of three dimensional NPS for our boundary conditions (see Theorem \ref{twospecies} below). This is not the case for three dimensional NPNS, due to coupling with the Navier-Stokes equations for which global regularity in three dimensions, as in well known, is an open problem. The results of this paper, then, hold for NPNS in two dimensions, where global regularity is known and for which adequate quantitative bounds on the velocity $u$ are available. The proof of the corresponding results in this case follow from the computations of this paper, with straightforward modifications having to do with the added presence of the nonlinear term in the Navier-Stokes equations; however, we shall not pursue this extension here.

This paper is structured is follows. In Section \ref{pre}, we introduce the relevant functional setting along with various notation. In Section \ref{FD}, we prove the existence of a finite dimensional global attractor to the NPS system. In Section \ref{EN} we show that electroneutrality holds for NPS in a space-time averaged sense.

\section{Preliminaries}\la{pre}
Unless otherwise stated, we denote by $C$ a positive constant that depends only on the parameters of the system, the domain, and the boundary conditions. The value of $C$ may differ from line to line. When constants depend on the initial data, we explicitly indicate this fact. Also, when there is no cause for confusion, we write $A\lesssim B$ to mean $A\le CB$ for some constant $C$ with the aforementioned dependencies.

We denote by $L^p=L^p(\Omega)$ the Lebesgue spaces and by $W^{s,p}=W^{s,p}(\Omega)$, $H^s=H^s(\Omega)=W^{s,2}$ the Sobolev spaces. We denote by $(\cdot,\cdot)_{L^2}$ the $L^2(\Omega)$ inner product. We denote by $H_0^1=H_0^1(\Omega)$ the function space of $H^1(\Omega)$ functions with zero trace. We endow $H_0^1$ with the Dirichlet inner product, $(f,g)_{H_0^1}=\int_\Omega \na f\cdot\na g\,dx.$ We remark that for $f,g\in H^2\cap H_0^1$, we have $(f,g)_{H_0^1}=(f,-\D g)_{L^2}=(g,-\D f)_{L^2}.$

We define $H$ and $V$ to be the closures of $\{f\in (C_0^\infty(\Omega))^3\,|\,\div f=0\}$ in $(L^2(\Omega))^3$ and $(H^1(\Omega))^3$, respectively. We endow $H$ with the $L^2$ inner product and $V$ with the Dirichlet inner product, $( f,g)_V=\int_\Omega \na f:\na g\,dx.$ 

To avoid dealing directly with the pressure $p$, it is oftentimes convenient to do computations on the Stokes equations \eqref{stokes} projected onto the space of divergence free functions. We denote by $\mathbb{P}:(L^2(\Omega))^3\to H$ the Leray projection which is the orthogonal projection of $L^2(\Omega)^3$ onto $H$ (see \cite{cf} for more details). Then, applying $\mathbb{P}$ to \eqref{stokes}, we obtain
\be\la{Stokes}
\pa_t u+\nu Au=-K\mathbb{P}(\rho\na\Phi)
\ee
where $A=\mathbb{P}(-\D):\mathcal{D}(A)=(H^2)^3\cap V\to H$ is the Stokes operator whose inverse $A^{-1}$ is a self-adjoint, nonnegative compact operator on $H$. From elliptic regularity estimates for $A$ (see e.g. \cite{cf}), we also have that $A^{-1}$ is a self-adjoint, nonnegative compact operator on $V$. We remark that for $f,g\in \mathcal{D}(A)$, we have that $(f,g)_V=(f,Ag)_H=(g,Af)_H$. This follows from the self-adjointness of $\mathbb{P}$ and the fact that for $f,g\in \mathcal{D}(A)$, we have $\mathbb{P}f=f$ and $\mathbb{P}g=g$.

We define $\mathcal{H}=L^2\times L^2\times H$, $\mathcal{V}= H^1\times H^1\times V$, and $\mathcal{H}^2=H^2\times H^2\times \mathcal{D}(A)$ and endow them with the norms
\begin{align*}
\|(f,g,h)\|_\mathcal{H}^2=&\|f\|_{L^2}^2+\|g\|_{L^2}^2+\|h\|_{H}^2\\
\|(f,g,h)\|_\mathcal{V}^2=&\|f\|_{H^1}^2+\|g\|_{H^1}^2+\|h\|_{V}^2\\
\|(f,g,h)\|_{\mathcal{H}^2}^2=&\|f\|_{H^2}^2+\|h\|_{H^2}^2+\|Ah\|_{H}^2.
\end{align*}
We also denote $\mathcal{V}_0=H_0^1\times H_0^1\times V$ and endow the space with the norm
\be
\|(f,g,h)\|_{\mVo}^2=\|f\|_{H_0^1}^2+\|g\|_{H_0^1}^2+\|h\|_V^2,
\ee
and denote $\mathcal V^+=\{(f,g,h)\in \mV\,|\, f,g\ge 0\}\subset \mV$.

For functions of both the spatial variable $x$ and time $t$, we say that $f\in L^p(0,T;X)$ if the Banach space valued function $t\mapsto f(t,\cdot)\in X$ is in $L^p(0,T)$.

We denote $z_1=1, z_2=-1$ for the valences of the ionic species.

\section{Existence and Finite Dimensionality of Global Attractor}\la{FD}
The global existence and regularity of solutions of the NPS system \eqref{np}-\eqref{stokes} with Dirichlet boundary conditions \eqref{gamma}-\eqref{noslip} is established in \cite{cil}. In particular, the following theorem is proved.

\beg{thm}\la{twospecies} Let $T>0$ be arbitrary. Let initial conditions $c_i(\cdot, 0)\ge 0$, $c_i(\cdot, 0)\in H^1$ and $u_0\in V$, and smooth boundary conditions ${c_i}_{|\pa\Omega}=\gamma_i>0$ and $\Phi_{|\pa\Omega}=W$ be given. Then the NPS system \eqref{np}-\eqref{stokes} has a unique global strong solution on $[0,T]$. That is, there exist unique $0\le c_i\in L^\infty(0,T;H^1)\cap L^2(0,T;H^2), u\in L^\infty(0,T;V)\cap L^2(0,T ;\mathcal{D}(A))$ that solve \eqref{np}-\eqref{stokes} in the sense of distributions and satisfy the boundary conditions \eqref{gamma}-\eqref{noslip} in the sense of traces.
\end{thm}

Thus, for fixed, smooth boundary conditions ${c_i}_{|\pa\Omega}=\gamma_i>0$ and $\Phi_{|\pa\Omega}=W$, we define the \textit{solution map} $S(\cdot,\cdot)$, seen as a map from $[0,\infty)\times \mathcal{V}^+$ to $\mathcal{V}^+$, such that $(t,x)\mapsto S(t,w_0)(x)$ is the unique solution to the initial-boundary value problem \eqref{np}-\eqref{stokes} with initial conditions $w_0\in\mathcal{V}^+$ and satisfying the fixed boundary conditions. By some abuse of notation, we also denote $S(t)$ to be the map from $\mathcal{V}^+$ to itself that maps $w_0\in\mathcal{V}^+$ to $S(t,w_0)\in\mathcal{V}^+$. We note that due to the uniqueness of strong solutions to the NPS system, the map $S$ satisfies the semigroup property
\be
S(t+s)=S(t)S(s)=S(s)S(t),\quad t,s\ge 0.
\ee

The goal of this section is to show the existence and finite dimensionality of a global attractor associated with the NPS system.

\begin{defi}\la{GA}
We say that $\mathcal{A}\subset \mathcal{V}^+$ is a global attractor of the NPS system \eqref{np}-\eqref{stokes} with boundary conditions \eqref{gamma}-\eqref{noslip} if $\mA$ satisfies the following:
\begin{enumerate}
    \item $\mathcal{A}$ is compact in $\mathcal V$
    \item $\mathcal{A}$ is invariant under $S$ i.e. $S(t)\mathcal{A}=\mathcal{A}$ for all $t\ge 0$
    \item\la{maximal} $\mathcal{A}$ is maximal in the sense that if $\mathcal B\subset\mV^+$ is bounded in $\mathcal{V}$ and satisfies $S(t)\mathcal B=\mathcal B$ for all $t\ge 0$, then $\mathcal B\subset \mathcal A$
    \item \la{att}For all $v\in \mathcal{V}^+$, $\lim_{t\to\infty}d_\mathcal{V}(S(t)v,\mathcal{A})=0$, where $d_\mathcal{V}(S(t)v,\mathcal{A}):=\inf_{x\in\mathcal{A}}\|S(t)v-x\|_{\mathcal V}$
    \item $\mathcal{A}$ is connected in $\mathcal{V}.$
\end{enumerate}
\end{defi}

We establish the existence and finite dimensionality of a global attractor of the NPS system in two separate theorems.

\begin{thm}\la{globa}
The NPS system \eqref{np}-\eqref{stokes}  with boundary conditions \eqref{gamma}-\eqref{noslip} possesses a global attractor $\mathcal A$.
\end{thm}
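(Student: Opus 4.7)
The plan is to invoke the classical existence theorem for global attractors of a continuous semigroup on a Banach space (as in Temam): if $S(t)$ is continuous on $\mathcal{V}^+$ for every $t \ge 0$ and there exists a set $\mathcal{B}_0 \subset \mathcal{V}^+$ that is compact in $\mathcal{V}$ and absorbs every bounded subset of $\mathcal{V}^+$, then $\mathcal{A} := \omega(\mathcal{B}_0) = \bigcap_{s \ge 0} \overline{\bigcup_{t \ge s} S(t)\mathcal{B}_0}$ satisfies properties (1)--(4) of Definition \ref{GA} automatically, and connectedness (5) follows from the convexity (hence connectedness) of $\mathcal{V}^+$ combined with the continuity of $S(t)$. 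The proof therefore reduces to three ingredients: (i) an $\mathcal{H}$-absorbing ball furnished by the energy inequality; (ii) a bootstrap of this ball to a set that is absorbing in $\mathcal{V}$ and compactly embedded in $\mathcal{V}$; (iii) continuity of $S(t)\colon \mathcal{V}^+ \to \mathcal{V}^+$.

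For (i), I would quote Lemma \ref{l1} directly: the functional $\mathcal{F}$ of \eqref{mathcalf}, a carefully weighted combination of $\|c_i\|_{L^2}^2$, $\|u\|_{L^2}^2$, and $\|\rho\|_{H^{-1}}^2$, satisfies a coercive differential inequality of the form $\frac{d}{dt}\mathcal{F} + \kappa\mathcal{F} \le C$, yielding both a uniform-in-time absorbing bound in $\mathcal{H}$ and, upon time integration, $L^2_t$ dissipation control on $\nabla c_i$, $\nabla u$, and $\rho$. Combining this with the $L^\infty$ absorbing bounds on $c_1, c_2$ from Theorem \ref{maxthm}, which depend only on the boundary data, yields uniform pointwise control of the concentrations for $t$ beyond some $t_0$ depending on the initial datum. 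For (ii), I would subtract smooth lifts of the Dirichlet data $\gamma_i, W$ to reduce to homogeneous boundary conditions and then test the resulting Nernst--Planck equations with $-\Delta \tc_i$ and the projected Stokes equation with $Au$. The nonlinear terms $u\cdot\na c_i$, $\div(c_i\na\Phi)$, and $\mathbb{P}(\rho\na\Phi)$ are handled via the $L^\infty$ bounds on $c_i$, elliptic regularity for $-\ep\D\Phi = \rho$, and three-dimensional Sobolev embeddings. A uniform Gronwall argument, fed by the $L^2_t$ dissipation already in hand, upgrades the $\mathcal{H}$-absorbing ball to a $\mathcal{V}$-absorbing ball, and one further round of parabolic smoothing (e.g., testing with $\pa_t$-derivatives or reapplying uniform Gronwall on a higher-order energy) promotes this to an absorbing set in $\mathcal{H}^2 = H^2\times H^2\times\mathcal{D}(A)$, which embeds compactly into $\mathcal{V}$.

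For (iii), continuity of $S(t)$ on $\mathcal{V}^+$ follows from a standard difference estimate: if $w_j = S(\cdot)v_j$ for $j=1,2$, the difference $w_1 - w_2$ solves a linear system whose coefficients are controlled on $[0,T]$ by the bounds already established, and a Gronwall argument produces $\|S(t)v_1 - S(t)v_2\|_{\mathcal{V}} \le e^{C(T)}\|v_1 - v_2\|_{\mathcal{V}}$ for $t \in [0,T]$. The main obstacle I expect lies in closing the higher-regularity bootstrap in (ii) with \emph{uniform-in-time} constants rather than merely constants that blow up at finite time as in Theorem \ref{twospecies}. In three dimensions the forcing $\mathbb{P}(\rho\na\Phi)$ sits at the borderline of what the available dissipation can absorb, so the absorbing character must be extracted by tightly coupling the $L^\infty$ absorbing bounds on $c_i$ with the precise coefficient structure of $\mathcal{F}$, ensuring that the nonlinear contributions are dominated by the dissipative part of the differential inequality rather than accumulated over time.
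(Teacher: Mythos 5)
Your proposal is correct but takes a genuinely different route from the paper's proof. You invoke the classical attractor-existence theorem (Temam): a continuous semigroup with a compact absorbing set $\mathcal{B}_0$ admits $\mA = \omega(\mathcal{B}_0) = \bigcap_{s\ge 0}\overline{\bigcup_{t\ge s}S(t)\mathcal{B}_0}$ as the global attractor, with connectedness following from the convexity of the absorbing set and continuity of $S(t)$. The paper instead defines $\mA = \bigcap_{t>0}S(t)B_R^2$ and proves every property of Definition \ref{GA} from scratch. The crucial structural difference is that the paper's self-contained argument requires \emph{injectivity} of the solution map $S(t)$ (Proposition \ref{p1}(\ref{ab2}), proved via the Agmon--Nirenberg log-convexity method), which it uses both for the inclusion $\mA\subset S(t)\mA$ in the invariance proof and for the connectedness argument via simple curves; your $\omega$-limit route bypasses injectivity of $S(t)$ entirely, since the invariance of $\omega(\mathcal{B}_0)$ for a continuous semigroup is established by a direct diagonalization argument without backward uniqueness, and connectedness follows from the convexity/connectedness of $B_R^2$ together with a Hausdorff-convergence argument. (Note, though, that the paper needs an injectivity-type argument anyway in Proposition \ref{S'} for the linearized flow $S'(t,w_0)$ to prove finite fractal dimension, so the machinery is not wasted.) Your ingredients (i)--(iii) correspond precisely to the paper's Lemma \ref{l1} and Proposition \ref{p1}(\ref{ab}),(\ref{ab1}); one minor deviation is that you propose to couple the $L^\infty$ absorbing bounds of Theorem \ref{maxthm} into the $\mH^2$ bootstrap, whereas the paper's Steps 1--6 run entirely in $L^2$-based Sobolev spaces and reserve Theorem \ref{maxthm} for the electroneutrality result in Section \ref{EN}; both are workable. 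The one technicality worth making explicit if you flesh this out: to obtain a \emph{compact} absorbing set you should take the $\mV$-closure of $B_R^2$, which remains bounded in $\mH^2$ by weak compactness and is compact in $\mV$ by Rellich, and one must check that $\mV^+$ (and $\mV_\gamma^+$) is closed in $\mV$ so that the semigroup framework applies on a complete metric space — both hold since $H^1$ convergence preserves a.e. nonnegativity and traces.
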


\begin{thm}\la{fdf}
The global attractor $\mA$ has finite fractal (box-counting) dimension (c.f. \eqref{fd}) in $\mV$:
    \be 
    d_f(\mA)\le D<\infty
    \ee
    where $D$ depends only on parameters and boundary conditions.
\end{thm}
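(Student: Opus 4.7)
The plan is to apply the classical Constantin--Foias--Temam framework for bounding the fractal dimension of a compact invariant set by Lyapunov-type trace estimates on the linearized flow. The criterion I would verify is: if $S(t)$ is uniformly quasi-differentiable on $\mA$ with quasi-differential $L(t;w_0)$, and if for some $N\in\mathbb{N}$
\begin{equation*}
\langle q_N\rangle:=\limsup_{t\to\infty}\sup_{w_0\in\mA}\fr{1}{t}\int_0^t\sup_{P_N}\tr\bigl(\mathcal L(\tau;w_0)\circ P_N\bigr)\,d\tau<0,
\end{equation*}
where the inner supremum is over orthogonal projections $P_N:\mV\to\mV$ of rank $N$ and $\mathcal L=\dot L L^{-1}$, then $d_f(\mA)\le N$.

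First I would linearize and establish quasi-differentiability. For a background trajectory $(c_1,c_2,u)=S(\tau)w_0$ with $w_0\in\mA$ and perturbation $(v_1,v_2,w)\in\mVo$, let $\Psi$ be the associated perturbed potential defined by $-\ep\D\Psi=v_1-v_2$ in $\Omega$ with $\Psi_{|\pa\Omega}=0$. The linearized NPS system reads
\begin{align*}
\pa_t v_1+u\cdot\na v_1+w\cdot\na c_1&=D_1\div(\na v_1+v_1\na\Phi+c_1\na\Psi),\\
\pa_t v_2+u\cdot\na v_2+w\cdot\na c_2&=D_2\div(\na v_2-v_2\na\Phi-c_2\na\Psi),\\
\pa_t w+\nu Aw&=-K\mathbb{P}[(v_1-v_2)\na\Phi+\rho\na\Psi],
\end{align*}
subject to homogeneous Dirichlet/no-slip data. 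Using the uniform-in-time $\mV$ and higher-regularity bounds on background trajectories on $\mA$ (from the absorbing ball estimates of this section), standard parabolic energy estimates plus Gr\"onwall give global well-posedness of this linear system in $\mVo$. Applying the same machinery to the remainder $R(t):=S(t)(w_0+\delta)-S(t)w_0-L(t;w_0)\delta$ yields $\|R(t)\|_\mV\le C(t)\|\delta\|_\mV^2$ with $C(t)$ depending only on the attractor bounds, establishing uniform quasi-differentiability on $\mA$.

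Next I would estimate the trace. For an orthonormal family $\{\phi_j=(\phi_j^1,\phi_j^2,\phi_j^3)\}_{j=1}^N$ in $\mV$ (effectively in $\mVo$, after an arbitrarily short time), compute $\sum_{j=1}^N(\mathcal L(\tau;w_0)\phi_j,\phi_j)_\mV$. Integration by parts produces a dissipative sum bounded above by $-\sum_j\bigl(D_1\|\na\phi_j^1\|_{H^1}^2+D_2\|\na\phi_j^2\|_{H^1}^2+\nu\|A\phi_j^3\|_H^2\bigr)$ together with cross terms involving $\na c_i$, $\na\Phi$, $\na u$, $\rho$, and the linearized potentials $\Psi_j$. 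By Weyl's law $\lambda_j\sim j^{2/3}$ for the Dirichlet Laplacian and the Stokes operator, combined with the Lieb--Thirring inequality for the orthonormal family, the dissipative sum is at least $c N^{5/3}$ for some $c>0$. The cross terms are controlled, via H\"older and Sobolev embeddings together with the uniform bounds on $\|(c_1,c_2,u)\|_{\mathcal H^2}$ and $\|\Phi\|_{H^2}$ on $\mA$, by $CN$. Hence $\tr(\mathcal L(\tau;w_0)P_N)\le CN-cN^{5/3}$ uniformly in $\tau\ge 0$ and $w_0\in\mA$; choosing $N$ large enough makes $\langle q_N\rangle<0$ and yields $d_f(\mA)\le N=:D$, with $D$ depending only on parameters, the domain, and the boundary data.

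The main obstacle, I expect, is handling the nonlocal coupling through $\Psi$. Since $\Psi=\ep^{-1}(-\D_{\text{Dir}})^{-1}(v_1-v_2)$, the terms $c_i\na\Psi$ in the linearized concentration equations and $\rho\na\Psi$ in the velocity equation mix the three components of $\phi_j$ in a way that threatens the diagonal dissipative structure needed for Lieb--Thirring. The remedy is to exploit the charge-based antisymmetric cancellation already present in the nonlinear problem: the combination $(c_1\na\Psi,\na v_1)-(c_2\na\Psi,\na v_2)$ pairs with $K^{-1}(\rho\na\Psi,w)$ via integration by parts, in the same spirit as the derivation of the energy inequality of Lemma \ref{l1}, producing sign-controlled contributions. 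Choosing the correct weighted combination of inner products on $\mVo$, analogous to the coefficients in \eqref{mathcalf}, so that this cancellation survives at the trace level on arbitrary orthonormal families is the technical crux. The $\ep^{-1}$ factors from the Poisson equation enter only the constants $c,C$ above and do not affect the $N^{5/3}$ vs.\ $N$ competition, so finite-dimensionality holds for each fixed $\ep>0$.
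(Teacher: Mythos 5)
Your overall strategy — Constantin--Foias--Temam trace/volume-element estimates on the linearized flow, after first establishing quasi-differentiability of $S(t)$ on $\mA$ and compactness/injectivity of $S'(t,w_0)$ — is the route the paper takes (Propositions \ref{ctsdiff}, \ref{S'}, \ref{expd}, then Theorem~14.15 of \cite{cf}). The dimension count $cN^{5/3}$ versus $CN$ from the $j^{2/3}$ eigenvalue growth is also the paper's, so the structural skeleton of your argument is correct.

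Where you go astray is in your diagnosis of the "technical crux." You worry that the nonlocal coupling $\Psi=\ep^{-1}(-\D_D)^{-1}(v_1-v_2)$ threatens the diagonal dissipative structure, and propose to rescue it by building a weighted $\mVo$ inner product (mimicking the coefficients in \eqref{mathcalf}) so that the antisymmetric electro-mechanical cancellation survives at the trace level. The paper shows this is unnecessary — and the route you propose is in fact the harder one, since the nonlinear cancellation is a trilinear identity relating background and perturbation fields that does not pass through the projection onto an arbitrary $N$-dimensional subspace. The correct observation, which the paper uses, is that because the orthonormal family $\{\rw_j\}$ is orthonormal in the \emph{Dirichlet} topology of $\mVo$, one has $\sum_{j=1}^N(\sum_k\|\na\rc_{k,j}\|_{L^2}^2+\|\ru_j\|_V^2)=N$ for free; the Poisson-coupled terms are then controlled by elliptic regularity, Poincar\'e, and Young, with all $\ep^{-1}$ factors absorbed into the scalar coefficients $b_i(t), b_u(t)$ that enter the bound linearly in $N$. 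You in fact acknowledge this at the end ("the $\ep^{-1}$ factors enter only the constants"), so your "crux" paragraph contradicts your own closing remark. Also note that with orthonormality taken in $\mVo$ rather than in $\mH$ there is no need for the Lieb--Thirring inequality; the variational lower bound $\lambda_1+\cdots+\lambda_N\ge cN^{5/3}$ for the block operator $\bA$ is all that is used. Finally, a minor point: you invoke uniform pointwise $\mH^2$ bounds on $\mA$ to control the time-dependent trace coefficients, while the paper instead uses only the time-averaged integrability supplied by Lemma~\ref{l2!}, which lets Proposition~\ref{expd} hold for arbitrary $w_0\in\mV^+$; on the attractor the two are equivalent, but the time-averaged version is the weaker (and thus preferable) hypothesis.
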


From here on, for initial conditions $w_0=(c_1(0),c_2(0),u(0))\in \mV^+$, we denote
\be
w(t,x)=(c_1(t,x),c_2(t,x),u(t,x))=S(t,w_0)(x).
\ee

\subsection{Proof of Theorem \ref{globa}.}

To prove Theorem \ref{globa}, we first establish some properties of the solution map $S$.

\begin{prop}\la{p1}The solution map $S$ satisfies
\begin{enumerate}[(I)]
    \item\la{ab} (\textit{absorbing ball}) for initial conditions $w_0\in \mV^+$ there exist $T_0=T_0(\|w_0\|_\mathcal{H})>0$ and $R>0$ independent of $w_0$ such that for all $t\ge T_0$, $S(t,w_0)\in B^2_R=\{v\in\mathcal{V}^+\,|\,\|v\|_{\mathcal{H}^2}\le R\}$
    \item\la{ab1} (\textit{continuity}) for all $t\ge 0$, $S(t):\mathcal{V}^+\to\mathcal{V}^+$ is continuous in the $\mV$ topology
    \item\la{ab2} (\textit{injectivity}) for all $t\ge 0$, $S(t):\mathcal{V}^+\to\mathcal{V}^+$ is injective.
\end{enumerate}
\end{prop}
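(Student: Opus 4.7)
The plan is to establish the three properties one at a time, using Theorem \ref{twospecies} together with the energy inequality of Lemma \ref{l1} and the pointwise bounds of Theorem \ref{maxthm} as the baseline.

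For (I), the absorbing ball in $\mathcal{H}^2$ would be obtained by a two-step bootstrap. Lemma \ref{l1} already produces a ball in $\mathcal{H}$ absorbing bounded sets of $\mathcal{V}^+$ in finite time, together with an integral-in-time bound on the dissipation. To upgrade this to a $\mathcal{V}^+$ absorbing ball, I would introduce harmonic extensions $\overline\gamma_i$, $\overline W$ of the boundary data, test the Nernst-Planck equations for $c_i - \overline\gamma_i$ with $-\Delta(c_i - \overline\gamma_i)$, and test the projected Stokes equation \eqref{Stokes} with $Au$. The drift terms $c_i \nabla \Phi$ and the Lorentz force $\rho \nabla \Phi$ are then controlled using the uniform $L^\infty$ bounds on $c_i$ from Theorem \ref{maxthm}, elliptic regularity on \eqref{pois} (giving $\nabla \Phi$ in $W^{1,p}$ for any finite $p$), and Gagliardo--Nirenberg interpolation. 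A uniform Gr\"onwall lemma, applied using the already-known integrated dissipation, yields an absorbing ball in $\mathcal{V}$. A second analogous bootstrap, applying $A$ to the Stokes equation and differentiating once more in space for the concentrations, closes the estimate at the $\mathcal{H}^2$ level.

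For (II), given $w_0^{(1)}, w_0^{(2)} \in \mathcal{V}^+$ with corresponding solutions $w^{(j)}$ and potentials $\Phi^{(j)}$, I would write the difference system for $W = w^{(1)} - w^{(2)}$ and $\Psi = \Phi^{(1)} - \Phi^{(2)}$, which obeys homogeneous Dirichlet and no-slip boundary conditions since the boundary data cancel. Using the $L^\infty(0,T;\mathcal{V}^+)$ and $L^2(0,T;\mathcal{H}^2)$ bounds on each $w^{(j)}$ from Theorem \ref{twospecies} and Sobolev embeddings to handle the quadratic couplings, a direct energy estimate produces a differential inequality of the form $\tfrac{d}{dt}\|W\|_\mathcal{V}^2 \le M(t)\|W\|_\mathcal{V}^2$ with $M \in L^1(0,T)$, and Gr\"onwall delivers the $\mathcal{V}$-continuity of $S(t)$.

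The main obstacle is (III), for which I would invoke backward uniqueness. Suppose $S(t_0) w_0^{(1)} = S(t_0) w_0^{(2)}$ for some $t_0 > 0$ with $W = w^{(1)} - w^{(2)} \not\equiv 0$. Following the log-convexity method of Agmon--Nirenberg, adapted to parabolic systems by Lions--Malgrange and Ghidaglia, the plan is to show that on the open set where $W(t) \ne 0$, the function $\Lambda(t) := \log\|W(t)\|_\mathcal{H}^2$ satisfies $\Lambda''(t) \ge -K$ with $K$ determined by the uniform-in-time bounds on $w^{(j)}$ obtained in Part (I). This makes $\Lambda(t) + K t^2 / 2$ convex; combined with $\Lambda(t_0) = -\infty$ and $\Lambda(0) > -\infty$, convexity forces $\Lambda \equiv -\infty$, a contradiction. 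The delicate point is the computation of $\Lambda''$: it reduces to controlling the time derivative of the generalized Rayleigh quotient $(\mathcal{L} W, W)_\mathcal{H} / \|W\|_\mathcal{H}^2$, where $\mathcal{L}$ is the effective linearized NPS operator around $w^{(j)}$, and the quadratic coupling between Nernst-Planck, Poisson, and Stokes requires in particular $L^\infty$ control on $\nabla \Phi^{(j)}$, which is provided by the $\mathcal{H}^2$-absorbing ball of Part (I) via Sobolev embedding and elliptic regularity applied to \eqref{pois}.
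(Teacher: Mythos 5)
Your overall outline is sound for parts (II) and (III), and the key structural idea for (III) (Agmon--Nirenberg log-convexity) is exactly the paper's. But there are two substantive gaps, both in how you propose to obtain the quantitative bounds that feed the estimates.

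For part (I), your plan is to control the nonlinear terms via the $L^\infty$ bounds of Theorem \ref{maxthm}. The paper deliberately does not do this: the absorbing time in Theorem \ref{maxthm} depends on the initial data through the $L^\infty$ norms of $c_i(0)$, which in 3D are neither finite for general $w_0\in\mV^+$ nor controlled by $\|w_0\|_\mH$, whereas the statement of Proposition \ref{p1} requires $T_0=T_0(\|w_0\|_\mH)$. (This dependence is then used in Corollary \ref{brbr} and Remark \ref{monotone}.) The paper instead runs the bootstrap using only Lemma \ref{l1}: the $L^3$-in-space dissipation $\int\|\rho\|_{L^3}^3\,ds$ and the absorbing ball in $\mH$ are enough to close the $H^1$ estimate via a uniform Gr\"onwall argument, with $T_1$ depending only on $\mF(0)\lesssim \|w_0\|_\mH^2+1$. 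Your final step, ``applying $A$ to the Stokes equation and differentiating once more in space for the concentrations,'' also needs attention: on a bounded domain with Dirichlet data, higher spatial derivatives introduce boundary terms that do not vanish. The paper sidesteps this by estimating $\|\pa_t c_i\|_{L^2}$ and $\|\pa_t u\|_H$ (Steps 3--5), which have homogeneous Dirichlet/no-slip traces since the boundary data are time-independent, and then recovers $\|\D c_i\|_{L^2}$ and $\|Au\|_H$ from the equations (Step 6).

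For part (III), you claim $\Lambda''(t)\ge -K$ with a \emph{constant} $K$ ``determined by the uniform-in-time bounds obtained in Part (I).'' Those uniform bounds hold only for $t\ge T_0$; for $t\in[0,T_0]$ the solution can be large, and the relevant coefficients (e.g. $\|\na\bar\Phi\|_{L^\infty}^2$, $\|\na c_i\|_{L^4}^2$, $\|c_i\|_{L^\infty}^2$) are not uniformly bounded on $[0,t_0]$ for arbitrary $w_0\in\mV^+$. The paper handles this by proving an all-time integral bound (Lemma \ref{l2!}): $\int_0^t\|w(s)\|_{\mH^2}^2\,ds\le\mF_0+\mathcal G t$ with $\mF_0$ depending on initial data. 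This makes the coefficient functions $\alpha_i,\beta_i,\alpha_u$ locally integrable (not bounded) in time, and the Dirichlet quotient $E_1/E_0$ then obeys a Gr\"onwall inequality with time-dependent coefficients, yielding $E_1(t)/E_0(t)\le E(t)<\infty$ for each finite $t$. The convexity argument with constant $K$ would need either to be carried out only on $[T_0,\infty)$ (not enough, since injectivity must be proved from $t=0$) or to be replaced with the time-dependent Dirichlet-quotient estimate the paper uses. You would also need an analogue of Lemma \ref{l2!} to justify the continuity estimate in (II) quantitatively, since Theorem \ref{twospecies} asserts membership in the function spaces but does not by itself give Gr\"onwall-compatible bounds.
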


To simplify the proof of the proposition, it is helpful to invoke the following result, the first half of which is found in \cite{cil}:

\begin{lemma}\la{l1}
For the NPS system \eqref{np}-\eqref{stokes} with boundary conditions \eqref{gamma}-\eqref{noslip}, there exist constants $\delta, C_j>0$, $j=1,2,3,4$ depending only on parameters and boundary conditions such that the energy
\be
\mathcal{F}=\fr{1}{2K}\|u\|_H^2+\mathcal{P}+\delta\sum_{i=1}^2\|c_i\|_{L^2}^2\la{mathcalf}
\ee
satisfies
\be
\fr{d}{dt}\mathcal{F}+C_1\|u\|_V^2+C_2\sum_{i=1}^2\|\na c_i\|_{L^2}^2+C_3\|\rho\|_{L^3}^3\le C_4\la{Fineq}
\ee
where
\be
\mathcal{P}=\fr{1}{2\epsilon}\int_\Omega \rho(-\D_D)^{-1}\rho\,dx\ge 0
\ee
and $-\D_D$ is the homogeneous Dirichlet Laplace operator on $\Omega$.

In addition, for constants $C_i$, $i=5,6,7$ depending only on parameters and boundary conditions, we have for all $t,\tau\ge 0$,
\be\la{FB}
\mathcal{F}(t+\tau)\le \mathcal{F}(t)e^{-C_5\tau}+C_6\le \mathcal{F}(t)+C_6
\ee
and
\be\la{fint}
\int_{t}^{t+\tau}\left(\|u(s)\|_V^2+\sum_{i=1}^2\|\na c_i(s)\|_{L^2}^2+\|\rho(s)\|_{L^3}^3\right)\,ds\le C_7(\mathcal{F}(t)+\tau).
\ee
In particular, there exist $R_0>0$ depending only on parameters and boundary conditions and $T_1=T_1(\|w_0\|_\mathcal{H})>0$ such that
\be
\mathcal{F}(t)\le R_0,\quad \forall t\ge T_1.\la{qball}
\ee\la{balllem}
\end{lemma}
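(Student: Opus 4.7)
\medskip

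\noindent\textbf{Plan of proof.} The first half of the lemma, namely the differential inequality \eqref{Fineq}, is taken from \cite{cil}; I assume it. The remaining task is to derive from \eqref{Fineq} the two consequences \eqref{FB}--\eqref{fint} and the absorbing-ball bound \eqref{qball}. The core of the argument is to show that $\mathcal F$ is itself controlled, up to an additive constant, by the dissipation appearing on the left of \eqref{Fineq}, so that \eqref{Fineq} becomes a standard Gronwall-type inequality.

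\medskip

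\noindent\textbf{Step 1: bounding $\mathcal F$ by the dissipation.} I will show that there is a constant $C_*>0$, depending only on parameters and boundary data, such that
\begin{equation*}
C_*\,\mathcal F \;\le\; C_1\|u\|_V^2 + C_2\sum_{i=1}^2\|\nabla c_i\|_{L^2}^2 + C_3\|\rho\|_{L^3}^3 + C_*' .
\end{equation*}
For the kinetic term, $u\in V\subset (H_0^1)^3$, so Poincar\'e gives $\|u\|_H^2\lesssim \|u\|_V^2$. For the concentration term, I fix once and for all smooth extensions $\bar c_i$ of the boundary data $\gamma_i$ and write $c_i=\bar c_i+(c_i-\bar c_i)$ with $c_i-\bar c_i\in H_0^1$, so that Poincar\'e yields $\|c_i\|_{L^2}^2\lesssim \|\nabla c_i\|_{L^2}^2+1$. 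For $\mathcal P$, set $\psi=(-\Delta_D)^{-1}\rho\in H_0^1$, so that $\mathcal P=\frac{1}{2\ep}\|\nabla\psi\|_{L^2}^2$. Using $\|\nabla\psi\|_{L^2}^2=\int\rho\,\psi$ together with the Sobolev embedding $H_0^1\hookrightarrow L^6$ gives $\|\nabla\psi\|_{L^2}\lesssim \|\rho\|_{L^{6/5}}$; by H\"older on the bounded domain $\Omega$, $\|\rho\|_{L^{6/5}}^2\lesssim \|\rho\|_{L^3}^2$; and Young's inequality $x^2\le \tfrac{2}{3}x^3+\tfrac{1}{3}$ for $x\ge 0$ gives $\|\rho\|_{L^3}^2\lesssim \|\rho\|_{L^3}^3+1$. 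Combining these estimates yields the claim.

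\medskip

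\noindent\textbf{Step 2: the decay bound \eqref{FB} and absorbing bound \eqref{qball}.} Substituting the Step 1 bound into \eqref{Fineq} gives a differential inequality of the form
\begin{equation*}
\frac{d}{dt}\mathcal F(t)+C_5\,\mathcal F(t)\;\le\; C_4+C_*'
\end{equation*}
for some $C_5>0$ depending only on parameters and boundary data. Integrating via Gronwall on $[t,t+\tau]$ produces
\begin{equation*}
\mathcal F(t+\tau)\le \mathcal F(t)e^{-C_5\tau}+C_6,
\end{equation*}
which is \eqref{FB}. Evaluating at $t=0$, with $\mathcal F(0)$ bounded by a constant times $\|w_0\|_\mathcal H^2+1$, one obtains the existence of $T_1=T_1(\|w_0\|_\mathcal H)$ such that $\mathcal F(t)\le 2C_6=:R_0$ for all $t\ge T_1$, which is \eqref{qball}.

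\medskip

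\noindent\textbf{Step 3: the integral bound \eqref{fint}.} Direct integration of \eqref{Fineq} on $[t,t+\tau]$ yields
\begin{equation*}
C_1\int_t^{t+\tau}\!\!\|u\|_V^2ds+C_2\sum_i\int_t^{t+\tau}\!\!\|\nabla c_i\|_{L^2}^2ds+C_3\int_t^{t+\tau}\!\!\|\rho\|_{L^3}^3ds\le \mathcal F(t)-\mathcal F(t+\tau)+C_4\tau,
\end{equation*}
and discarding $-\mathcal F(t+\tau)\le 0$ gives \eqref{fint} with $C_7=\max(C_1,C_2,C_3)^{-1}\max(1,C_4)$.

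\medskip

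\noindent\textbf{Main obstacle.} The delicate point is Step 1, specifically the control of $\mathcal P$ by the cubic dissipation $\|\rho\|_{L^3}^3$. The coefficient $1/\ep$ in $\mathcal P$ is absorbed into constants that depend on $\ep$ (so all the $C_j$ here are allowed to depend on $\ep$), but the nonlinear interpolation from $\|\rho\|_{H^{-1}}^2$ through $\|\rho\|_{L^{6/5}}^2$ and $\|\rho\|_{L^3}^2$ to $\|\rho\|_{L^3}^3$ is exactly what forces the chain of Sobolev, H\"older and Young steps above. Once this is in place, \eqref{FB}--\eqref{qball} and \eqref{fint} follow by essentially automatic Gronwall arguments.
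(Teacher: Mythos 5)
Your proof is correct and follows essentially the same route as the paper's one-paragraph argument: take \eqref{Fineq} from \cite{cil}, establish a Poincar\'e-type lower bound on the dissipation by $\mathcal F$ up to an additive constant, apply Gr\"onwall for \eqref{FB} and \eqref{qball}, and integrate \eqref{Fineq} directly for \eqref{fint}. The only superficial difference is how you bound $\mathcal P$: the paper's stated bound $\mathcal F\lesssim \sum_i\|\nabla c_i\|_{L^2}^2+\|u\|_V^2+1$ implicitly routes $\mathcal P$ through $\|\rho\|_{L^2}^2\lesssim\sum_i\|c_i\|_{L^2}^2\lesssim\sum_i\|\nabla c_i\|_{L^2}^2+1$, whereas you route it through $\|\rho\|_{L^3}^3+1$; both are valid since all three dissipation terms are available on the left of \eqref{Fineq}. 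One small typo in Step 3: the constant should be $\min(C_1,C_2,C_3)^{-1}\max(1,C_4)$, not $\max(C_1,C_2,C_3)^{-1}\max(1,C_4)$, since you are dividing by the smallest of the three coefficients; this does not affect the argument.
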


\begin{proof}
The proof of \eqref{Fineq} may be found in \cite{cil}. Next, \eqref{FB} follows from \eqref{Fineq} by using the Poincaré-type bound $\mathcal{F}\lesssim\sum_i\|\na c_i\|_{L^2}^2+\|u\|_V^2+1$ and applying Grönwall's inequality; \eqref{fint} follows from integrating \eqref{Fineq} in time. Lastly, \eqref{qball} follows from the first inequality of \eqref{FB} by taking $t=0$ and taking $\tau$ sufficiently large.
\end{proof}

\begin{rem}
We remark that for \eqref{qball}, the time $T_1$ is obtained from \eqref{FB} and requiring that $\mathcal{F}(0)e^{-C_5 T_1}+C_6\le R_0$. Thus, without loss of generality, $T_1$ may be chosen to be a monotone increasing function of $\mathcal{F}(0)$. Also, the proof of Proposition \ref{p1} \eqref{ab} reveals that we may take $T_0=T_1+2$, and so $T_0$ may also be chosen to be a monotone increasing function of $\mF(0)$. \la{monotone}
\end{rem}

We now prove Proposition \ref{p1}.

\begin{proof}
We begin with \eqref{ab}.

\textbf{Step 1. Absorbing ball for $\|u\|_V$.} With $T_1>0$ determined by Lemma \ref{balllem}, we fix $t\ge T_2=T_1+1$. Then, from \eqref{fint} we find
\be
\int_{t-1}^{t}\left(\|u(s)\|_V^2+\sum_{i=1}^2\|\na c_i(s)\|_{L^2}^2+\|\rho(s)\|_{L^3}^3\right)\,ds\lesssim \mathcal{F}(t-1)+1.
\ee
Since $t-1\ge T_1$, we have that by Lemma \ref{balllem}, $\mathcal{F}(t-1)\le R_0$ where $R_0$, from \eqref{qball}, depends only on parameters and boundary data. Since $t\ge T_2$ was arbitrary, we have shown
\be
\int_{t-1}^t\left(\|u(s)\|_V^2+\sum_{i=1}^2\|\na c_i(s)\|_{L^2}^2+\|\rho(s)\|_{L^3}^3\right)\,ds\le R_1,\quad t\ge T_2\la{Ru'}
\ee
for $R_1$ depending only on parameters and boundary data. 

Then, we multiply \eqref{Stokes} by $Au$, integrate by parts, and apply Young's inequality to obtain
\be
\fr{d}{dt}\|u\|_V^2+\nu\|Au\|_H^2\lesssim\|\rho\|_{L^3}^2\|\na\Phi\|_{L^6}^2\lesssim (\|\rho\|_{L^3}^3+1)(\|\rho\|_{L^2}^2+1)\la{ddtu}
\ee
where in the second inequality, we used \eqref{pois} and the embedding $H^1\hookrightarrow L^6.$ By \eqref{qball}, we have
\begin{align}
\|\rho(t)\|_{L^2}^2\le& R_2,\quad t\ge T_1=T_2-1\la{Rr}
\end{align}
for $R_2$ depending only on parameters and boundary data. Now, we fix $t\ge T_2$ and then fix $t_0\in (t-1,t)$ such that $\|u(t_0)\|_V^2\le R_1$. The existence of such a $t_0$ is guaranteed by \eqref{Ru'}. Then, integrating \eqref{ddtu} from $t_0$ to $t$ and using \eqref{Ru'} again, we find
\be
\|u(t)\|_V^2\le \|u(t_0)\|_V^2+C(R_2+1)\int_{t_0}^t\|\rho(s)\|_{L^3}^3+1\,ds\le R_1+C(R_2+1)(R_1+1).
\ee
Thus we have shown
\be
\|u(t)\|_V^2\le R_3=  R_1+C(R_2+1)(R_1+1),\quad t\ge T_2\la{Ru}
\ee
where $R_3$ is a constant depending only on parameters and boundary data.

\textbf{Step 2. Absorbing ball for $\|c_i\|_{H^1}.$}
Multiplying \eqref{np} by $-\D c_i$ and integrating by parts and using Hölder inequalities, we obtain
\be
\bal
&\fr{1}{2}\fr{d}{dt}\|\na c_i\|_{L^2}^2+D_i\|\D c_i\|_{L^2}^2\\
\lesssim&(\|u\|_{L^6}\|\na c_i\|_{L^3}+\|\na c_i\|_{L^3}\|\na \Phi\|_{L^6}+\|c_i\|_{L^6}\|\rho\|_{L^3})\|\D c_i\|_{L^2}.\la{ddtq}
\eal
\ee
Combining elliptic, Sobolev, and interpolation inequalities (interpolating $L^3$ between $L^2$ and $H^1$) with Young's inequalities, we obtain from \eqref{ddtq}
\be
\bal
\fr{d}{dt}\|\na c_i\|_{L^2}^2+D_i\|\D c_i\|_{L^2}^2\lesssim\left(1+\|u\|_V^4+\|\rho\|_{L^2}^4+\|\rho\|_{L^3}^3\right)\|\na c_i\|_{L^2}^2.\la{ddtq2}
\eal
\ee
Fixing $t\ge T_2$, we use \eqref{Ru'} to find $t_0\in (t-1,t)$ such that 
\be
\|\na c_i(t_0)\|_{L^2}^2\le R_4\la{Rq'}
\ee
for some $R_4$ depending only on parameters and boundary data. Then applying a Grönwall inequality to \eqref{ddtq2} and using \eqref{Ru'}, \eqref{Ru} and \eqref{Rq'} together with Lemma \ref{balllem}, we find
\be
\bal
\|\na c_i(t)\|_{L^2}^2\le& \|\na c_i(t_0)\|_{L^2}^2\exp\left(C\int_{t_0}^t(1+\|u\|_V^4+\|\rho\|_{L^2}^4+\|\rho\|_{L^3}^3)\,ds\right)\\
\le& R_4\exp(C(1+R_3^2+R_0^2+R_1)).
\eal
\ee
Thus we have shown
\be
\|\na c_i(t)\|_{L^2}^2\le R_5=R_4\exp(C(1+R_3^2+R_0^2+R_1)),\quad t\ge T_2, \quad i=1,2\la{Rq}
\ee
where $R_5$ ultimately depends only on parameters and boundary data.

\textbf{Step 3. Uniform local time integrability of $\|\pa_t u\|_H^2$.}  Taking the inner product of \eqref{Stokes} with $\pa_t u$, integrating by parts and applying Young's inequality, we obtain
\be
\fr{d}{dt}\fr{\nu}{2}\|u\|_V^2+\fr{1}{2}\|\pa_tu\|_H^2\lesssim \|\rho\na\Phi\|_{L^2}^2\le\|\rho\|_{L^3}^2\|\na\Phi\|_{L^6}^2.\la{ddtnu}
\ee
By \eqref{Rq} and Sobolev embeddings, we see that the rightmost term in \eqref{ddtnu} is uniformly bounded on $[T_2,\infty)$ by a constant depending only on parameters and boundary data. Using this fact, together with \eqref{Ru}, we integrate \eqref{ddtnu} to find, 
\be
\bal
\int_{t-1}^t\|\pa_s u(s)\|_H^2\,ds\le& \nu\|u(t-1)\|_V^2+C\int_{t-1}^t\|\rho(s)\|_{L^3}^2\|\na\Phi(s)\|_{L^6}^2\,ds\\
\le&\nu R_3+C(R_5^2+1)\\
=&R_6\quad  \text{ for all } t\ge T_3=T_2+1\la{bRu}
\eal
\ee
where $R_6$ depends only on parameters and boundary data.

\textbf{Step 4. Uniform local time integrability of $\|\pa_t c_i\|_{L^2}^2$.} Multiplying \eqref{np} by $\pa_t c_i$, integrating by parts and applying Young's inequalities, we obtain
\be
\bal
\fr{d}{dt}\fr{D_i}{2}\|\na c_i\|_{L^2}^2+\fr{1}{2}\|\pa_t c_i\|_{L^2}^2\lesssim(\|u\cdot\na c_i\|_{L^2}^2+\|\na c_i\cdot\na\Phi\|_{L^2}^2+\|c_i\rho\|_{L^2}^2).\la{ddtc}
\eal
\ee
Then, we note that from \eqref{Stokes} and from elliptic and Stokes regularity estimates \cite{cf}, we have
\be
\|u\|_{L^\infty}\lesssim\|u\|_{H^2}\lesssim\|Au\|_{L^2}\lesssim(\|\pa_t u\|_H+\|\rho\na\Phi\|_{L^2}).\la{ui}
\ee
Thus from \eqref{ddtc}, we obtain
\be
\bal
\fr{d}{dt}\fr{D_i}{2}\|\na c_i\|_{L^2}^2+\fr{1}{2}\|\pa_t c_i\|_{L^2}^2\lesssim& (\|u\|_{L^\infty}^2\|\na c_i\|_{L^2}^2+\|\na c_i\|_{L^2}^2\|\na\Phi\|_{L^\infty}^2+\|c_i\|_{L^4}^2\|\rho\|_{L^4}^2)\\
\lesssim&R_5\|\pa_t u\|_H^2+R_5^2+R_5^3+1\\
=&R_7\|\pa_t u\|_H^2+R_8,\quad t\ge T_2\la{Rc}
\eal
\ee
where in the second line, we used \eqref{ui} and the fact that all the terms that occur, other than $\|\pa_t u\|_{L^2}^2$, are uniformly bounded on $[T_2,\infty)$ in terms of $R_5$ from \eqref{Rq}. The constants $R_7$ and $R_8$ also depend only on parameters and boundary conditions. Next, we integrate \eqref{Rc} to find
\be
\bal
\int_{t-1}^t\|\pa_s c_i(s)\|_{L^2}^2\,ds\le& D_i\|\na c_i(t-1)\|_{L^2}^2+2\int_{t-1}^t(R_7\|\pa_s u(s)\|_{H}^2+R_8)\,ds\\
\lesssim& R_5+R_6R_7+R_8\\
=&R_9,\quad t\ge T_3,\quad i=1,2\la{bRc}
\eal
\ee
where in the second line we used \eqref{bRu}.

\textbf{Step 5. Absorbing ball for $\|\pa_t c_i\|_{L^2}$ and $\|\pa_t u\|_{L^2}$.} Differentiating \eqref{Stokes} in time, taking the inner product of the resulting equation with $\pa_t u$,  integrating by parts, and using the elliptic estimate 
\be\la{ntp}
\|\na\pa_t\Phi\|_{L^6}\lesssim\|\pa_t c_1\|_{L^2}+\|\pa_t c_2\|_{L^2}
\ee
we obtain
\be
\fr{1}{2}\fr{d}{dt}\|\pa_t u\|_H^2+\fr{\nu}{2}\|\pa_t u\|_V^2\lesssim(\|\pa_t c_1\|_{L^2}^2+\|\pa_t c_2\|_{L^2}^2)(\|\na\Phi\|_{L^\infty}^2+\|\rho\|_{L^3}^2).\la{ptu1}
\ee
Restricting the above estimate to $t\ge T_2$ and using \eqref{Rq}, we obtain
\be
\fr{d}{dt}\|\pa_t u\|_H^2\lesssim (R_5+1)(\|\pa_t c_1\|_{L^2}^2+\|\pa_t c_2\|_{L^2}^2),\quad t\ge T_2.\la{ptu}
\ee
Similarly, differentiating \eqref{np} in time, multiplying the resulting equation by $\pa_t c_i$, integrating by parts, and using
\be
\int_\Omega (u\cdot\na\pa_t c_i)\pa_tc_i\,dx=0
\ee
we obtain, again using \eqref{ntp},
\be
\fr{1}{2}\fr{d}{dt}\|\pa_t c_i\|_{L^2}^2+\fr{D_i}{2}\|\na\pa_tc_i\|_{L^2}^2\lesssim \|\pa_t u\|_{H}^2\|c_i\|_{L^\infty}^2+(\|\pa_t c_1\|_{L^2}^2+\|\pa_t c_2\|_{L^2}^2)(\|\na\Phi\|_{L^\infty}^2+\|c_i\|_{L^3}^2).\la{ptc1}
\ee
From (\ref{np}), we deduce
\be\la{Dc}
\bal 
\|\D c_i\|_{L^2}\lesssim& \|\pa_t c_i\|_{L^2}+\|u\|_{L^\infty}\|\na c_i\|_{L^2}+\|\na c_i\|_{L^2}\|\na \Phi\|_{L^\infty}+\|c_i\|_{L^4}\|\rho\|_{L^4}\\
\lesssim&\|\pa_t c_i\|_{L^2}+(\|\pa_t u\|_H+\|\rho\na\Phi\|_{L^2})\|\na c_i\|_{L^2} +\|\na c_i\|_{L^2}\|\na \Phi\|_{L^\infty}+\|c_i\|_{L^4}\|\rho\|_{L^4}\\
\lesssim& \|\pa_t c_i\|_{L^2}+\|\pa_t u\|_H+1, \quad t\ge T_2
\eal
\ee
where in the second line, we used \eqref{ui}, and in the last line, we used the fact that all the terms, other than $\|\pa_t c_i\|_{L^2}$ and $\|\pa_t u\|_H$, are controlled by \eqref{Rq}, for times $t\ge T_2$.

Then, from the embedding $H^2\hookrightarrow L^\infty$, we have that
\be
\|c_i\|_{L^\infty}\lesssim\|\D c_i\|_{L^2}+1\le R_{10}(\|\pa_t c_i\|_{L^2}+\|\pa_t u\|_H+1),\quad t\ge T_2\la{R'}
\ee
where $R_{10}$ depends only on parameters and boundary conditions. Thus, returning to \eqref{ptc1}, we obtain
\be\la{ptc}
\bal
\fr{d}{dt}\|\pa_t c_i\|_{L^2}^2\lesssim& R_{10}^2\|\pa_t u\|_H^2(\|\pa_t c_i\|_{L^2}^2 +\|\pa_t u\|_H^2)\\
&+(\|\pa_t c_1\|_{L^2}^2+\|\pa_t c_2\|_{L^2}^2+\|\pa_t u\|_H^2)(R_5+R_{10}^2+1),\quad t\ge T_2
\eal
\ee
where again we used \eqref{Rq} to bound $\|\na\Phi\|_{L^\infty}^2,\|c_i\|_{L^3}^2$. Finally, adding \eqref{ptu} to \eqref{ptc} summed in $i$, we have
\be
\bal
\fr{d}{dt}\left(\|\pa_t c_1\|_{L^2}^2+\|\pa_t c_2\|_{L^2}^2+\|\pa_t u\|_H^2\right)\le& R_{11}(\|\pa_t u\|_H^2+1)\left(\|\pa_t c_1\|_{L^2}^2+\|\pa_t c_2\|_{L^2}^2+\|\pa_t u\|_H^2\right),\quad t\ge T_2\la{this}
\eal
\ee
where $R_{11}$ depends only on parameters and boundary conditions. Now we fix $t\ge T_3=T_2+1$. Then by \eqref{bRu} and \eqref{bRc}, there exists a time $t_0\in(t-1,t)$ such that
\be
\|\pa_t c_1(t_0)\|_{L^2}^2+\|\pa_t c_2(t_0)\|_{L^2}^2+\|\pa_t u(t_0)\|_H^2\le  R_{12}= R_6+2R_9\la{bR}
\ee
where $R_{12}$ depends only on parameters and boundary conditions. Thus, integrating \eqref{this} using \eqref{bRu} and \eqref{bR}, we obtain
\be
\bal
&\|\pa_t c_1(t)\|_{L^2}^2+\|\pa_t c_2(t)\|_{L^2}^2+\|\pa_t u(t)\|_H^2\\
\le&\left(\|\pa_t c_1(t_0)\|_{L^2}^2+\|\pa_t c_2(t_0)\|_{L^2}^2+\|\pa_t u(t_0)\|_H^2\right)\exp\left(R_{11}\int_{t_0}^t\|\pa_su(s)\|_H^2+1\,ds\right)\\
\le& R_{13}=R_{12}e^{R_{11}(R_{6}+1)},\quad t\ge T_3=T_2+1.\la{ptcu}
\eal
\ee

\textbf{Step 6. Absorbing ball for $\|c_i\|_{H^2}$ and $\|u\|_{H^2}$.} Finally, since by \eqref{ui}, \eqref{Rq} and \eqref{Dc} we have, 
\be
\bal
\|Au\|_{H}+\|\D c_1\|_{L^2}+\|\D c_2\|_{L^2}\lesssim&\|\pa_t u\|_H+\|\pa_t c_1\|_{L^2}+\|\pa_t c_2\|_{L^2}+1,\quad t\ge T_2,
\eal
\ee
it follows from \eqref{ptcu} that
\be
\|Au\|_{H}+\|\D c_1\|_{L^2}+\|\D c_2\|_{L^2}\le R^*,\quad t\ge T_3
\ee
for some constant $R^*$ depending only on parameters and boundary conditions. We recall that $T_3=T_2+1=T_1+2$, where $T_1$ is from Lemma \ref{balllem}. Thus $T_3$ depends only on the $L^2$ norms of the initial data, $\|w_0\|_{\mathcal H}$. We take $T_0$ from the statement of the proposition to be equal to $T_3$. This completes the proof of \eqref{ab}.

We emphasize that \eqref{ab} establishes \textit{long time} bounds that are \textit{independent of initial conditions}. However, for the proof of \eqref{ab1} (and of later results), it is helpful to have quantitative bounds on NPS solutions that hold for \textit{all} time. Such bounds are obtainable at the expense of introducing dependencies on initial data, as we show in the following lemma:
\begin{lemma}\la{l2!}
    Let $w=(c_1,c_2,u)$ be a strong solution of NPS \eqref{np}-\eqref{stokes} with boundary conditions \eqref{gamma}-\eqref{noslip}. Then there exists a constant $\mathcal G$ dependingly only on parameters and boundary conditions and a constant $\mF_0$ depending additionally on initial conditions (specifically $\|w_0\|_\mV$) such that for all $t\ge 0$,
    \be
    \int_0^t\|w(s)\|_{\mH^2}^2\le \mF_0+\mathcal G t.
    \ee
\end{lemma}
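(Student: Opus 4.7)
The plan is to use the $\mH^2$ absorbing ball of Proposition \ref{p1}\eqref{ab} to split the integral $\int_0^t \|w(s)\|_{\mH^2}^2\,ds$ into a transient part on $[0, T_0]$ (bounded in terms of initial data) and a long-time tail on $[T_0, t]$ on which the integrand is bounded pointwise by a constant depending only on parameters and boundary conditions. The constant $\mathcal G$ in the lemma will come from the squared radius of the absorbing ball, while $\mF_0$ absorbs the transient piece.

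Specifically, Proposition \ref{p1}\eqref{ab} yields a time $T_0 = T_0(\|w_0\|_\mH)$ and a radius $R$, depending only on parameters and boundary conditions, such that $\|w(s)\|_{\mH^2} \le R$ for all $s \ge T_0$. This gives immediately the tail estimate $\int_{T_0}^{t} \|w(s)\|_{\mH^2}^2\,ds \le R^2(t - T_0) \le R^2 t$ for $t \ge T_0$, so we may set $\mathcal G := R^2$. For the transient integral $\int_0^{T_0} \|w(s)\|_{\mH^2}^2\,ds$ we need a finite bound depending on $\|w_0\|_\mV$ and on $T_0$ (itself a function of $\|w_0\|_\mH$, and hence of $\|w_0\|_\mV$). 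I would derive this by integrating the differential $H^2$ inequalities \eqref{ddtu} and \eqref{ddtq2} directly on $[0, T_0]$: the bound $\mathcal F(s) \le \mathcal F(0) + C_6$ from \eqref{FB} controls $\|\rho(s)\|_{L^2}^2$ pointwise on this interval in terms of $\|w_0\|_\mH$, the time integral $\int_0^{T_0} \|\rho(s)\|_{L^3}^3\,ds$ is controlled by \eqref{fint}, and a Grönwall argument applied to \eqref{ddtq2}, starting from $\|\nabla c_i(0)\|_{L^2}^2 \le \|w_0\|_\mV^2$, furnishes pointwise control of $\|\nabla c_i(s)\|_{L^2}^2$ on $[0, T_0]$. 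Integrating \eqref{ddtu} and \eqref{ddtq2} in $s$ then produces a finite $\mF_0 = \mF_0(\|w_0\|_\mV)$ bounding both $\int_0^{T_0}\|\Delta c_i\|_{L^2}^2\,ds$ and $\int_0^{T_0}\|Au(s)\|_H^2\,ds$.

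Combining the two pieces, for $t \ge T_0$ we obtain $\int_0^t \|w(s)\|_{\mH^2}^2\,ds \le \mF_0 + R^2 t$, while for $t < T_0$ the same inequality is immediate from $\int_0^t \le \int_0^{T_0} \le \mF_0 \le \mF_0 + \mathcal G t$. No serious obstacle is anticipated: the main bookkeeping is tracking the explicit dependence of $\mF_0$ on $\|w_0\|_\mV$ through the Grönwall step on $[0, T_0]$, but since this interval is finite and all the needed differential inequalities are already established in the proof of Proposition \ref{p1}, this reduces to a routine computation rather than a genuine difficulty.
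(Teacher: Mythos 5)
Your proposal is correct and follows essentially the same route as the paper's proof: split at $T_0$, bound the tail by $R^2 t$ using the $\mH^2$ absorbing ball, and bound the transient on $[0,T_0]$ by integrating \eqref{ddtu} and \eqref{ddtq2} with \eqref{FB}, \eqref{fint}, and Grönwall. The only presentational difference is that the paper first extracts the pointwise bound $\|u(t)\|_V^2 \le \tilde C_0$ on $[0,T_0]$ from integrating \eqref{ddtu} before applying Grönwall to \eqref{ddtq2}, a step you leave implicit; otherwise the arguments coincide.
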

\begin{proof}
Taking time $T_0$ as determined by part \eqref{ab} of Proposition \ref{p1}, we have for all $t\ge T_0$,
\be\la{dlrj}
\int_{T_0}^t\|w(s)\|_{\mH^2}^2\,ds\le R^2(t-T_0)\le R^2 t.
\ee
Now consider $0\le t<T_0$. Then integrating \eqref{ddtu} and using \eqref{FB} and \eqref{fint}, we obtain
\be\la{dlrjteh}
\bal
\|u(t)\|_V^2+\nu\int_0^t\|Au(s)\|_H^2\,ds\le& \|u(0)\|_V^2+C(\mathcal F(0)+1)(\mathcal{F}(0)+t)\\
\le& \|u(0)\|_V^2+C(\mathcal F(0)+1)(\mathcal{F}(0)+T_0)={\tilde C}_0 \text{ for all } 0\le t<T_0.
\eal
\ee
Now integrating \eqref{ddtq2} using \eqref{FB}, \eqref{fint} and \eqref{dlrjteh}, we have for $i=1,2$ and for all $0\le t<T_0$,
\be
\bal
D_i\int_0^t\|\D c_i(s)\|_{L^2}^2\,ds\le& \|\na c_i(0)\|_{L^2}^2\exp\left(C(t+{\tilde C}_0^2+(\mathcal{F}(0)+1)^2t+(\mathcal{F}(0)+t))\right)\\
\le&\|\na c_i(0)\|_{L^2}^2\exp\left(C(T_0+{\tilde C}_0^2+(\mathcal{F}(0)+1)^2T_0+(\mathcal{F}(0)+T_0))\right).
\eal
\ee
Combining this inequality with \eqref{dlrjteh} and \eqref{dlrj} gives us the conclusion of the lemma.
\end{proof}

Now we prove part \eqref{ab1} of Proposition \ref{p1}. Suppose $w=(c_1,c_2,u)$ and $\bar w=(\bar c_1,\bar c_2,\bar u)$ are two solutions of NPS with boundary conditions \eqref{gamma}-\eqref{noslip}. Then $\tilde w=\bar w- w=(\bar c_1- c_1,\bar c_2-c_2,\bar u- u)$ satisfies
\begin{align}
    \pa_t \tilde c_i-D_i\D\tilde c_i=&D_iz_i\na\cdot(c_i\na\tilde\Phi+\tilde c_i\na\bar\Phi)-u\cdot\na\tilde c_i-\tilde u\cdot\na\bar c_i,\quad i=1,2 \la{npdiff}\\
    \pa_t\tilde u +\nu A\tilde u=&-K\mathbb{P}(\rho\na\tilde\Phi)-K\mathbb{P}(\tilde\rho\na\bar\Phi).\la{nsdiff}
\end{align}
Multiplying \eqref{npdiff} by $-\D\tilde c_i$ and integrating by parts, we obtain
\be\la{F1}
\bal
\fr{1}{2}\fr{d}{dt}\|\na\tilde c_i\|_{L^2}^2+D_i\|\D \tilde c_i\|_{L^2}^2=&-\int_\Omega(D_iz_i\na\cdot(c_i\na\tilde\Phi+\tilde c_i\na\bar\Phi)-u\cdot\na\tilde c_i-\tilde u\cdot\na\bar c_i)\D\tilde c_i\,dx
\eal
\ee
from which it follows from Hölder and Young's inequalities,
\be\la{diffc}
\bal
\fr{d}{dt}\|\na\tilde c_i\|_{L^2}^2+D_i\|\D \tilde c_i\|_{L^2}^2\lesssim& \|\na c_i\|_{L^2}^2\|\na\tilde\Phi\|_{L^\infty}^2+\|c_i\|_{L^4}^2\|\tilde \rho\|_{L^4}^2+\|\na\tilde c_i\|_{L^2}^2\|\na\bar\Phi\|_{L^\infty}^2+\|\tilde c_i\|_{L^4}^2\|\bar\rho\|_{L^4}^2\\
&+\|u\|_{L^\infty}^2\|\na \tilde c_i\|_{L^2}^2+\|\tilde u\|_{L^6}^2\|\na\bar c_i\|_{L^3}^2\\
\le& G_i(t)(\sum_j\|\na \tilde c_j\|_{L^2}^2+\|\tilde u\|_V^2)
\eal
\ee
where 
\be\la{Gi}
G_i(t)=C(\|\na c_i\|_{L^2}^2+\|c_i\|_{L^4}^2+\|\na\bar\Phi\|_{L^\infty}^2+\|\bar\rho\|_{L^4}^2+\|u\|_{L^\infty}^2+\|\na\bar c_i\|_{L^3}^2).
\ee
Taking the inner product of \eqref{nsdiff} with $A\tilde u$ and integrating by parts, we obtain
\be\la{F2}
\bal
\fr{1}{2}\fr{d}{dt}\|\tilde u\|_{V}^2+\nu\|A\tilde u\|_H^2=-K\int_\Omega \mathbb{P}(\rho\na\tilde\Phi+\tilde\rho\na\bar\Phi)A\tilde u\,dx
\eal
\ee
from which it follows from Hölder and Young's inequalities,
\be\la{diffu}
\bal
\fr{d}{dt}\|\tilde u\|_V^2+\nu\|A\tilde u\|_{H}^2\lesssim& \|\rho\|_{L^2}^2\|\na\tilde\Phi\|_{L^\infty}^2+\|\tilde\rho\|_{L^2}^2\|\na\bar\Phi\|_{L^\infty}^2\\
\le&G_u(t)\sum_j\|\na\tilde c_j\|_{L^2}^2
\eal
\ee
where 
\be\la{Gu}
G_u(t)=C(\|\rho\|_{L^2}^2+\|\na\bar\Phi\|_{L^\infty}^2).
\ee
Summing \eqref{diffc} in $i$ and adding it to \eqref{diffu}, we obtain
\be
\bal
&\fr{d}{dt}\left(\sum_i\|\na\tilde c_i\|_{L^2}^2+\|\tilde u\|_V^2\right)+\sum_iD_i\|\D\tilde c_i\|_{L^2}^2+\nu\|A\tilde u\|_H^2\\
\le& (G_1(t)+G_2(t)+G_u(t))\left(\sum_i\|\na\tilde c_i\|_{L^2}^2+\|\tilde u\|_V^2\right)
\eal
\ee
from which it follows that
\be\la{cts}
\bal
&\sum_i\|\na\tilde c_i(t)\|_{L^2}^2+\|\tilde u(t)\|_V^2+\int_0^t\left(\sum_iD_i\|\D\tilde c_i(s)\|_{L^2}^2+\nu\|A\tilde u(s)\|_H^2\right)\,ds\\
\le& \left(\sum_i\|\na\tilde c_i(0)\|_{L^2}^2+\|\tilde u(0)\|_V^2\right)\exp\left(\int_0^t(G_1(s)+G_2(s)+G_u(s))\,ds\right).
\eal
\ee
Now, from Lemma \ref{l2!} and Sobolev embeddings, we know that $G_i(s)$ and $G_u(s)$ are integrable in time. Thus the exponential term in \eqref{cts} is finite for all $t\ge 0$. It follows from \eqref{cts} that, for each $t\ge 0$, $S(t):\mathcal{V}^+\to\mathcal{V}^+$ is continuous (in fact, it is Lipschitz continuous, with a time dependent Lipschitz constant that grows at most exponentially in time).

Finally we prove \eqref{ab2}, using the log-convexity method of Agmon and Nirenberg \cite{agmon} (see also \cite{abdo,fsqg}). As in the proof of \eqref{ab1}, we consider two solutions $w=(c_1,c_2,u), \bar w=(\bar c_1,\bar c_2, \bar u)$ of NPS with boundary conditions \eqref{gamma}-\eqref{noslip}, with differences denoted by a tilde. This time we additionally assume that $\tilde w(0)=w(0)-\bar w(0)\neq 0.$ Then, we consider the following quantities
\be\la{Y}
Y(t)=\log\left(\fr{1}{E_0(t)}\right),\quad E_0(t)=\|\tilde c_1(t)\|_{L^2}^2+\|\tilde c_2(t)\|_{L^2}^2+\|\tilde u(t)\|_{H}^2.
\ee
To prove the injectivity of the solution map $S$, it suffices to show that for all $t\ge 0$, we have $E_0(t)> 0$. That is, it suffices to show that $Y(t)$ remains finite for all time. 

Multiplying \eqref{npdiff} by $\tilde c_i$ and integrating by parts, we see that 
\be\la{LI}
\bal
\fr{1}{2}\fr{d}{dt}\|\tilde c_i\|_{L^2}^2&=-D_i\|\na\tilde c_i\|_{L^2}^2+\left( D_iz_i\na \cdot(c_i\na\tilde\Phi+\tilde c_i\na\bar\Phi)-u\cdot\na\tilde c_i-\tilde u\cdot\na\bar c_i,\tilde c_i\right)_{L^2}\\
&=:-D_i\|\na \tilde c_i\|_{L^2}^2+(L_i,\tilde c_i)_{L^2}.
\eal
\ee
And, taking the inner product of \eqref{nsdiff} with $\tilde u$ and integrating by parts, we obtain
\be\la{LU}
\bal
\fr{1}{2}\fr{d}{dt}\|\tilde u\|_{H}^2&=-\nu\|\tilde u\|_V^2+(-K\mathbb{P}(\rho\na\tilde\Phi+\tilde\rho\na\bar\Phi),\tilde u)_H\\
&=:-\nu\|\tilde u\|_V^2+(L_u,\tilde u)_H
\eal
\ee
Thus, the function $Y(t)$ satisfies the following differential equation
\be\la{dY}
\fr{1}{2}\fr{d}{dt}Y(t)=-\fr{1}{2}\fr{1}{E_0}\fr{d}{dt}E_0=\fr{E_1}{E_0}-\fr{\sum_i(L_i,\tilde c_i)_{L^2}+(L_u,\tilde u)_{H}}{E_0}
\ee
where 
\be\la{E1}
E_1=D_1\|\na\tilde c_1\|_{L^2}^2+D_2\|\na\tilde c_2\|_{L^2}^2+\nu\|\tilde u\|_V^2.
\ee
Estimating $(L_i,\tilde c_i)_{L^2}$ using Hölder, Young's, and elliptic inequalities, and the fact that $(u\cdot\na \tilde c_i,\tilde c_i)_{L^2}=0$, we obtain
\be\la{li}
\bal
|(L_i,\tilde c_i)_{L^2}|\lesssim& \|\tilde c_i\|_{L^2}(\|\na c_i\|_{L^4}\|\na\tilde\Phi\|_{L^4}+\|c_i\|_{L^\infty}\|\tilde\rho\|_{L^2}+\|\na\tilde c_i\|_{L^2}\|\na\bar\Phi\|_{L^\infty}+\|\tilde c_i\|_{L^2}\|\bar\rho\|_{L^\infty}\\
&+\|\tilde u\|_{L^4}\|\na\bar c_i\|_{L^4})\\
\le&C(\|\na c_i\|_{L^4}+\|c_i\|_{L^\infty}+\|\na\bar\Phi\|_{L^\infty}^2+\|\bar\rho\|_{L^\infty}+\|\na \bar c_i\|_{L^4}^2)(\|\tilde c_1\|_{L^2}^2+\|\tilde c_2\|_{L^2}^2)\\
&+D_i\|\na \tilde c_i\|_{L^2}^2+\nu\|\tilde u\|_V^2\\
\le&B_i(t)E_0+E_1
\eal
\ee
where 
\be
B_i(t)=C(\|\na c_i\|_{L^4}+\|c_i\|_{L^\infty}+\|\na\bar\Phi\|_{L^\infty}^2+\|\bar\rho\|_{L^\infty}+\|\na \bar c_i\|_{L^4}^2).
\ee
Similarly, we have
\be\la{lu}
\bal
|(L_u,\tilde u)_H|\lesssim&\|\tilde u\|_H(\|\rho\|_{L^\infty}\|\na\tilde\Phi\|_{L^2}+\|\tilde\rho\|_{L^2}\|\na\bar\Phi\|_{L^\infty})\\
\lesssim& (\|\rho\|_{L^\infty}+\|\na\bar\Phi\|_{L^\infty})(\|\tilde c_1\|_{L^2}^2+\|\tilde c_2\|_{L^2}^2+\|\tilde u\|_H^2)\\
\le&B_u(t) E_0
\eal
\ee
where
\be
B_u(t)=C(\|\rho\|_{L^\infty}+\|\na\bar\Phi\|_{L^\infty}).
\ee
It now follows from \eqref{dY} that
\be\la{YY}
\fr{1}{2}\fr{d}{dt}Y(t)\le 3\fr{E_1}{E_0}+B_1(t)+B_2(t)+B_u(t)=:3\fr{E_1}{E_0}+B(t)
\ee
where $B(t)$ is integrable in time due to Lemma \ref{l2!} and Sobolev embeddings.

Now, in order to control $Y(t)$, it suffices to control the growth of the Dirichlet quotient $\fr{E_1}{E_0}$. A direct computation shows that the Dirichlet quotient satisfies the following differential equation,
\be\la{dE1E0}
\fr{d}{dt}\fr{E_1}{E_0}=\fr{1}{E_0}\fr{d}{dt}E_1+\fr{E_1}{E_0}\fr{d}{dt}Y.
\ee
It follows from \eqref{F1} and \eqref{F2} that $E_1$ satisfies
\be\la{dE1}
\fr{1}{2}\fr{d}{dt}E_1+E_2=\sum_i -(L_i,D_i\D\tilde c_i)_{L^2}+(L_u,\nu A\tilde u)_H
\ee
where 
\be
E_2=D_1^2\|\D\tilde c_1\|_{L^2}^2+D_2^2\|\D\tilde c_2\|_{L^2}+\nu^2\|A\tilde u\|_H^2
\ee
and $L_i$ and $L_u$ are defined in \eqref{LI} and \eqref{LU}, respectively. Now, substituting \eqref{dY} and \eqref{dE1} into \eqref{dE1E0}, we obtain
\be\la{dede}
\fr{1}{2}\fr{d}{dt}\fr{E_1}{E_0}=-\fr{E_2}{E_0}-\fr{1}{E_0}\sum_i(L_i,D_i\D\tilde c_i)_{L^2}+\fr{1}{E_0}(L_u,\nu A\tilde u)_H+\fr{E_1}{E_0}\left(\fr{E_1}{E_0}-\fr{\sum_i(L_i,\tilde c_i)_{L^2}+(L_u,\tilde u)_{H}}{E_0}\right).
\ee
A direct calculation shows that
\be\bal
-\fr{E_2}{E_0}+\fr{E_1^2}{E_0^2}&=-\left\|\left(-D_1\D-\fr{E_1}{E_0}\right)\fr{\tilde c_1}{E_0^\fr{1}{2}}\right\|_{L^2}^2-\left\|\left(-D_2\D-\fr{E_1}{E_0}\right)\fr{\tilde c_2}{E_0^\fr{1}{2}}\right\|_{L^2}^2-\left\|\left(\nu A-\fr{E_1}{E_0}\right)\fr{\tilde u}{E_0^\fr{1}{2}}\right\|_H^2\\
&=:-\mathbb D\le 0.
\eal
\ee
Thus \eqref{dede} may be rewritten
\be
\bal
&\fr{1}{2}\fr{d}{dt}\fr{E_1}{E_0}+\mathbb D\\
=&\sum_i\fr{1}{E_0^\fr{1}{2}}\left(L_i,\left(-D_i\D-\fr{E_1}{E_0}\right)\fr{\tilde c_i}{E_0^\fr{1}{2}}\right)_{L^2}+\fr{1}{E_0^\fr{1}{2}}\left(L_u,\left(\nu A-\fr{E_1}{E_0}\right)\fr{\tilde u}{E_0^\fr{1}{2}}\right)_H
\eal
\ee
and it follows from Cauchy-Schwarz and Young's inequalities that
\be
\fr{d}{dt}\fr{E_1}{E_0}+\mathbb D\le \fr{1}{E_0}\sum_i\|L_i\|_{L^2}^2+\fr{1}{E_0}\|L_u\|_H^2.
\ee
Then, estimating $\|L_i\|_{L^2}^2$ and $\|L_u\|_H^2$ as in \eqref{li} and \eqref{lu}, we find that
\begin{align}
    \|L_i\|_{L^2}^2\le& \alpha_i(t)E_0+\beta_i(t)E_1\\
    \|L_u\|_{L^2}^2\le& \alpha_u(t)E_0
\end{align}
where
\begin{align}
    \alpha_i(t)=&C(\|\na c_i\|_{L^4}^2+\|c_i\|_{L^\infty}^2+\|\bar\rho\|_{L^\infty})\\
    \beta_i(t)=&C(\|\na\bar\Phi\|_{L^\infty}^2+\|\na \bar c_i\|_{L^4}^2)\\
    \alpha_u(t)=&C(\|\rho\|_{L^\infty}^2+\|\na\bar\Phi\|_{L^\infty}^2)
\end{align}
which are all integrable in time due to Lemma \ref{l2!} and Sobolev embeddings. Thus, $\fr{E_1}{E_0}$ satisfies
\be
\fr{d}{dt}\fr{E_1}{E_0}\le\sum_i\alpha_i(t)+\alpha_u(t)+\fr{E_1}{E_0}\sum_i\beta_i(t)
\ee
and so
\be
\fr{E_1(t)}{E_0(t)}\le \left(\int_0^t\sum_i\alpha_i(s)+\alpha_u(s)\,ds+\fr{E_1(0)}{E_0(0)}\right)\exp\left(\int_0^t\sum_i\beta_i(s)\,ds\right)=E(t),\quad t\ge 0.
\ee
Returning to \eqref{YY} and integrating, we obtain
\be
\fr{1}{2}Y(t)\le \fr{1}{2}Y(0)+\int_0^t\left(3E(s)+B(s)\right)\,ds.
\ee
Thus $Y(t)$ is finite for all finite $t$, and the proof of \eqref{ab2} is complete. This completes the proof of the proposition.
\end{proof}

A consequence of (\ref{ab}) of the preceding proposition is the following corollary.
\begin{cor}\la{brbr}
For $R$ and $B_R^2$ determined by Proposition \ref{p1} (\ref{ab}), there exists a time $T_R>0$ such that $S(t)B_R^2\subset B_R^2$ for all $t\ge T_R$.
\end{cor}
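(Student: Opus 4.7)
The plan is to reduce the statement to the absorbing ball property in Proposition \ref{p1} (\ref{ab}) by showing that the entry time $T_0(\|w_0\|_\mH)$ can be taken uniform in $w_0$ as long as $w_0$ ranges over the bounded set $B_R^2$.

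First, I would record the embedding bound $\mH^2\hookrightarrow\mH$, together with the estimate $\mathcal P\lesssim\|\rho\|_{H^{-1}}^2\lesssim\|\rho\|_{L^2}^2\lesssim\sum_i\|c_i\|_{L^2}^2$, to conclude that
\be
\mF(w_0)\lesssim \|w_0\|_\mH^2\lesssim \|w_0\|_{\mH^2}^2\le R^2\qquad \text{for every }w_0\in B_R^2,
\ee
so $\mF(0)$ is bounded by a constant $M=M(R)$ for all initial data in $B_R^2$.

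Next, I would invoke Remark \ref{monotone}, which states that the entry time $T_0$ in Proposition \ref{p1} (\ref{ab}) may be chosen as a monotone increasing function of $\mF(0)$. Setting
\be
T_R:=T_0(M),
\ee
monotonicity gives $T_0(\mF(w_0))\le T_R$ for every $w_0\in B_R^2$. Applying Proposition \ref{p1} (\ref{ab}) then yields $S(t,w_0)\in B_R^2$ for every $t\ge T_R$ and every $w_0\in B_R^2$, which is exactly $S(t)B_R^2\subset B_R^2$ for all $t\ge T_R$.

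There is really no technical obstacle here; the entire content is the uniformization of the entry time via monotonicity of $T_0$ in $\mF(0)$, which is already built into the statement of Lemma \ref{balllem} and the proof of Proposition \ref{p1} (\ref{ab}). The only mild point to check is the elementary control of $\mF(0)$ by $\|w_0\|_{\mH^2}$ outlined above, which requires nothing beyond a Poincar\'e/elliptic bound on $(-\Delta_D)^{-1}$ and the inclusion $H^2\hookrightarrow L^2$.
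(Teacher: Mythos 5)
Your proof is correct and takes the same route as the paper's own (terse) argument: bound $\mF(0)$ uniformly on $B_R^2$ by noting $\mH^2 \hookrightarrow \mH$ and controlling the electrostatic term $\mathcal{P}$ via the Dirichlet Poincar\'e inequality, then invoke the monotonicity of $T_0$ in $\mF(0)$ from Remark \ref{monotone}. You have merely expanded the one-line reasoning into the underlying estimates, which is fine.
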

\begin{proof}
This follows from Proposition \ref{p1} and Remark \ref{monotone}. Indeed, since $B_R^2$ is bounded in $\mathcal{H}^2$ and thus also in $\mathcal H$, it follows that by choosing $T_R$ large enough (relative to $R$), we have $S(t)B_R^2\subset B_R^2$ for all $t\ge T_R$.
\end{proof}

The properties of $S(t)$ established in Proposition \ref{p1} are sufficient to prove the existence of a global attractor, following the same line of reasoning as in \cite{cf}. For the sake of completeness, we provide a proof of Theorem \ref{globa} below.
\begin{proof}
Recalling the bounded set $B_R^2\subset\mathcal{H}^2$ obtained in Proposition \ref{p1} (\ref{ab}), we claim that
\be
\mA=\bigcap\limits_{t>0}S(t)B_R^2
\ee
is a global attractor for the NPS system. 

The compactness of $\mA$ in $\mathcal V$ follows from the continuity of $S(t):\mathcal V^+\to \mathcal V^+$ and the fact that $B_R^2$ is bounded in $\mH^2$ and thus compact in $\mV$.

We now prove the invariance of $\mathcal A$. Let us fix $t\ge 0$. To show $S(t)\mA\subset\mA$, we first take $x\in \mathcal{A}$. Then for any $\tau> 0$, there exists $y=y(\tau)\in B_R^2$ such that $x=S(\tau)y$. So $S(t)x=S(t)S(\tau)y=S(t+\tau)y$, and thus $S(t)x\in\bigcap_{s> t}S(s)B_R^2.$ It remains to verify that $S(t)x\in S(s)B_R^2$ for all $s\le t$. To show this, fix $s\le t$ and note that $S(t)x=S(s)S(t-s)x$, and so it suffices to show that $S(t-s)x\in B_R^2.$ Since $x\in \mathcal{A}$, there exists $y'\in B_R^2$ such that $x=S(T_R+s)y'$ where $T_R$ is obtained from Corollary \ref{brbr}. Then, $S(t-s)x=S(t-s+T_R+s)y'=S(T_R+t)y'\in B_R^2$, where the last inclusion follows from Corollary \ref{brbr}. This completes the proof of $S(t)\mathcal A\subset \mathcal A$. To prove the opposite inclusion, we fix $x\in\mathcal{A}.$ Then there exists $y\in B_R^2$ such that $x=S(t)y.$ Then fixing $s>0$, we similarly have $z\in B_R^2$ such that $x=S(t+s)z$. Thus $S(t)y=S(t+s)z=S(t)S(s)z$, and from the injectivity of $S(t)$, it follows that $y=S(s)z$ and thus $y\in S(s)B_R^2$. Since $s>0$ was arbitrary, we conclude that in fact $y\in \mathcal{A}.$ Therefore $x=S(t)y\in S(t)\mathcal{A}$. Thus we have shown $\mathcal{A}\subset S(t)\mathcal{A}$ and the proof of the invariance of $\mathcal{A}$ is complete.

We now prove the maximality of $\mA$. Suppose that $\mB\subset \mV^+$ is bounded and satisfies $S(t)\mB=\mB$ for all $t\ge 0$. Since $\mB$ is bounded in $\mV$, by the same reasoning used to prove Corollary \ref{brbr}, there exists $t_*$ such that $S(t)\mB\subset B_R^2$ for all $t\ge t_*$. Now we fix $x\in \mB$. We aim to show that $x\in \mA$. To this end, our goal is to show that for every $s>0$, there exists $y=y(s)\in B_R^2$ such that $x=S(s)y$. Because $\mB$ is invariant under $S$ by assumption, it follows that, for every $s>0$, there exists $z=z(s)\in \mB$ such that $x=S(s+t_*)z$. Thus $x=S(s)S(t_*)z$, and since $S(t_*)z\in B_R^2$, we have shown $x\in S(s)B_R^2$, and thus $x\in\mathcal A.$

To prove the attractor property (Definition \ref{GA} (\ref{att})), it suffices to show that for every $v\in\mV$, the omega set of $v$
\be
\omega(v)=\{u\in \mV\,|\,\text{there exists a sequence of times } t_n\to \infty \text{ such that } \lim_{n\to\infty}\|u-S(t_n)v\|_\mV=0\}
\ee
is bounded in $\mV$ and invariant under $S(t)$ for all $t\ge 0$. Indeed, if this is the case, then Definition \ref{GA} (\ref{maximal}) applies to $\mB=\omega(v)$ so that $\omega(v)\subset\mA$. Then, for the sake of contradiction, if we assume that there exists $\delta>0$ and a sequence of times $t_n\to \infty$ such that $d_\mV(S(t_n)v,\mA)\ge \delta$, then for all $n$ large enough, we have $S(t_n)v\in B_R^2$ (Proposition \ref{p1} (\ref{ab})), and from the compactness of $B_R^2$ in $\mV$ it follows that there exists $\bar v\in \mV^+$ and a subsequence $t_{n_j}$ such that $S(t_{n_j})v\to \bar v$ in $\mV$. By definition, we have $\bar v\in\omega(v)$. But on the other hand, by continuity, we have $d_\mV(\bar v,\mA)\ge \delta$, which in particular implies $\bar v\not\in\mA\supset \omega(v)$. This contradiction implies $\lim\limits_{t\to\infty}d_\mV(S(t)v,\mA)=0$. 

It remains to prove the boundedness and invariance of $\omega(v)$. The boundedness follows from the fact that for all $u\in\omega(v)$, there exists a sequence $t_n\to\infty$ such that $\lim\limits_{n\to\infty}\|u-S(t_n)v\|_\mV=0$ and the fact that $S(t_n)v$ is in  $B_R^2$ (a bounded set in $\mV$) for all sufficiently large $n$. To show invariance, let us first take $u\in\omega(v)$, then there exists $t_n\to\infty$ such that $u=\lim\limits_{n\to\infty}S(t_n)v$ so that, for any $s\ge 0$, we have $S(s)u=\lim\limits_{n\to\infty}S(s+t_n)v.$ This shows that $S(s)\omega(v)\subset\omega(v).$ To prove the opposite inclusion, we fix $s\ge 0$, and we take $u\in \omega(v)$ and a sequence $t_n\to\infty$ such that $u=\lim\limits_{n\to\infty}S(t_n)v$. Then we consider the sequence $S(t_n-s)v$ where we assume that $n$ is large enough so that $t_n-s\ge 0$. Since $S(t_n-s)v\in B_R^2$ for all sufficiently large $n$ and because $B_R^2$ is compact in $\mV$, there exists a subsequence $t_{n_j}\to\infty$ such that $S(t_{n_j}-s)v$ converges to some $\bar v\in\omega(v)$. Thus, we have that $S(s)S(t_{n_j}-s)v=S(t_{n_j})v$ converges to both $S(s)\bar v$ and $u$. By the uniqueness of limits, we have $u=S(s)\bar v$, and so $u\in S(s)w(v)$. This establishes $\omega(v)\subset S(s)\omega(v)$, and completes the proof of the invariance of $\omega(v)$.

Lastly, we establish the connectedness of $\mA$. For the sake of contradiction, suppose there exist disjoint, nonempty open sets $O_1$ and $O_2$ in $\mV$ such that $\mA\subset O_1\sqcup O_2$ and $\mA\cap O_1\neq \emptyset$, $\mA\cap O_2\neq \emptyset$. We fix $x_1\in\mA\cap O_1$, $x_2\in\mA\cap O_2$, and a time $t>0$. Then there exist $y_1=y_1(t)$ and $y_2=y_2(t)$ in $B_R^2$ such that $x_1=S(t)y_1$ and $x_2=S(t)y_2$. We denote by $\gamma=\gamma(t)\subset B_R^2$ the line connecting $y_1$ and $y_2$. Then by the injectivity and continuity of $S(t)$, $S(t)\gamma$ is a simple curve connecting $x_1$ and $x_2$. Thus, there exists $x(t)\in S(t)\gamma$ such that $x(t)\in F=\mV\setminus (O_1\sqcup O_2)$, and we let $y(t)\in\gamma$ be the preimage of $x(t)$ under $S(t)$ i.e. $x(t)=S(t)y(t)$. We observe that $F$ is closed in $\mV$ and is disjoint from $\mA$.

Now, for all $t$ large enough, we have $x(t)=S(t)y(t)\in B_R^2$ (this inclusion follows from Corollary \ref{brbr} and the fact $y(t)\in\gamma\subset B_R^2$). Thus, by compactness of $B_R^2$, there exists $t_n\to\infty$ such that $x(t_n)\to_\mV \bar x$ for some $\bar x\in\mV^+$. Since each $x(t_n)\in F$ and $F$ is closed in $\mV$, we have that $\bar x\in F$. Now, we reach a contradiction if we show that $\bar x$ is in $\mA$, which is disjoint from $F$. To this end, fix $s>0$, and consider the sequence $S(t_n-s)y(t_n)$ for sufficiently large $n$ so that $t_n-s\ge 0$. Since $y(t_n)\in B_R^2$, we have $S(t_n-s)y(t_n)\in B_R^2$ for all $n$ sufficiently large, and by compactness there is a subsequence $t_{n_j}$ so that $S(t_{n_j}-s)y(t_{n_j})$ converges strongly in $\mV$ to some $\bar y\in B_R^2$. Then, on one hand $S(s)S(t_{n_j}-s)y(t_{n_j})$ converges strongly in $\mV$ to $S(s)\bar y$. On the other hand, since $S(s)S(t_{n_j}-s)y(t_{n_j})=S(t_{n_j})y(t_{n_j})=x(t_{n_j})$, the same sequence converges to $\bar x$. Thus $\bar x=S(s)\bar y$. Since $\bar y\in B_R^2$ and $s>0$ was arbitrary, we have shown $\bar x\in \mA$. This contradiction completes the proof of connectedness, and the proof of Theorem \ref{globa} is complete.
\end{proof}

\subsection{Proof of Theorem \ref{fdf}.}
In this subsection, we show that the global attractor $\mathcal{A}$ obtained in the previous subsection has finite fractal (box-counting) dimension. We recall the definition of the fractal dimension. For a compact set $K\subset \mV$, the fractal dimension of $K$ is given by
\be\la{fd}
d_f(K)=\limsup_{r\to 0^+}\fr{\log n_K(r)}{\log(1/r)}
\ee
where 
\be\la{nK}
n_K(r)=\text{minimum number of balls in $\mV$ of radii at most $r$ needed to cover $K$.}
\ee
An equivalent definition is given by
\be\la{fddef}
d_f(K)=\inf\{D>0\,|\, \limsup_{r\to 0^+} r^D n_K(r)\}.
\ee

As a first step in estimating $d_f(\mA)$, we consider, for initial conditions $w_0\in\mV^+$, the linearization of NPS \eqref{np}-\eqref{stokes} around the solution $w(t)=S(t)w_0$, which is obtained by taking initial conditions $\bar w_0\in\mV^+$ close to $w_0$, considering the time evolution equations satisfied by the perturbation $\tilde w(t)=(\tilde c_1(t),\tilde c_2(t),\tilde u(t))=S(t)\bar w_0-S(t) w_0$ (c.f. \eqref{npdiff},\eqref{nsdiff}), and dropping the terms that are quadratic in $\tilde w$:
\begin{align}
    \pa_t \underline c_i=&D_i\D \underbar c_i+D_iz_i\na\cdot(c_i\na\underline\Phi+\underline c_i\na\Phi)-u\cdot\na\underline c_i-\underline u\cdot\na c_i=:D_i\D\underline c_i+\mL_i(t;w_0) \underline w\la{LS1}\\
    \underline\Phi=&\fr{1}{\ep}(-\D_D)^{-1}\underline\rho=\fr{1}{\ep}(-\D_D)^{-1}(\underline c_1-\underline c_2)\la{tpois}\\
    \pa_t \underline u=&-\nu A\underline u-K\mathbb{P}(\rho\na\underline\Phi)-K\mathbb{P}(\underline\rho\na\Phi)=:-\nu A\underline u+\mL_u(t;w_0)\underline w\la{LS2}
\end{align}
where $-\D_D$ is the homogeneous Dirichlet Laplace operator, and we view $\mL_i(t;w_0),\mL_u(t;w_0)$ as time dependent, first order linear operators with \textit{known} coefficients given in terms of $w(t)$, which in turn is uniquely determined by $w_0$ via the solution operator $S(t)$. Above, we have replaced tildes with underlines to highlight the fact that $\underline w$ satisfies a linearized version of the full, nonlinear system satisfied by the actual perturbation $\tilde w$. 

We denote by 
\be S'(t;w_0):\mV_0\to \mV_0\ee
the solution map to the linear system \eqref{LS1}-\eqref{LS2} such that, for $\underline w_0\in \mV_0$, $S'(t;w_0)\underline w_0=(\underline c_1,\underline c_2,\underline u)$ is the unique solution to \eqref{LS1}-\eqref{LS2} with initial conditions $\underline w_0=(\underline c_1(0),\underline c_2(0),\underline u(0))$ and homogeneous Dirichlet boundary conditions for $\underline c_i$ and $\underline u$. 

The linearized dynamics of small perturbations $\underline w_0\in\mVo$, described by $S'(t,w_0)\underline w_0$, should approximate the nonlinear dynamics of these perturbations, $\tilde w(t)=S(t)(w_0+\underline w_0)-S(t)w_0$. Below, in Proposition \ref{ctsdiff}, we make this statement precise: 

\begin{prop}\la{ctsdiff}
Let $w_0\in\mA$ and $0<r\le 1$. Then there exists a constant $\mF_\mA$ (independent of $r$) such that for every $\bar w_0\in\mV_\gamma^+= \{(f,g,h)\in \mV^+\,|\, f_{|\pa\Omega}=\gamma_1, g_{|\pa\Omega}=\gamma_2\}$ with $0<\|\bar w_0-w_0\|_\mV\le r$, we have
\be\la{FAA}
\|S(t)\bar w_0-S(t) w_0-S'(t,w_0)(\bar w_0-w_0)\|_{\mV}\le e^{\mF_\mA(1+t)} \|\bar w_0-w_0\|_\mV^2.
\ee
Here, the constant $\mF_\mA$ depends on parameters, boundary conditions and on the diameter of $\mA$ in $\mV$ (since $\mA\subset B_R^2$, we may view $\mF_\mA$ as a function, say, of $R$). In particular, we have for each $w_0\in\mA$ and $t>0$,
\be
\lim_{r\to 0^+}\sup_{\substack{\bar w_0\in\mV_\gamma^+\\0< \|\bar w_0-w_0\|_\mV\le r}}\fr{\|S(t)\bar w_0-S(t) w_0-S'(t,w_0)(\bar w_0-w_0)\|_\mV}{\|\bar w_0-w_0\|_\mV}=0.
\ee
\end{prop}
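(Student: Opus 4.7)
Setting $\tilde w=\bar w-w$, $\underline w=S'(t,w_0)(\bar w_0-w_0)$, and $\theta=\tilde w-\underline w=(\theta_1,\theta_2,\theta_u)$, the error $\theta$ has zero initial data and vanishing traces (for $\theta_i$) and no-slip (for $\theta_u$) on $\pa\Omega$. Subtracting the linearized system \eqref{LS1}--\eqref{LS2} from the nonlinear difference system \eqref{npdiff}--\eqref{nsdiff} and using the algebraic splittings
\[
\tilde c_i\na\bar\Phi-\underline c_i\na\Phi=\theta_i\na\bar\Phi+\underline c_i\na\tilde\Phi,\qquad \tilde u\cdot\na\bar c_i-\underline u\cdot\na c_i=\theta_u\cdot\na\bar c_i+\underline u\cdot\na\tilde c_i,
\]
together with the analogous splitting $\tilde\rho\na\bar\Phi-\underline\rho\na\Phi=\theta_\rho\na\bar\Phi+\underline\rho\na\tilde\Phi$ for the Stokes term, I find
\begin{align*}
\pa_t\theta_i-D_i\D\theta_i&=D_iz_i\div(c_i\na\theta_\Phi+\theta_i\na\bar\Phi)-u\cdot\na\theta_i-\theta_u\cdot\na\bar c_i+D_iz_i\div(\underline c_i\na\tilde\Phi)-\underline u\cdot\na\tilde c_i,\\
\pa_t\theta_u+\nu A\theta_u&=-K\mathbb{P}(\rho\na\theta_\Phi+\theta_\rho\na\bar\Phi)-K\mathbb{P}(\underline\rho\na\tilde\Phi).
\end{align*}
The right-hand side splits naturally into terms linear in $\theta$, of the same structure as the difference system \eqref{diffc}--\eqref{diffu}, plus \emph{bilinear source terms} $\div(\underline c_i\na\tilde\Phi)$, $\underline u\cdot\na\tilde c_i$, and $\mathbb{P}(\underline\rho\na\tilde\Phi)$, each a product of one factor from the linearization and one from the nonlinear perturbation.

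Before running the energy estimate on $\theta$, I would establish parallel a priori bounds on $\tilde w$ and $\underline w$, uniform in $w_0\in\mA$. Since $w_0\in\mA\subset B_R^2$, Proposition \ref{p1}(\ref{ab}) gives uniform-in-time $\mH^2$ control on $w$; since $r\le 1$ forces $\|\bar w_0\|_\mV\le R+1$, Lemma \ref{l2!} gives locally integrable $\mH^2$ control on $\bar w$. Rerunning the computation leading to \eqref{cts} with these coefficient bounds yields
\[
\|\tilde w(t)\|_\mV^2+\int_0^t\|\tilde w(s)\|_{\mH^2}^2\,ds\le e^{\mF_\mA(1+t)}\|\bar w_0-w_0\|_\mV^2,
\]
where $\mF_\mA$ denotes a generic constant depending only on parameters, boundary data, and $R$. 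An identical computation applied to the linearized system \eqref{LS1}--\eqref{LS2}, whose coefficients depend only on the (uniformly $\mH^2$-controlled) solution $w$, produces the same bound for $\underline w$.

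I would then run an $\mV$-energy estimate for $\theta$: multiply the $\theta_i$ equation by $-\D\theta_i$, the $\theta_u$ equation by $A\theta_u$, and integrate by parts. The terms linear in $\theta$ contribute $G(t)\|\theta\|_\mV^2$ with $\int_0^t G\le\mF_\mA(1+t)$, exactly as in \eqref{diffc}--\eqref{diffu}. The bilinear source terms, estimated via Hölder's inequality, the Poisson bound $\|\tilde\Phi\|_{H^2}\lesssim\ep^{-1}\|\tilde c_1-\tilde c_2\|_{L^2}$, Sobolev embeddings ($H^2\hookrightarrow L^\infty$, $H^1\hookrightarrow L^6$), and Young's inequality, contribute schematically
\[
\left|\int_\Omega\div(\underline c_i\na\tilde\Phi)\,\D\theta_i\,dx\right|\le\tfrac{D_i}{4}\|\D\theta_i\|_{L^2}^2+C\|\underline c_i\|_{H^2}^2\|\tilde c_1-\tilde c_2\|_{L^2}^2,
\]
and analogous estimates for $\underline u\cdot\na\tilde c_i$ and $\mathbb{P}(\underline\rho\na\tilde\Phi)$. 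After absorbing the dissipative factors, the remaining forcing $H(t)$ is bounded by a sum of products of the form $\|\underline w\|_{\mH^2}^2\|\tilde w\|_\mV^2+\|\underline w\|_\mV^2\|\tilde w\|_{\mH^2}^2$. By the a priori bounds above, $\int_0^t H(s)\,ds\le e^{\mF_\mA(1+t)}\|\bar w_0-w_0\|_\mV^4$, so Grönwall's inequality applied to
\[
\frac{d}{dt}\|\theta\|_\mV^2\le G(t)\|\theta\|_\mV^2+H(t),\qquad \theta(0)=0,
\]
yields $\|\theta(t)\|_\mV\le e^{\mF_\mA(1+t)}\|\bar w_0-w_0\|_\mV^2$, which is \eqref{FAA}; the limit statement is immediate.

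The principal technical difficulty lies in the bilinear source estimates: each $\na\tilde\Phi$ or $\na\underline\Phi$ costs a factor of $\ep^{-1}$ via Poisson, and the product must be distributed between $L^p$ spaces so that (i) one factor is absorbable into the dissipation from $\|\D\theta_i\|_{L^2}^2$ or $\|A\theta_u\|_H^2$, (ii) the remainder is integrable against the available $L^\infty(\mV)\cap L^2(\mH^2)$ a priori bounds on $\tilde w$ and $\underline w$, and (iii) all prefactors are uniform over $w_0\in\mA$. The inclusion $\mA\subset B_R^2$ secures uniformity for $w$-dependent coefficients, and the hypothesis $r\le 1$ plays the same role for $\bar w$-dependent coefficients.
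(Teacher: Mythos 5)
Your proof is correct and tracks the same overall strategy as the paper: write the evolution equation for the error $\theta=\tilde w-\underline w$, isolate the terms linear in $\theta$ from the bilinear source terms, run a Dirichlet ($\mV$) energy estimate, and close by Gr\"onwall using a priori bounds uniform over $\mA$. The one genuine difference is in how you split the cross term $\tilde c_i\na\bar\Phi-\underline c_i\na\Phi$ (and analogously for the transport and Stokes forcing terms). You choose $\theta_i\na\bar\Phi+\underline c_i\na\tilde\Phi$, so your bilinear sources are products $\underline w\cdot\tilde w$; the paper chooses $\xi_i\na\Phi+\tilde c_i\na\tilde\Phi$, so its bilinear sources are $\tilde w\cdot\tilde w$. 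The paper's choice is slightly more economical: it only ever needs a priori control on the nonlinear difference $\tilde w$, which is supplied directly by \eqref{cts} in both $L^\infty(\mV)$ and $L^2(\mH^2)$. Your version additionally requires the parallel a priori bounds on the linearized solution $\underline w$ (both $\sup_t\|\underline w\|_\mV$ and $\int_0^t\|\underline w\|_{\mH^2}^2$). Those bounds are indeed available --- they are precisely the content of Proposition \ref{S'}(\ref{S'c}) and \eqref{uvb}, proved by mimicking the computation that gave \eqref{cts} --- so there is no circularity, only an extra ingredient. A small wrinkle: after your splitting, the linear-in-$\theta$ coefficients involve $\bar\Phi$ and $\bar c_i$ rather than $\Phi$ and $c_i$, so the integrable coefficient $G(t)$ in your Gr\"onwall estimate depends on the solution $\bar w$ starting from $\bar w_0$; you correctly note that $r\le1$ and $w_0\in\mA\subset B_R^2$ pin $\|\bar w_0\|_\mV\le R+1$, after which Lemma \ref{l2!} gives the needed time-integrability with a constant depending only on $R$. (One stray remark: your closing paragraph mentions $\na\underline\Phi$ costing an $\ep^{-1}$, but with your splitting $\na\underline\Phi$ never appears in the bilinear sources --- only $\na\tilde\Phi$ does. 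This is cosmetic and does not affect the argument.)
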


\begin{proof}
We fix $w_0\in\mA$ and $\bar w_0\in\mV_\gamma^+$ as in the hypothesis. We denote 
\begin{align*}
\tilde w_0=&\bar w_0-w_0\\
S(t)w_0=&w(t)=(c_1(t),c_2(t),u(t))\\
S(t)\bar w_0=&\bar w(t)=(\bar c_1(t),\bar c_2(t),\bar u(t))\\
S'(t,w_0)\tilde w_0=&\underline w(t)=(\underline c_1,\underline c_2,\underline u)\\
\xi_i=&\bar c_i-c_i-\underline c_i\\
\xi_\rho=&\bar\rho-\rho-\underline \rho\\
\xi_\Phi=&\bar\Phi-\Phi-\underline \Phi\\
\xi_u=&\bar u-u-\underline u\\
\tilde w=&(\tilde c_1,\tilde c_2,\tilde u)=(\bar c_1-c_1,\bar c_2-c_2,\bar u-u)
\end{align*}
where $\underline\rho=\underline c_1-\underline c_2$ and $\underline\Phi=(-\ep\D_D)^{-1}\underline\rho$. For each $t>0$, the function $S(t)\bar w_0-S(t) w_0-S'(t,w_0)\tilde w_0$ has zero trace, so by Poincaré's inequality, we have
\be
\bal
N_1+N_2+N_u:=&\|\na \xi_1\|_{L^2}^2+\|\na \xi_2\|_{L^2}^2+\|\xi_u\|_V^2\\
\gtrsim& \|S(t)\bar w_0-S(t) w_0-S'(t,w_0)\tilde w_0\|_\mV^2.
\eal
\ee
So, it suffices to control the growth of $N_1, N_2, N_u$. We achieve this by computing the time derivatives of $N_1,N_2,N_u$. We start with $N_i$, $i=1,2$.

We observe that $\xi_i$ satisfies the equation
\be
\pa_t \xi_i-D_i\D\xi_i=D_iz_i\na\cdot(\xi_i\na\Phi+c_i\na\xi_\Phi)-\xi_u\cdot\na c_i-u\cdot\na\xi_i+D_iz_i\na\cdot(\tilde c_i\na\tilde\Phi)-\tilde u\cdot\na\tilde c_i.
\ee
Multiplying the above equation by $-\D \xi_i$, integrating by parts, and using Hölder and Young's inequalities together with elliptic estimates, we obtain
\be\la{Ni}
\bal
\fr{dN_i}{dt}+D_i\|\D\xi_i\|_{L^2}^2\lesssim& \|\na\xi_i\|_{L^2}^2\|\na\Phi\|_{L^\infty}^2+\|\xi_i\|_{L^2}^2\|\rho\|_{L^\infty}^2+\|\na c_i\|_{L^2}^2\|\na\xi_\Phi\|_{L^\infty}^2+\|c_i\|_{L^4}^2\|\xi_\rho\|_{L^4}^2\\
&+\|\xi_u\|_{L^4}^2\|\na c_i\|_{L^4}^2+\|u\|_{L^\infty}^2\|\na\xi_i\|_{L^2}^2\\
&+\|\na\tilde c_i\|_{L^2}^2\|\na\tilde\Phi\|_{L^\infty}^2+\|\tilde c_i\|_{L^4}^2\|\tilde \rho\|_{L^4}^2+\|\tilde u\|_{L^\infty}^2\|\na\tilde c_i\|_{L^2}^2\\
\lesssim&\Xi_i(t)(N_1+N_2+N_u)+\left(\sum_j\|\na \tilde c_j\|_{L^2}^2+\|A\tilde u\|_H^2\right)\sum_j\|\na\tilde c_j\|_{L^2}^2
\eal
\ee
where
\be\la{Xii}
\Xi_i(t)=\|\na\Phi\|_{L^\infty}^2+\|\rho\|_{L^\infty}^2+\|\na c_i\|_{L^2}^2+\|c_i\|_{L^4}^2+\|\na c_i\|_{L^4}^2+\|u\|_{L^\infty}^2.
\ee
Similarly, $\xi_u$ satisfies the equation
\be
\pa_t\xi_u+\nu A\xi_u=-K\mathbb{P}(\xi_\rho\na\Phi+\rho\na\xi_\Phi)-K\mathbb{P}(\tilde\rho\na\tilde\Phi)
\ee
so taking the inner product with $A\xi_u$, integrating by parts, and using Hölder and Young's inequalities together with elliptic estimates, we obtain
\be\la{Nu}
\bal
\fr{dN_u}{dt}+\nu\|A\xi_u\|_{H}^2\lesssim& \|\xi_\rho\|_{L^2}^2\|\na\Phi\|_{L^\infty}^2+\|\rho\|_{L^2}^2\|\na\xi_\Phi\|_{L^\infty}^2+\|\tilde\rho\|_{L^2}^2\|\na\tilde\Phi\|_{L^\infty}^2\\
\lesssim& \Xi_u(t)(N_1+N_2)+\left(\sum_j\|\na \tilde c_j\|_{L^2}^2\right)^2
\eal
\ee
where 
\be\la{Xiu}
\Xi_u(t)=\|\na\Phi\|_{L^\infty}^2+\|\rho\|_{L^2}^2.
\ee
Thus, adding \eqref{Ni} and \eqref{Nu} and applying Grönwall's inequality, we obtain
\be
\bal
N_1(t)+N_2(t)+N_u(t)\le& C\sup_{s\in[0,t]}\sum_i\|\na\tilde c_i(s)\|_{L^2}^2\int_0^t\left(\sum_i\|\na\tilde c_i(s)\|_{L^2}^2+\|A\tilde u(s)\|_H^2\right)\,ds\\
&\times\exp\left(C\int_0^t\sum_i\Xi_i(s)+\Xi_u(s)\,ds\right)
\eal
\ee
where we have used the fact that $N_1(0)=N_2(0)=N_u(t)=0$. Then using \eqref{cts}, we find that
\be\la{above}
N_1(t)+N_2(t)+N_u(t)\le \|\tilde w_0\|_{\mV}^4\exp\left(C\int_0^t\sum_i(\Xi_i(s)+G_i(s))+\Xi_u(s)+G_u(s)\,ds\right).
\ee
Now, we recall the definitions of $G_i,G_u,\Xi_i,\Xi_u$ (c.f. \eqref{Gi},\eqref{Gu},\eqref{Xii},\eqref{Xiu}). By Lemma \ref{l2!} and using Sobolev embeddings, we have
\be
N_1(t)+N_2(t)+N_u(t)\le \|\tilde w_0\|_\mV^4e^{\tilde \mF_\mA(1+t)}
\ee
where $\tilde\mF_\mA$ depends on the norms $\|w_0\|_\mV,\|\bar w_0\|_\mV$. However, these norms are bounded by $R$ and $R+1$, respectively, by hypothesis. Thus we have shown \eqref{FAA} and the proof is complete.
\end{proof}

Motivated by the preceding proposition, we analyze the linear operator $S'(t,w_0)$ in the proposition below.

\begin{prop}\la{S'}
Let $w_0\in\mV^+$. The operator $S'(t,w_0):\mV_0\to\mV_0$ satisfies the following:
\begin{enumerate}[(i)]
    \item $S'(t,w_0)$ is injective for each $t\ge 0$
    \item\la{S'c} $S'(t,w_0):\mV_0\to\mV_0$ is bounded for each $t>0$ and satisfies
    \be
    \|S'(t,w_0)\underline w_0\|_{\mVo}\le \|\underline w_0\|_{\mVo} e^{c(1+t)}
    \ee
    for constant $c>0$ that depends only on parameters, boundary conditions, and $\|w_0\|_\mV$
    \item\la{S'cc} $S'(t,w_0)$ maps $\mV_0$ into $\mV_0\cap\mH^2$ boundedly for each $t>0$. Thus $S'(t,w_0)$ is a compact operator on $\mV_0$
\end{enumerate}
\end{prop}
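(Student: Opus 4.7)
The plan is to prove (ii) first by a $\mVo$ energy estimate, then (i) by an Agmon--Nirenberg log-convexity argument following the proof of Proposition \ref{p1} part \eqref{ab2}, and finally (iii) by a parabolic smoothing argument yielding an $\mH^2$ bound, from which compactness on $\mVo$ follows via the compact embedding $\mH^2\cap\mVo\hookrightarrow\mVo$ (Rellich--Kondrachov). The key enabling tool throughout is Lemma \ref{l2!}, which furnishes the time-integrability (with at most linear growth in $t$) of the Sobolev norms of the background $w(t)=S(t)w_0$ that appear as coefficients in \eqref{LS1}--\eqref{LS2}.

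For part (ii), I would multiply \eqref{LS1} by $-\D\underline c_i$ and take the $V$-inner product of \eqref{LS2} with $A\underline u$. After integration by parts and application of H\"older's, Young's, and elliptic inequalities (using $-\ep\D\underline\Phi=\underline\rho$ together with $H^1\hookrightarrow L^6$), the sum of the resulting estimates reads
\be
\fr{d}{dt}N(t)+\sum_iD_i\|\D\underline c_i\|_{L^2}^2+\nu\|A\underline u\|_H^2\le K(t)\,N(t),
\ee
where $N(t)=\|\na\underline c_1\|_{L^2}^2+\|\na\underline c_2\|_{L^2}^2+\|\underline u\|_V^2$ and $K(t)$ is a sum of norms of the background such as $\|\na\Phi\|_{L^\infty}^2$, $\|\rho\|_{L^\infty}^2$, $\|u\|_{L^\infty}^2$, $\|\na c_i\|_{L^4}^2$, and $\|c_i\|_{L^\infty}^2$, each dominated by $1+\|w(t)\|_{\mH^2}^2$ by Sobolev embedding. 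Lemma \ref{l2!} gives $\int_0^t K(s)\,ds\le c(1+t)$ with $c$ depending on $\|w_0\|_\mV$, and Gr\"onwall's inequality delivers the exponential bound of (ii). Part (i) then follows by the log-convexity argument exactly as in the proof of Proposition \ref{p1} part \eqref{ab2}, applied to $Y(t)=\log(1/E_0(t))$ with $E_0(t)=\|\underline c_1\|_{L^2}^2+\|\underline c_2\|_{L^2}^2+\|\underline u\|_H^2$; the analysis is in fact simpler here, since only one solution $\underline w$ (with given background coefficients) appears rather than a difference of two nonlinear solutions.

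For part (iii), the dissipation on the left of the inequality above already yields the integrated bound $\int_0^t(\|\D\underline c_i\|_{L^2}^2+\|A\underline u\|_H^2)\,ds\lesssim e^{c(1+t)}\|\underline w_0\|_\mVo^2$. To upgrade to a pointwise $\mH^2$ bound, I would pair \eqref{LS1} with $\pa_t\underline c_i$ and \eqref{LS2} with $\pa_t\underline u$, producing $\|\pa_t\underline c_i\|_{L^2}^2$ and $\|\pa_t\underline u\|_H^2$ in $L^1_{\mathrm{loc}}$; a time-weighted version of the same estimate (multiplying through by $s$ and integrating) then yields pointwise control of $s\|\pa_s\underline c_i(s)\|_{L^2}^2$ and $s\|\pa_s\underline u(s)\|_H^2$. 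Reading the equations gives $D_i\|\D\underline c_i\|_{L^2}\le\|\pa_t\underline c_i\|_{L^2}+\|\mL_i\underline w\|_{L^2}$ and $\nu\|A\underline u\|_H\le\|\pa_t\underline u\|_H+\|\mL_u\underline w\|_H$, delivering the $\mH^2$ bound with a constant of order $t^{-1/2}e^{c(1+t)}$. The main technical obstacle is the control of $\|\mL_i\underline w\|_{L^2}$ and $\|\mL_u\underline w\|_H$: the divergence-form terms $\na\cdot(c_i\na\underline\Phi)$ and $\na\cdot(\underline c_i\na\Phi)$ need $L^\infty$ bounds on $c_i,\na\Phi$ and $L^p$ bounds on $\rho,\underline\rho$, all of which are controlled cleanly by $(1+\|w\|_{\mH^2}^2)\|\underline w\|_\mVo^2$ via Sobolev embedding and elliptic regularity, after which Lemma \ref{l2!} closes the argument.
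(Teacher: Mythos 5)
Your treatment of parts (ii) and (i) matches the paper's: the $\mVo$ energy estimate with $-\D$- and $A$-pairings, Gr\"onwall via Lemma \ref{l2!}, and then the Agmon--Nirenberg log-convexity argument transplanted from the proof of Proposition \ref{p1}\eqref{ab2}. The paper itself just references those earlier steps, so the content is the same. Your remark that the log-convexity argument simplifies (only one nonlinear background, so the coefficients are ``known'') is accurate.

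In part (iii) there is a gap in the way you describe the parabolic smoothing. Pairing \eqref{LS1}--\eqref{LS2} with $\pa_t\underline c_i$ and $\pa_t\underline u$ produces a differential inequality of the schematic form
\be
\fr{d}{dt}\left(\sum_i\fr{D_i}{2}\|\na\underline c_i\|_{L^2}^2+\fr{\nu}{2}\|\underline u\|_V^2\right)+\fr{1}{2}\left(\sum_i\|\pa_t\underline c_i\|_{L^2}^2+\|\pa_t\underline u\|_H^2\right)\le\text{(background)}\cdot\|\underline w\|_{\mVo}^2,
\ee
and multiplying this by $s$ and integrating yields control of $t\,\|\underline w(t)\|_{\mVo}^2$ and of the \emph{time integral} $\int_0^t s\bigl(\sum_i\|\pa_s\underline c_i\|_{L^2}^2+\|\pa_s\underline u\|_H^2\bigr)\,ds$ --- but not of the \emph{pointwise} quantity $s\|\pa_s\underline c_i(s)\|_{L^2}^2$ that you assert. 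To obtain a pointwise-in-time bound on $\|\pa_t\underline c_i\|_{L^2}$ and $\|\pa_t\underline u\|_H$ (which is what you then feed into the equations to read off $\|\D\underline c_i\|_{L^2}$ and $\|A\underline u\|_H$), you must first differentiate \eqref{LS1}--\eqref{LS2} in time and close an energy estimate for $\|\pa_t\underline c_i\|_{L^2}^2$ and $\|\pa_t\underline u\|_H^2$, using the $L^1_{\mathrm{loc}}$ control you obtained as a starting datum at some $t_0\in(0,t)$ (or, equivalently, via the time-weighted estimate applied to \emph{that} differentiated system). This is precisely Step 5 of the proof of Proposition \ref{p1}, which is what the paper's proof of (iii) references; it also introduces $\pa_t$ of the background coefficients, whose integrability must be checked separately (via the nonlinear equations and Lemma \ref{l2!}). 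Once that step is inserted, the remainder of your argument --- reading the equations to bound $\|\D\underline c_i\|_{L^2}$ and $\|A\underline u\|_H$, controlling $\|\mL_i\underline w\|_{L^2}$ and $\|\mL_u\underline w\|_H$ by $(1+\|w\|_{\mH^2}^2)\|\underline w\|_{\mVo}^2$, and invoking compactness of $\mH^2\cap\mVo\hookrightarrow\mVo$ --- is fine and matches the paper.
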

\begin{proof}
The proof of the proposition follows the same steps as in the proof of Proposition \ref{p1}, and so we just highlight the main steps. We first prove (\ref{S'c}). Multiplying \eqref{LS1} by $-\D\underline c_i$ and integrating by parts, we obtain
\be
\bal
\fr{d}{dt}\|\na\underline c_i\|_{L^2}^2+D_i\|\D\underline c_i\|_{L^2}^2\lesssim& \|\na c_i\|_{L^4}^2\|\na\underline\Phi\|_{L^4}^2+\|c_i\|_{L^4}^2\|\underline\rho\|_{L^4}^2+\|\na\underline c_i\|_{L^2}^2\|\na\Phi\|_{L^\infty}^2+\|\underline c_i\|_{L^4}^2\|\rho\|_{L^4}^2\\
&+\|u\|_{L^\infty}^2\|\na\underline c_i\|_{L^2}^2+\|\underline u\|_{L^4}^2\|\na c_i\|_{L^4}^2\\
\le&A_i(t)\left(\sum_j\|\na\underline c_j\|_{L^2}^2+\|\underline u\|_V^2\right)
\eal
\ee
where 
\be
A_i(t)=C(\|\na c_i\|_{L^4}^2+\|c_i\|_{L^4}^2+\|\na\Phi\|_{L^\infty}^2+\|\rho\|_{L^4}^2+\|u\|_{L^\infty}^2)
\ee
is integrable in time due to Lemma \ref{l2!}. Similarly, taking the inner product of \eqref{LS2} with $A\underline u$ and integrating by parts, we obtain
\be
\fr{d}{dt}\|\underline u\|_V^2+\nu\|A\underline u\|_H^2\le A_u(t)\sum_j\|\na \underline c_j\|_{L^2}^2
\ee
where 
\be
    A_u(t)=C(\|\rho\|_{L^4}^2+\|\na\Phi\|_{L^4}^2)
\ee
is also integrable in time due to Lemma \ref{l2!}. Thus, we have
\be
\bal
&\fr{d}{dt}\left(\sum_i\|\na\underline c_i\|_{L^2}^2+\|\underline u\|_V^2\right)+\sum_i D_i\|\D \underline c_i\|_{L^2}^2+\nu\|A\underline u\|_H^2\\
\le& (A_1(t)+A_2(t)+A_u(t))\left(\sum_i\|\na\underline c_i\|_{L^2}^2+\|\underline u\|_V^2\right)
\eal
\ee
from which it follows that for every $t$, $\underline w$ obeys
\be\la{uvb}
\bal
&\sum_i\|\na\underline c_i(t)\|_{L^2}^2+\|\underline u(t)\|_V^2+\int_0^t\sum_i D_i\|\D \underline c_i(s)\|_{L^2}^2+\nu\|A\underline u(s)\|_H^2\,ds\\
\le& \left(\sum_i\|\na\underline c_i(0)\|_{L^2}^2+\|\underline u(0)\|_V^2\right)\exp\left(C\int_0^t (A_1(s)+A_2(s)+A_u(s))\,ds\right)<\infty.
\eal
\ee
The above bound, together with Lemma \ref{l2!} and Sobolev embeddings, establishes (\ref{S'c}). Starting from this continuity estimate and following the same boostrapping procedure as in Steps 3-6 in the proof of Proposition \ref{p1}, we obtain bounds on $\|\underline w(t)\|_{\mH^2}$ in terms of $\|\underline w_0\|_\mV$. These bounds also depend on time $t$ but are finite for each fixed $t$. Thus, for each fixed $t$, $S'(t,w_0)$ maps bounded sets of $\mV_0$ into bounded sets of $\mV_0\cap \mH^2$. This proves (\ref{S'cc}). 

Equipped with the $L^2(0,t;\mH^2)$ bounds established in \eqref{uvb}, we follow the same steps as in the proof of \eqref{ab2} of Proposition \ref{p1} to prove the injectivity of $S'(t,w_0)$.
\end{proof}

In addition to the preceding propositions, a crucial ingredient in proving the finite dimensionality of the global attractor is showing that sufficiently high (but finite) dimensional volume elements of $\mVo$ decay exponentially in time when transported by the linear flow $S'(t;w_0)$, for any $w_0\in \mV^+$. We make this statement more precise. We denote by $\mVo^N=\bigwedge^N\mVo$ the $N$-th exterior product of $\mVo$ with inner product $(\cdot,\cdot)_{\mVo^N}$ defined by
\be
(v_1\wedge\cdots\wedge v_N,w_1\wedge\cdots\wedge w_N)_{\mVo^N}=\det \{(v_i,w_i)_{\mVo}\}_{ij}
\ee
where $\{(v_i,w_i)_{\mVo}\}_{ij}$ is the $N\times N$ matrix with $ij$-th component given by $(v_i,w_i)_{\mVo}$.

Now, fix initial conditions $w_0\in\mV^+$ and a set of small initial perturbations $\tilde w_{1,0},...,\tilde w_{N,0}\in\mVo$. Then, consider the volume of the parallelepiped spanned by $\tilde w_{1,0},...,\tilde w_{N,0}$
\be
V_N(0)=\|\tilde w_{1,0}\wedge\cdots\wedge \tilde w_{N,0}\|_{\mVo^N}.
\ee
We track the time evolution of the above volume element as the perturbations $\tilde w_{i,0}$ evolve under the flow of $S'(t,w_0)$, according to the linear PDE
\be\la{leqcu}
\bal
\pa_t \tilde w_i(t)&=(-\bA+\mL(t))\tilde w_i(t),\quad t>0\\
\tilde w_i(t)_{|\pa\Omega}&=0,\quad t>0\\
\tilde w_i(0)&=\tilde w_{i,0}
\eal
\ee
where above, we denote (c.f. \eqref{LS1}, \eqref{LS2})
\be\la{leq}
\bal
\bA=&(-D_1\D,-D_2\D,\nu A)\\
\mL(t)=&\mL(t;w_0)=(\mL_1(t;w_0),\mL_2(t;w_0),\mL_u(t;w_0)).
\eal
\ee
We note that $\bA^{-1}$ (with homogeneous Dirichlet boundary conditions) is a compact, positive, self-adjoint operator on $\mVo$. It follows from \eqref{leqcu} that the squared volume element $V_N^2(t)=\|\tilde w_1(t)\wedge\cdots\wedge \tilde w_N(t)\|^2_{\mVo}$ evolves according to
\be\la{vv}
\bal
\fr{1}{2}\fr{d}{dt}V_N^2(t)=((-\bA_N+\mL_N)(\tilde w_1(t)\wedge\cdots\wedge\tilde w_N(t)),\tilde w_1(t)\wedge\cdots\wedge\tilde w_N(t))_{\mVo^N}
\eal
\ee
where $\mL_N:\mVo^N\to\mVo^N$ is the extension of $\mL$, defined component-wise (below, we omit the dependencies on time)
\be
\bal
\mL_N(\tilde w_1\wedge\cdots\wedge \tilde w_N)=&((\mL_1\tilde c_{1,1},\mL_2\tilde  c_{2,1},\mL_u\tilde u_1)\wedge\cdots\wedge(\tilde c_{1,N},\tilde c_{2,N},\tilde u_N))+\\
&\cdots\\
&+((\tilde c_{1,1},\tilde  c_{2,1},\tilde u_1)\wedge\cdots\wedge(\mL_1\tilde c_{1,N},\mL_2\tilde  c_{2,N},\mL_u\tilde u_N))
\eal
\ee
and similarly for $\bA_N$.

We now prove the following, which establishes exponential in time decay of volume elements:
\begin{prop}\la{expd}
For initial conditions $w_0\in \mV^+$, there exists $N^*$ depending on parameters and boundary conditions such that for every $N\ge N^*$ and every set of $N$ initial perturbations $\tilde w_{i,0}\in \mVo$, $i=1,...,N$, the corresponding volume element $V_N(t)$ decays exponentially:
\be
V_N(t)\le V_N(0)e^{-Nt},\quad t\ge \bar t
\ee
where $\bar t$ depends on parameters, boundary conditions, and $\|w_0\|_\mV$.
\end{prop}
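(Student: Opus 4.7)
The plan is to derive a Liouville-type identity for $V_N(t)$ and then to show that, once the nonlinear background $w(s)=S(s)w_0$ has entered the absorbing ball $B_R^2$ from Proposition \ref{p1}(\ref{ab}), the trace of $-\bA+\mL(s)$ restricted to the evolving $N$-dimensional subspace $\mathrm{span}\{\tilde w_1(s),\ldots,\tilde w_N(s)\}$ is bounded above by a large negative multiple of $N$, uniformly in $s$, as soon as $N$ exceeds an explicit threshold $N^*$ depending only on parameters and boundary conditions. Starting from \eqref{vv} together with the Gram-determinant structure of the inner product on $\mVo^N$, one obtains the identity
\begin{equation*}
V_N(t)=V_N(0)\exp\left(\int_0^t \tr\bigl((-\bA+\mL(s))Q_N(s)\bigr)\,ds\right),
\end{equation*}
where $Q_N(s)$ denotes the $\mVo$-orthogonal projection onto $\mathrm{span}\{\tilde w_1(s),\ldots,\tilde w_N(s)\}$. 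The claim then reduces to a uniform negative trace bound for $s$ past the absorption time $T_0$, plus a quantitative control of the trace on the initial transient $[0,T_0]$ in terms of $\|w_0\|_\mV$.

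For the lower bound on $\tr(\bA Q_N)$ I would invoke the min--max principle combined with Weyl asymptotics. Since $\bA$ is block-diagonal with entries $-D_1\Delta$, $-D_2\Delta$ on $H_0^1$ and $\nu A$ on $V$, each of which is an unbounded, positive, self-adjoint operator on $\mVo$ with compact inverse, and since the Dirichlet Laplacian and the Stokes operator on a bounded three-dimensional domain both have eigenvalues growing like $k^{2/3}$, the $k$-th eigenvalue $\mu_k$ of $\bA$ satisfies $\mu_k\gtrsim k^{2/3}$. Consequently, for any $\mVo$-orthonormal basis $\{\phi_k\}_{k=1}^N$ of $Q_N(s)$,
\begin{equation*}
\tr(\bA Q_N)=\sum_{k=1}^N(\bA\phi_k,\phi_k)_{\mVo}\ge\sum_{k=1}^N\mu_k\ge c_* N^{5/3},
\end{equation*}
with $c_*>0$ depending only on parameters and $\Omega$. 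For the upper bound on $|\tr(\mL(s)Q_N)|$ I would estimate $|(\mL(s)\phi,\phi)_{\mVo}|$ term by term using integration by parts against $-\Delta\underline c_i$ and $A\underline u$, H\"older's inequality (with splittings such as $L^\infty\cdot L^2\cdot L^2$ or $L^3\cdot L^6\cdot L^2$), three-dimensional Sobolev embeddings, elliptic regularity for $\underline\Phi=\tfrac{1}{\ep}(-\Delta_D)^{-1}\underline\rho$, and Young's inequality to absorb the top-order factors $\|\Delta\underline c_i\|_{L^2}^2+\|A\underline u\|_H^2$ into $(\bA\phi,\phi)_{\mVo}$. The resulting pointwise bound would take the form
\begin{equation*}
|(\mL(s)\phi,\phi)_{\mVo}|\le\tfrac{1}{2}(\bA\phi,\phi)_{\mVo}+M(s)\|\phi\|_{\mVo}^2,
\end{equation*}
where $M(s)$ is a polynomial in Sobolev norms of $w(s)$ that is controlled by $\|w(s)\|_{\mH^2}$. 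Summing over an $\mVo$-orthonormal basis then gives $|\tr(\mL(s)Q_N)|\le\tfrac{1}{2}\tr(\bA Q_N)+NM(s)$.

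Combining the two bounds yields $\tr((-\bA+\mL(s))Q_N)\le-\tfrac{c_*}{2}N^{5/3}+NM(s)$. By Proposition \ref{p1}(\ref{ab}), for all $s\ge T_0=T_0(\|w_0\|_\mH)$ the trajectory $w(s)$ lies in the absorbing ball $B_R^2$, so $M(s)\le \overline M$ with $\overline M$ depending only on parameters and boundary conditions; choosing $N^*$ so that $\tfrac{c_*}{2}(N^*)^{2/3}\ge\overline M+2$ secures $\tr((-\bA+\mL(s))Q_N)\le-2N$ for all $N\ge N^*$ and all $s\ge T_0$. Meanwhile Lemma \ref{l2!} together with Sobolev embeddings gives an $L^1(0,T_0)$ bound $\int_0^{T_0}M(s)\,ds\le C_{w_0}$ depending on $\|w_0\|_\mV$, so that
\begin{equation*}
\int_0^t\tr\bigl((-\bA+\mL(s))Q_N(s)\bigr)\,ds\le N(C_{w_0}+2T_0)-2Nt,\quad t\ge T_0,
\end{equation*}
and setting $\bar t=C_{w_0}+2T_0$ (independent of $N\ge N^*$) yields $V_N(t)\le V_N(0)e^{-Nt}$ for $t\ge\bar t$. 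The principal technical obstacle is the pointwise absorption inequality: carrying it out requires a careful choice of H\"older splittings and Sobolev interpolations in each of the first-order terms of $\mL_i,\mL_u$, so that the second-order factor is absorbed into $(\bA\phi,\phi)_{\mVo}$ with a strictly sub-unit constant and the remaining coefficient $M(s)$ depends only on norms that are uniformly controlled inside the absorbing ball.
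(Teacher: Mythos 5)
Your proposal is correct and follows essentially the same route as the paper: a Liouville-type trace identity for $V_N$ via the projection formula, the spectral lower bound $\lambda_j\gtrsim j^{2/3}$ for the block operator $\bA$, and absorption of the top-order part of $\tr(\mL(s)Q_N)$ into one half of $\tr(\bA Q_N)$ via Young's inequality, leaving a lower-order coefficient $M(s)$ controlled through Lemma \ref{l2!}. The only cosmetic difference is that you split the time axis at the absorption time $T_0$ and invoke the pointwise $B_R^2$ bound from Proposition \ref{p1}(\ref{ab}) for $s\ge T_0$, whereas the paper plugs the time-averaged bound $\frac{1}{t}\int_0^t M(s)\,ds\le k_3+k_4\mF_0 t^{-1}$ from Lemma \ref{l2!} directly into the trace estimate; both yield an $N^*$ depending only on parameters and boundary data and a transient time $\bar t$ depending on $\|w_0\|_\mV$.
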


\begin{proof} We invoke the following formula, proved in \cite{cf'}: for any linear operator $T:\mathcal{D}(T)\subset \mVo\to\mVo$, we have
\be\la{formula}
(T_N(\tilde w_1\wedge\cdots\wedge \tilde w_N),\tilde w_1\wedge\cdots \tilde w_N)_{\mVo^N}=\|\tilde w_1\wedge\cdots\wedge \tilde w_N\|_{\mVo^N}^2\tr(T P(\tilde w_1,...,\tilde w_N))
\ee
where $P(\tilde w_1,...,\tilde w_N)$ is defined to be the orthogonal projection from $\mVo$ onto the subspace spanned by $\tilde w_1,...,\tilde w_N\in \mVo$. It follows from the above formula and \eqref{vv} that $V_N^2(t)=\|\tilde w_1\wedge\cdots\wedge \tilde w_N\|_{\mVo}^2$ satisfies
\be
\fr{1}{2}\fr{d}{dt}V_N^2(t)=V_N^2(t)\tr((-\bA+\mL(t)) P(\tilde w_1(t),...,\tilde w_N(t)))
\ee
so that
\be\la{VN}
V_N^2(t)=V_N^2(0)\exp\left(2\int_0^t\tr((-\bA+\mL(s)) P(\tilde w_1(s),...,\tilde w_N(s)))\,ds\right).
\ee
Therefore, $V_N(t)$ is either identically zero for all $t\ge 0$ or strictly positive for all $t\ge 0$. Thus, without loss of generality, we assume that $V_N(0)>0$ so that $V_N(t)>0$ (and $\tilde w_1(t)\wedge\cdots\wedge \tilde w_N(t)\neq 0)$ for all time. 

To obtain the decay, it suffices to show that for all sufficiently large $N$, depending on parameters and boundary conditions, we have
\be 
\fr{1}{t}\int_0^t\tr(-\bA P(\tilde w_1(s),...,\tilde w_N(s)))\,ds+\fr{1}{t}\int_0^t\tr(\mL(s) P(\tilde w_1(s),...,\tilde w_N(s)))\,ds=:a_N(t)+l_N(t)\le -N
\ee
for all sufficiently large $t$, depending on parameters, boundary conditions, and $\|w_0\|_\mV$. We note that we require the above to hold for all choices of $N$ initial perturbations $\tilde w_{1,0},...,\tilde w_{N,0}\in\mVo$.

To estimate $a_N(t)$, we again use the formula \eqref{formula} to obtain
\be\la{an}
-a_N(t)=\fr{1}{t}\int_0^t\fr{(\bA_N(\tilde w_1(s)\wedge\cdots\wedge \tilde w_N(s)),\tilde w_1(s)\wedge\cdots\wedge\tilde w_N(s))_{\mVo^N}}{\|\tilde w_1(s)\wedge\cdots\wedge\tilde w_N(s)\|_{\mVo^N}^2}\,ds\ge \lambda_1+\cdots+\lambda_N
\ee
where we have denoted the eigenvalues of $\bA$ by $0<\lambda_1\le\lambda_2\le\cdots$ and used the fact that the smallest eigenvalue of $\bA_N$, then, is $\sum_{i=1}^N\lambda_i$.

Next, to estimate $l_N(t)$, for each $t>0$ we fix an orthonormal family $\rw_j(t)=(\rc_{1,j}(t),\rc_{2,j}(t),\ru_j(t))$, $j=1,...,N$ in $\mVo$ spanning the linear span of $\tilde w_1(t),...,\tilde w_N(t)$. We also denote $\rr_j=\rc_{1,j}-\rc_{2,j}$ and $\rp_j=\fr{1}{\ep}(-\D_D)^{-1}\rr_j.$ Then, since
\be
l_N(t)=\fr{1}{t}\int_0^t\sum_{j=1}^N(\mL(s)\rw_j(s),\rw_j(s))_{\mVo}\,ds
\ee
we focus on estimating the integrand in the integral above. We estimate the inner product, component by component, omitting below the dependence on time, 
\be
\bal
\left|\sum_{j=1}^N(\mL_i \rc_{i,j},\D\rc_{i,j})_{L^2}\right|
\lesssim&\sum_{j=1}^N\|\D \rc_{i,j}\|_{L^2}\times(\|\na c_i\|_{L^2}\|\na\rp_j\|_{L^\infty}+\|c_i\|_{L^6}\|\rr_j\|_{L^3}\\
&+\|\na \rc_{i,j}\|_{L^2}\|\na\Phi\|_{L^\infty}+\|\rc_{i,j}\|_{L^6}\|\rho\|_{L^3}+\|u\|_{L^\infty}\|\na\rc_{i,j}\|_{L^2}+\|\ru_j\|_{L^6}\|\na c_i\|_{L^3})\\
\le&\fr{D_i}{2}\sum_{j=1}^N\|\D\rc_{i,j}\|_{L^2}^2+C\sum_{j=1}^N\left(\sum_{k=1}^2\|\na\rc_{k,j}\|_{L^2}^2+\|\ru_j\|_V^2\right)b_i(t)\\
\le&\fr{D_i}{2}\sum_{j=1}^N\|\D\rc_{i,j}\|_{L^2}^2+CNb_i(t)
\eal
\ee
where
in the last line we used the fact that $\rc_{k,j}$ and $\ru_j$ are each orthonormal in $H_0^1$ and $V$, respectively, and we denote
\be
b_i(t)=\|\na c_i\|_{L^2}^2+\|c_i\|_{L^6}^2+\|\na\Phi\|_{L^\infty}^2+\|\rho\|_{L^3}^2+\|u\|_{L^\infty}^2+\|\na c_i\|_{L^3}^2.
\ee
We also point out that the final constant $C$ does not depend on $N$.

Similarly, we estimate
\be
\bal
\left|\sum_{j=1}^N(\mL_u\ru_j, A\ru_j)_H\right|\lesssim&\sum_{j=1}^N\|A\ru_j\|_H\left(\|\rho\|_{L^3}\|\na\rp_j\|_{L^6}+\|\rr_j\|_{L^3}\|\na\Phi\|_{L^6}\right)\\
\le& \fr{\nu}{2}\sum_{j=1}^N\|A\ru_j\|_H^2+C\sum_{j=1}^N\left(\sum_{k=1}^2\|\na \rc_{k,j}\|_{L^2}^2\right)b_u(t)\\
\le&\fr{\nu}{2}\sum_{j=1}^N\|A\ru_j\|_H^2+CNb_u(t)
\eal
\ee
where
\be
b_u(t)=\|\rho\|_{L^3}^2+\|\na\Phi\|_{L^6}^2.
\ee

Thus we have that
\be\la{EQ}
\bal
l_N(t)\le& \fr{1}{t}\int_0^t\sum_{j=1}^N\left(\fr{D_1}{2}\|\D\rc_{1,j}(s)\|_{L^2}^2+\fr{D_2}{2}\|\D\rc_{2,j}(s)\|_{L^2}^2+\fr{\nu}{2}\|A\ru_j(s)\|_H^2\right)\,ds\\
&+CN\fr{1}{t}\int_0^tb_1(s)+b_2(s)+b_u(s)\,ds.
\eal
\ee
At this point, let us note that due the formula \eqref{formula} again, we have
\be
\bal
\fr{(\bA_N(\tilde w_1\wedge\cdots\wedge \tilde w_N),\tilde w_1\wedge\cdots\wedge\tilde w_N)_{\mVo^N}}{\|\tilde w_1\wedge\cdots\wedge\tilde w_N\|_{\mVo^N}^2}=&\tr(\bA P(\tilde w_1,...,\tilde w_N))\\
=&\sum_{j=1}^N\left(\sum_{i=1}^2(D_i\D \rc_{i,j},\D\rc_{i,j})_{L^2}+(\nu A\ru_j,A\ru_j)_H\right)\\
=&\sum_{j=1}^N\left(\sum_{i=1}^2D_i\|\D \rc_{i,j}\|_{L^2}^2+\nu\|A\ru_j\|_H^2\right).
\eal
\ee
It follows from this equation, \eqref{EQ} and \eqref{an} that
\be\la{lN}
l_N(t)\le -\fr{1}{2}a_N(t)+CN\fr{1}{t}\int_0^t b_1(s)+b_2(s)+b_u(s)\,ds.
\ee
Thus, using \eqref{an} and \eqref{lN}, we have
\be\la{ll}
\bal
a_N(t)+l_N(t)\le& \fr{1}{2}a_N(t)+CN\fr{1}{t}\int_0^tb_1(s)+b_2(s)+b_u(s)\,ds\\
\le&-\fr{1}{2}(\lambda_1+\cdots+\lambda_N)+CN\fr{1}{t}\int_0^tb_1(s)+b_2(s)+b_u(s)\,ds.
\eal
\ee
Now we use the fact that
\be\la{evlb}
\lambda_j\ge cj^\fr{2}{3} \text{ for all } j\ge 1
\ee
for some constant $c>0$ (see Remark \ref{ev}) to conclude that
\be\la{aN}
a_N(t)+l_N(t)\le-k_1N^\fr{5}{3}+k_2N\fr{1}{t}\int_0^tb_1(s)+b_2(s)+b_u(s)\,ds
\ee
for some constants $k_1, k_2$ independent of $N$. Now, using Lemma \ref{l2!} together with Sobolev embeddings, we find that
\be\la{aN'}
a_N(t)+l_N(t)\le-k_1N^\fr{5}{3}+N(k_3+k_4\mF_0t^{-1})=-N(k_1N^\fr{2}{3}-k_3-k_4\mF_0t^{-1})
\ee
for constants $k_3, k_4$ depending only on parameters and boundary conditions. Thus, for all $t\ge \bar t=k_4\mF_0 k_3^{-1}$ and for all 
\be\la{N*}
N\ge N^*= \left\lceil((1+2k_3)k_1^{-1})^\fr{3}{2}\right\rceil
\ee
we have
\be
a_N(t)+l_N(t)\le -N.
\ee
This completes the proof.
\end{proof}

\begin{rem}\la{ev}
Lower bounds of the type \eqref{evlb} are well known for the Dirichlet eigenvalues of the operators $-\D$ and $A$ on three dimensional bounded domains (see for example \cite{cf}). Here we are using the fact that such lower bounds carry over to the product operator $\bA$, as is shown in \cite{abdo} by a counting argument.
\end{rem}

We now have all the ingredients to prove Theorem \ref{fdf}:

\begin{proof}
Using the same argument as in the proof of Theorem 14.15 in \cite{cf}, the conclusion of Theorem \ref{fdf} follows from the properties of the linear map $S'$ (Propositions \ref{ctsdiff} and \ref{S'}) and the exponential in time decay of sufficiently high dimensional volume elements under the flow of $S'$ (Proposition \ref{expd}). We refer the reader to \cite{cf} for all the details; here, we discuss only the main ideas of the proof. We fix small $r$, and we first cover $\mA$ with a finite number of balls $B(v_i,r_i)$, $i=1,...,k$ (here $B(v_i,r_i)$ denotes the ball in $\mV$, centered at $v_i$ with radius $r_i\le r$). Then, since $\mA\subset \mV_\gamma^+= \{(f,g,h)\in \mV^+\,|\, f_{|\pa\Omega}=\gamma_1, g_{|\pa\Omega}=\gamma_2\}$, we have
\be
\mA\subset \bigcup_{i=1}^k (B(v_i,r_i)\cap \mV_\gamma^+)
\ee
and from the invariance of $\mA$ it follows that
\be\la{sc!}
\mA\subset \bigcup_{i=1}^kS(t)(B(v_i,r_i)\cap \mV_\gamma^+)
\ee
for all $t\ge 0$. Next, using the fact that, for each $t$ and each $v_0\in\mA$, $S'(t,v_0)$ is a compact operator on $\mVo$, we define
\be
M(t,v_0)=\sqrt{S'(t,v_0)^*S'(t,v_0)}
\ee
where the superscript $*$ denotes the adjoint operator. Then, $M(t,v_0)$ is a compact, self-adjoint, nonnegative operator on $\mVo$. Furthermore, it is injective, a fact that follows from the injectivity of $S'(t,v_0)$. Thus $M(t,v_0)$ has a sequence of positive eigenvalues $m_j(t,v_0)$, counted with multiplicity, nonincreasing in $j$, and converging to $0$ as $j\to \infty$. The collection of orthonormal eigenvectors corresponding to the eignevalues $\{m_j(t,v_0)\}$ form an orthonormal basis for $\mVo$.

Now the main idea is to exploit the fact that the collection of ``balls" $\{B(v_i,r_i)\cap \mV_\gamma^+\}$, when transported by $S(t)$, \textit{for any} $t$, still comprises a cover for $\mA$, as per \eqref{sc!}. We fix $N^*>0$ large, from Proposition \ref{expd}, and we also fix $t$ sufficiently large (the largeness of $t$ required comes out of the proof). Then, assuming that $r$ is small enough, it follows from Proposition \ref{ctsdiff} that up to an error of order $rm_{N^*+1}(t,v_0)$, each ball $B(v_i,r_i)\cap \mV_\gamma^+$ is distorted by the map $S(t)$ into a set contained in an $N^*$-dimensional ellipsoid with semi axes of lengths $rm_j(t,v_0)$, $j=1,.,,,N^*$. We then make use of the fact that, due to the choice of $N^*$, both the volumes of these ellipsoids (which are on the order of $(2r)^{N^*}\Pi_{j=1}^{N^*}m_j(t,v_0))$ and the magnitude of the error $\approx rm_{N^*+1}(t,v_0)$ may be taken to be small for all $t$ large enough. This fact is a consequence of the exponential decay of volume elements, Proposition \ref{expd}.

 Then, having started with the $\mA$-cover $\{B(v_i,r_i)\}$, we may obtain a new cover of $\mA$, consisting of smaller balls, by the following procedure: to each $S(t)(B(v_i,r_i)\cap \mV_\gamma^+)$ corresponds an $N^*$-dimensional ellipsoid, which approximately covers it; we cover each of the $N^*$-dimensional ellipsoids by small balls of radius at most $\bar c r$ with $\bar c\in (0,1)$ and then uniformly dilate them so that this collection of dilated balls covers $\mA$. Because the error size $\approx rm_{N^*+1}(t,v_0)$ is small, a small dilation suffices, and we may ensure that the new collection of balls consists of balls strictly smaller than those in the original collection $\{B(v_i,r_i)\}$ - say, radius at most $cr$, with $c\in(\bar c,1)$ (in the precise proof, both $c$ and $\bar c$ are appropriate functions of $m_{N^*+1}(t,v_0)$; c.f. \cite{cf}).

An upper bound on the number of balls of radii at most $\bar c r$ needed to cover each $N^*$-dimensional ellipsoid is given as a function of the volume of these ellipsoids $\approx (2r)^{N^*}\Pi_{j=1}^{N^*}m_j(t,v_0)$ and the radius $\bar c r$. Then the product of the resulting upper bound with $n_\mA(r)$ (c.f. \eqref{nK}) gives an upper bound on $n_\mA(cr)$, a fact that follows from the procedure described in the preceding paragraph.

In the above considerations, we may take $r$ smaller and smaller, so that ultimately, we obtain information on the limiting behavior of the function $s\mapsto n_\mA(s)$ as $s\to 0^+$. Specifically, we conclude that for sufficiently large $D$, in terms of $N^*$ (c.f. \eqref{N*}), the function $s\mapsto s^Dn_\mA(s)$ satisfies $\limsup\limits_{s\to 0^+}s^Dn_\mA(s)=0$. In fact, the following choice of $D$ is sufficient:
\be
D= (2k_3+1)N^*
\ee
where $k_3$ is from \eqref{aN}. Thus we conclude (c.f. \eqref{fddef})
\be
d_f(\mA)\le (2k_3+1)N^*.
\ee
\end{proof}

\section{Space-time Averaged Electroneutrality}\la{EN}
In this section, we investigate the long time behavior of the local charge density $\rho$ in the singular limit of $\epsilon \to 0$. We establish electroneutrality $|\rho|\ll 1$ in a space-time averaged sense, for large times and small $\epsilon$.
\begin{thm}\la{aen}
For any global smooth solution of \eqref{np}-\eqref{stokes} with boundary conditions \eqref{gamma}-\eqref{noslip}, there exist a time $T^\ep$ (depending on parameters and boundary and initial data) and a constant $B_1$ (depending only on boundary data and parameters but not on initial data or $\ep$) such that for any $\tau\ge\epsilon^\fr{2}{3}$ and any $T\ge T^\ep$, we have
\be
\fr{1}{\tau}\int_T^{T+\tau}\left(\int_\Omega \rho_\ep^2(s,x)\,dx\right)\,ds\le B_1\ep^\fr{1}{3}.\la{BB1}
\ee
In particular, we have
\be
\limsup_{t\to\infty}\fr{1}{t}\int_0^t\left(\int_\Omega \rho_\ep^2(s,x)\,dx\right)\,ds\le B_1\ep^\fr{1}{3}.\la{BB2}
\ee\la{lten}
\end{thm}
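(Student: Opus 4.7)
The core of the argument is the three-dimensional interpolation inequality
\be
\|\rho\|_{L^2}^2\le C\|\rho\|_{H^{-1}}^{2/3}\|\rho\|_{L^3}^{4/3},
\ee
which follows by scaling (the homogeneous Sobolev degrees of $H^{-1}$, $L^2$, $L^3$ on $\Rr^3$ are $-5/2,-3/2,-1$, so $L^2$ sits one-third of the way from $L^3$ to $H^{-1}$). The plan is to bound $\|\rho(t)\|_{H^{-1}}$ by $O(\sqrt{\ep})$ and $\|\rho(t)\|_{L^3}$ by an $\ep$-independent constant, both for all sufficiently large $t$, and then combine via H\"older in time.

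For the $H^{-1}$ bound I would use Lemma \ref{l1}: for $t\ge T_1(\|w_0\|_\mH)$ one has $\mathcal F(t)\le R_0$ with $R_0$ depending only on parameters and boundary data, and since $\mathcal F\ge \mathcal P=\fr{1}{2\ep}\|\rho\|_{H^{-1}}^2$, this yields $\|\rho(t)\|_{H^{-1}}^2\le 2\ep R_0$. For the $L^3$ bound I would invoke the pointwise $L^\infty$ absorbing-ball result from \cite{NPS} described in the introduction (Theorem \ref{maxthm}): for $t$ past some $T_{\max}$ depending only on parameters, boundary data, and $\|w_0\|_{L^\infty}$, one has $\|c_i(t)\|_{L^\infty}\lesssim 1$ with constants determined by $\gamma_i$ alone, whence $\|\rho(t)\|_{L^3}\le C$ uniformly in $\ep$ and in the initial data (beyond $\|w_0\|_{L^\infty}$).

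Setting $T^\ep:=\max\{T_1,T_{\max}\}$ and applying the interpolation together with H\"older in time, for any $T\ge T^\ep$ and $\tau>0$ we obtain
\be
\int_T^{T+\tau}\|\rho\|_{L^2}^2\,ds\le C\int_T^{T+\tau}\|\rho\|_{H^{-1}}^{2/3}\|\rho\|_{L^3}^{4/3}\,ds\le C(2\ep R_0)^{1/3}\int_T^{T+\tau}\|\rho\|_{L^3}^{4/3}\,ds\le B_1\ep^{1/3}\tau,
\ee
which is \eqref{BB1}. The long-time average \eqref{BB2} follows by splitting $[0,t]$ at $T^\ep$, bounding the initial portion by a constant depending on initial data (using global regularity, Theorem \ref{twospecies}), applying \eqref{BB1} on $[T^\ep,t]$, and sending $t\to\infty$ so that the initial piece contributes nothing to the $\limsup$.

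The main obstacle is tracking the $\ep$-dependence of the absorbing-ball radius $R_0$ in Lemma \ref{l1}. Since $\mathcal F$ contains the factor $1/\ep$ in $\mathcal P$, one must revisit the derivation of \eqref{Fineq} in \cite{cil} and confirm that $R_0$ and $C_4$ can be chosen independently of $\ep$; only then does the pointwise bound $\mathcal F\le R_0$ yield the usable $\|\rho\|_{H^{-1}}^2\lesssim\ep$. The constraint $\tau\ge \ep^{2/3}$ in the statement is not forced by the outline above (which leverages the pointwise $L^\infty$ control of $\rho$); it likely reflects an alternative route that instead uses only the time-integrated bound $\int_T^{T+\tau}\|\rho\|_{L^3}^3\le C(\mathcal F(T)+\tau)$ from Lemma \ref{l1}, where $\tau$ must dominate $\mathcal F(T)\lesssim R_0$ to close the final H\"older step.
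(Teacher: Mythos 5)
Your argument hinges on the interpolation inequality $\|\rho\|_{L^2}^2\le C\|\rho\|_{H^{-1}}^{2/3}\|\rho\|_{L^3}^{4/3}$, but this inequality is false. The scaling count you give only constrains the exponents; it does not establish the estimate. For a counterexample, take $\rho_n(x)=\chi(x)\sin(n x_1)$ with a fixed $\chi\in C_c^\infty(\Omega)$, $\chi\not\equiv 0$: as $n\to\infty$, $\|\rho_n\|_{L^2}$ and $\|\rho_n\|_{L^3}$ remain bounded away from zero, while $\|\rho_n\|_{H^{-1}}\lesssim n^{-1}$ (integrate by parts once against any $\phi\in H_0^1$), so the right-hand side tends to $0$ while the left does not. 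Structurally, $L^3$ supplies extra integrability but no extra smoothness, so it cannot recover the derivative lost in passing from $H^{-1}$ up to $L^2$; the natural interpolation partner for $H^{-1}$ to reach $L^2$ would be an $H^1$-type norm on $\rho$, which the $\ep$-uniform estimates do not supply. A second warning sign, which you half-noticed yourself, is that your argument would produce a pointwise-in-time bound $\|\rho(t)\|_{L^2}^2\lesssim\ep^{1/3}$ for all large $t$, strictly stronger than the space-time averaged conclusion of the theorem, and would render the hypothesis $\tau\ge\ep^{2/3}$ superfluous; the theorem is not stated that way for a reason.

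The paper instead produces the $\fr{1}{\ep}\int_\Omega\rho^2$ factor as a genuine dissipation term for a relative entropy: it multiplies the Nernst--Planck equations by $\log(c_i/\Gamma_i)$ (with $\Gamma_i$ the harmonic extension of $\gamma_i$), integrates by parts, and uses that $-\sum_i z_i\int_\Omega\na c_i\cdot\na\Phi\,dx$ contributes $-\fr{1}{\ep}\int_\Omega\rho^2\,dx$ up to boundary terms. Those boundary terms are handled by subtracting a cutoff extension $\Gamma_i^\ep$ supported in an $\ep^{1/3}$-thin boundary layer, which makes the associated errors scale like $\ep^{-2/3}$; the cross term $\int_\Omega\na\Gamma_i^\ep\cdot\na\Phi\,dx$ is controlled using $\|\na\Phi\|_{L^2}\lesssim\ep^{-1/2}$ from Proposition \ref{phib}, which is exactly the $H^{-1}$ smallness you correctly identified via $\mathcal{P}\le\mathcal{F}\le R_0$ in Lemma \ref{l1}, and the pointwise $c_i$ bounds of Theorem \ref{maxthm} enter throughout. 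Integrating in time over a window of length $\tau\ge\ep^{2/3}$ balances the $\ep^{-2/3}$ errors against the $\ep^{-1}$ dissipation coefficient and absorbs the one-time entropy endpoint terms, yielding the $\ep^{1/3}$ averaged bound; this is precisely where the $\tau\ge\ep^{2/3}$ constraint comes from. Your two ingredients --- the $H^{-1}$ control from the energy and the $L^\infty$ control from the maximum principle --- are both correct and both appear in the paper's proof, but they must be routed through the equation's entropy structure and a time integration, not through a static interpolation inequality.
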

Above, the subscript $\ep$ is denoted to emphasize the dependence of solutions on $\ep$. The proof of the theorem makes essential use of the following $L^\infty$ absorbing ball property proved in \cite{NPS}.
\begin{thm}\la{maxthm}
Suppose $(c_1,c_2 ,u)$ is a global smooth solution of \eqref{np}-\eqref{stokes} with boundary conditions \eqref{gamma}-\eqref{noslip}. Then, for all $\delta>0$, there exists $T_\delta$ depending on $\delta$, initial and boundary data, and parameters such that for all $t\ge T_\delta$ we have 
    \be\ug-\delta\le \um(t)\le\om(t)\le\og+\delta\ee
    where $\ug=\min_i\inf_{\pa\Omega}\gamma_i$ and $\og=\max_i\sup_{\pa\Omega}\gamma_i$, and
    \begin{align}\om(t)=\max_i\sup_{x\in\Omega} c_i(t,x),\quad \um(t)=\min_i\inf_{x\in\Omega} c_i(t,x).\end{align}
\end{thm}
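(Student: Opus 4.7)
The plan is to prove Theorem \ref{maxthm} in two stages. First, a classical pointwise maximum principle shows that $M(t) := \max_i \sup_\Omega c_i(t,\cdot)$ is nonincreasing on $\{M > \og\}$ (and symmetrically $m(t) := \min_i \inf_\Omega c_i$ is nondecreasing on $\{m < \ug\}$). Second, a compactness/invariance argument on the global attractor $\mA$ from Theorem \ref{globa} upgrades this monotonicity to the sharper attraction $M(t) \to \og$ and $m(t) \to \ug$.

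For the first stage, I would exploit the joint-maximum structure. By Proposition \ref{p1}(\ref{ab}), after the absorbing time $T_0$ we have $c_i \in H^2 \hookrightarrow C^0(\bar\Omega)$ uniformly in $t$, so $M(t)$ is continuous. When $M(t) > \og$, the sup is attained at an interior point $x_*$ with $c_i(t, x_*) = M(t)$ for some species $i$, and by joint maximality $c_i(x_*) \ge c_j(x_*)$ for $j \ne i$. Using $\nabla c_i(x_*) = 0$ and Poisson's equation $\Delta \Phi = -(c_1 - c_2)/\ep$ to evaluate $D_i\div(z_ic_i\nabla\Phi) = D_iz_ic_i\Delta\Phi$ at $x_*$, the NPS equation reduces to
\be
\pa_t c_i(t, x_*) = D_i\Delta c_i(t, x_*) - \fr{D_i}{\ep}c_i(x_*)(c_i - c_j)(x_*) \le 0,
\ee
since $z_i(c_1-c_2) = c_i - c_j$ (with $j \ne i$) is $\ge 0$ by joint maximality, and $\Delta c_i(x_*) \le 0$, $c_i(x_*) \ge 0$. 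A Hamilton-type Dini-derivative argument for suprema of smooth families then yields $\fr{d^+}{dt}M(t) \le 0$ on $\{M(t) > \og\}$, and combined with the eventual $L^\infty$ bound from Proposition \ref{p1}(\ref{ab}), one obtains $M(t) \to M_\infty \ge \og$.

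For the second stage I would argue by contradiction, supposing $M_\infty > \og$. By the compact embedding of $B_R^2 \subset \mH^2$ into $\mV \hookrightarrow C^0(\bar\Omega)$, the time-shifts $(c_1, c_2, u)(\cdot + t_n)$ along a sequence $t_n \to \infty$ sub-converge to a complete bounded orbit $w^* = (c_1^*, c_2^*, u^*)$ on $\mathbb{R}$ lying in $\mA$. Setting $M_\infty = \sup_\mA M$ and combining with the monotonicity above forces $M^*(t) \equiv M_\infty$ for all $t \le 0$. Along this orbit, at each moving interior max $x_*(t)$ the identity $\fr{d}{dt}c_i^*(t, x_*(t)) = 0$ combined with the pointwise inequality above forces both $c_j^*(t, x_*(t)) = M_\infty$ and $\Delta c_i^*(t, x_*(t)) = 0$. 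The nonnegative function $\phi := M_\infty - c_i^*$ then attains its interior minimum zero at $x_*(t)$, and a direct computation shows $\phi$ satisfies a linear parabolic equation $\pa_t\phi - D_i\Delta\phi + b\cdot\nabla\phi + c\phi = f$ with drift $b = u^* - D_iz_i\nabla\Phi^*$, zeroth-order term $c = D_iz_i\rho^*/\ep$, and source $f = D_iz_iM_\infty\rho^*/\ep$, all bounded on $\mA$. Since $\rho^*(x_*(t)) = 0$, the strong parabolic maximum principle together with unique continuation then force $\phi \equiv 0$ throughout $\Omega$ at some $t \le 0$, contradicting $\phi = M_\infty - \gamma_i \ge M_\infty - \og > 0$ on $\pa\Omega$.

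The principal obstacle will be making the strong-max/unique-continuation step rigorous because the source $f$ in the linear equation for $\phi$ has no definite sign near $x_*(t)$. I would overcome this either via a barrier correction $\phi_\eta = \phi + \eta\psi$ with $\psi > 0$ chosen to render $\phi_\eta$ a supersolution, then sending $\eta \to 0^+$ after applying the standard strong maximum principle, or by extracting an additional time-translate limit that is a backward steady state (in which case $\pa_t c_i^* = 0$ reduces the problem to an elliptic strong maximum principle, which applies directly to $\phi \ge 0$ with interior zero and strictly positive boundary values). The symmetric argument for $m(t) \to \ug$ uses $c_i \ge 0$ and $\Delta c_i(y_*) \ge 0$ at interior minima, and $T_\delta$ is chosen large enough that both $M(t) \le \og + \delta$ and $m(t) \ge \ug - \delta$ hold simultaneously.
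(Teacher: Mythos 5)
The paper itself does not prove this theorem; it quotes it from \cite{NPS}, where the attraction to $[\ug,\og]$ is obtained directly from the maximum principle and parabolicity of \eqref{np}, without any attractor machinery. Your Stage 1 reproduces the standard pointwise part correctly: at an interior joint maximum $x_*$ one has $\pa_t c_i(t,x_*)=D_i\D c_i(x_*)-\fr{D_i}{\ep}c_i(x_*)(c_i-c_j)(x_*)\le 0$, and the Dini-derivative argument gives monotonicity of $\max\{M(t),\og\}$. (Minor slip: $\mV=H^1\times H^1\times V$ does \emph{not} embed in $C^0(\bar\Omega)$ in three dimensions; you should extract continuity and the convergence of sup-norms from the $\mH^2$ absorbing ball, e.g.\ via the compact embedding $H^2\hookrightarrow\hookrightarrow C^0(\bar\Omega)$.) Using Theorem \ref{globa} is heavier than needed but not circular in this paper's logical order, so the overall strategy is admissible.

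The genuine gap is exactly the step you flag in Stage 2, and neither of your proposed remedies closes it. Writing $\phi=M_\infty-c_i^*$, the equation is $\pa_t\phi+(u^*-D_iz_i\na\Phi^*)\cdot\na\phi-D_i\D\phi=\fr{D_i}{\ep}c_i^*(c_i^*-c_j^*)$, whose right-hand side is of order one and sign-indefinite away from $x_*(t)$. Remedy (a) cannot work as stated: adding $\eta\psi$ with $\eta\to0^+$ can only absorb perturbations that are $O(\eta)$, not a fixed unsigned source. Remedy (b) is unjustified: for nonequilibrium boundary conditions there is no known gradient/Lyapunov structure forcing time-translates of a complete bounded orbit in $\mA$ to converge to a steady state, and even if one had a steady state the elliptic version of the equation carries the same unsigned source, so nothing is gained. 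The step can, however, be repaired without new machinery: on the limiting orbit $c_j^*\le M_\infty$ and $0\le c_i^*\le M_\infty$, hence $c_i^*(c_i^*-c_j^*)\ge -c_i^*\,(M_\infty-c_i^*)\ge -M_\infty\phi$, so $\phi\ge0$ satisfies $\pa_t\phi+b\cdot\na\phi-D_i\D\phi+\fr{D_iM_\infty}{\ep}\phi\ge0$; multiplying by $e^{D_iM_\infty t/\ep}$ removes the zeroth-order term's sign issue and the classical strong minimum principle applied at the interior zero $(t,x_*(t))$ forces $\phi\equiv0$ in $\Omega$ at earlier times, contradicting $\phi\ge M_\infty-\og>0$ on $\pa\Omega$ (the symmetric fix, using $c_j^*\ge m_\infty$ and the $L^\infty$ bound on $\mA$, handles the lower barrier). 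With that observation, and a brief justification that elements of $\mA$ generate solutions smooth enough for these classical pointwise arguments, your outline becomes a complete proof, though it remains a different and less elementary route than the direct parabolic argument of \cite{NPS}.
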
 

\noindent In particular, the theorem implies that for large times, $c_i$ obeys pointwise lower and upper bounds independent of $\ep$ and of initial data. One immediate consequence of this theorem, together with the Poisson equation \eqref{pois} and elliptic regularity, is the fact that, for all large times,
\be
\|\na \Phi(t)\|_{L^2}\le \fr{C}{\ep}.
\ee
Here, and for the remainder of the section, $C$ denotes a constant that does not depend on initial conditions \textit{nor} on $\ep$. This constant may differ from line to line.

Below, we show that by exploiting the dissipative structure of the NPS system, it is possible to improve on the $\ep$ dependence by a factor of $\ep^{-\fr{1}{2}}$:

\begin{prop}\la{phib}
For any global smooth solution of \eqref{np}-\eqref{stokes} with boundary conditions \eqref{gamma}-\eqref{noslip}, there exists a time $T^\ep$ depending on parameters and initial and boundary data such that for some $B_2$ independent of $\ep$ and of initial data, we have for all $t\ge T^\ep$
\be\la{phil2}
\|\na \Phi(t)\|_{L^2}\le\fr{B_2}{\ep^\fr{1}{2}}.
\ee
\end{prop}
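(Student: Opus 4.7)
The strategy is to reduce the proposition to an $\ep$-independent bound $\mathcal{P}(t)\le C$ for the functional $\mathcal{P} = \fr{1}{2\ep}\int_\Omega \rho(-\D_D)^{-1}\rho\,dx$ from Lemma~\ref{l1}, and to establish this bound by combining a fast-dissipation differential inequality for $\mathcal{P}$ (derived from the NPS dynamics using the lower pointwise bound on $c_i$ from Theorem~\ref{maxthm}) with standard energy estimates carried out under the $\ep$-independent $L^\infty$ absorbing ball of that same theorem. The reduction is immediate: writing $\Phi = \Phi_0 + \bar\Phi$ with $\bar\Phi$ the harmonic extension of $W$ and $\Phi_0$ zero-trace (so $-\ep\D\Phi_0 = \rho$ and $\mathcal{P} = \fr{\ep}{2}\|\na\Phi_0\|_{L^2}^2$), one has $\|\na\Phi\|_{L^2}^2 \le \fr{4\mathcal{P}}{\ep} + 2\|\na\bar\Phi\|_{L^2}^2$, with $\|\na\bar\Phi\|_{L^2}$ controlled by the boundary data alone.

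First I would derive the differential inequality for $\mathcal{P}$. Using $\fr{d}{dt}\mathcal{P} = \int\Phi_0\pa_t\rho\,dx$ together with the evolution equation for $\rho$ obtained by subtracting the two equations in \eqref{np}, integration by parts (valid because $\Phi_0|_{\pa\Omega} = 0$, $u|_{\pa\Omega} = 0$, and $\D\bar\Phi = 0$) produces an identity with the nonnegative term $\int_\Omega(D_1 c_1 + D_2 c_2)|\na\Phi_0|^2\,dx$ on the left. By Theorem~\ref{maxthm}, for $t\ge T_\delta$ one has $D_1 c_1 + D_2 c_2 \ge c_0 > 0$ with $c_0$ independent of $\ep$ and of initial data, so this dissipation dominates $\fr{2c_0}{\ep}\mathcal{P}$. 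Absorbing the remaining cross terms via Young's inequality, using the $\ep$-independent $L^\infty$ bounds on $c_i$ and hence on $\rho$, I expect to obtain
\beg{equation*}
\fr{d}{dt}\mathcal{P} + \fr{c_0}{\ep}\mathcal{P} \le K_1\bigl(\|u\|_V^2 + \textstyle\sum_i\|\na c_i\|_{L^2}^2 + 1\bigr)
\end{equation*}
with $K_1$ independent of $\ep$.

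Next I would close the system by combining with standard energy estimates for $q_i := c_i - \Gamma_i$ (where $\Gamma_i$ is the harmonic extension of $\gamma_i$) and for $u$. These computations, again using the $L^\infty$ bounds and then invoking $\|\na\Phi\|_{L^2}^2 \le 4\mathcal{P}/\ep + C$ on the forcing, yield $\ep$-independent inequalities of the form
\beg{equation*}
\fr{d}{dt}\bigl(\textstyle\sum_i\|q_i\|_{L^2}^2 + \|u\|_H^2\bigr) + c_1\bigl(\sum_i\|\na q_i\|_{L^2}^2 + \|u\|_V^2\bigr) \le \fr{C'}{\ep}\mathcal{P} + C.
\end{equation*}
Defining $\mH := \mathcal{P} + \beta\bigl(\sum_i\|q_i\|_{L^2}^2 + \|u\|_H^2\bigr)$ for a well-chosen $\beta>0$ (large enough that $\beta c_1 > K_1$, so that the NP--Stokes dissipation absorbs the $K_1$-forcing of the $\mathcal{P}$-inequality, and small enough that $\beta C' < c_0$, so that the fast $\mH{}^{-1}$--style $\mathcal{P}$-dissipation absorbs the pulled-in $\beta C'\mathcal{P}/\ep$ term), the combined functional should satisfy $\fr{d}{dt}\mH + \eta'\mH \le C$ for $\ep$-independent constants $\eta',C>0$, and Grönwall then delivers $\mH(t)\le C$---hence $\mathcal{P}(t)\le C$---for all $t$ exceeding a time $T^\ep$ that depends on the initial data only through the decay of the transient.

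The main obstacle is the compatibility condition $K_1C' < c_0 c_1$ for the existence of a suitable $\beta$: this is a quantitative relation between constants produced by Young's inequalities, and need not hold for the naive estimates. If it fails, one must sharpen the $\mathcal{P}$-inequality---for instance by absorbing the boundary term $\int (D_1 c_1 + D_2 c_2)\na\bar\Phi\cdot\na\Phi_0\,dx$ against the full weighted dissipation $\int(D_1 c_1 + D_2 c_2)|\na\Phi_0|^2\,dx$ rather than the coarser $c_0\|\na\Phi_0\|_{L^2}^2$, thereby shrinking $K_1$. Alternatively, one can iterate the $\mathcal{P}$-inequality over short time windows of length $O(\ep)$, exploiting the very fast exponential decay at rate $c_0/\ep$ to damp the pulled-in forcing contribution at each step.
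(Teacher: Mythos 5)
Your strategy is genuinely different from the paper's. The paper works with a relative entropy $\mathcal{E}=\sum_i\int_\Omega c_i^*\psi(c_i/c_i^*)\,dx+\tfrac{\ep}{2}\|\na(\Phi-\Phi^*)\|_{L^2}^2+\tfrac{1}{2K}\|u\|_H^2$ computed against a \emph{steady state} $(c_i^*,\Phi^*)$ of the uncoupled Nernst--Planck system, rewrites the equations in terms of the electrochemical potentials $\mu_i=\log c_i+z_i\Phi$, obtains a single differential inequality whose dissipation is the Fisher information $\sum_i\tfrac{D_i}{2}\int c_i|\na(\mu_i-\mu_i^*)|^2\,dx$, and closes it with a log-Sobolev inequality \eqref{LS}. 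Because the energy and the dissipation are ``matched'' by construction (the Fisher information is exactly what falls out when you test the $\mu_i$-form of the equations against $\mu_i-\mu_i^*$), no balancing of Young's-inequality constants is needed: the log-Sobolev constant is $\ep$-independent because it only sees the $L^\infty$ lower/upper bounds on $c_i,c_i^*$. You instead work with the functional $\mathcal{P}=\tfrac{1}{2\ep}\int\rho(-\D_D)^{-1}\rho\,dx$ and $L^2$-type energies for $q_i$ and $u$, for which the cross-coupling to $\mathcal{P}$ does not cancel and must be absorbed by hand.

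The gap you identify is real, and I do not think your two proposed workarounds close it. Tracing the constants: in the $\mathcal{P}$-inequality, the $\rho u\cdot\na\Phi_0$ and $\na\Phi_0\cdot\na(D_1c_1-D_2c_2)$ terms both require a Young split against the dissipation, producing $K_1\sim\og^2/c_0\sim\og^2/(\ug(D_1+D_2))$. In the $q_i$/$u$-inequality, the forcing $\rho\na\Phi$ and the cross term $D_iz_i\int c_i\na\Phi\cdot\na q_i$ both insert $\|\na\Phi\|_{L^2}^2\lesssim\mathcal{P}/\ep$, giving $C'\sim\og^2\max\{D_i,K/\nu\}$. The compatibility $K_1C'<c_0c_1$ then reduces, up to universal factors, to something like $\og^3\lesssim\ug^3\cdot(\text{parameters})$, which is violated whenever the boundary data $\gamma_i$ has enough variation that $\og\gg\ug$. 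Your first fix (absorbing term 3 against the weighted dissipation $\int(D_1c_1+D_2c_2)|\na\Phi_0|^2$) only removes one of the three offending contributions to $K_1$; terms 1 and 2 still produce the $\og^2/\ug$ scaling, so it does not rescue the compatibility condition. Your second fix (iterating on windows of length $O(\ep)$) is circular: to control $\int_{t-\ep}^t(\|u\|_V^2+\sum_i\|\na c_i\|^2)\,ds$ via \eqref{fint} you need $\mathcal{F}(t-\ep)$, which contains $\mathcal{P}(t-\ep)$---exactly the quantity you are trying to bound---with coefficient $1$, so the Duhamel sum over windows produces a factor $\gtrsim K_1C_7\mathcal{P}$ that is not small unless a size condition equivalent to the compatibility condition already holds. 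Separately, note that Lemma \ref{l1} as stated has constants depending on ``parameters'', which include $\ep$; it does \emph{not} by itself give an $\ep$-independent absorbing ball for $\mathcal{P}$, so you cannot quote \eqref{qball} for your purpose. The paper avoids all of these issues by switching variables to $\mu_i$ and invoking a steady state and a log-Sobolev inequality---tools your proof does not use, and which I believe are what makes the $\ep$-independent bound attainable.
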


\begin{proof}
We wish to analyze the long time behavior of the quantity $\|\na\Phi\|_{L^2}.$ To do so, we consider the time evolution of the energy
\be
\mathcal{E}=\sum_{i=1}^2\int_\Omega c_i^*\psi\left(\fr{c_i}{c_i^*}\right)\,dx+\fr{\epsilon}{2}\|\na(\Phi-\Phi^*)\|_{L^2}^2+\fr{1}{2K}\|u\|_H^2
\ee
where $\psi(s)=s\log s-s+1$ and $c_1^*,c_2^*>0$ and $\Phi^*$ are chosen to be a steady state solution of the Nernst-Planck equations
\begin{align}
\div(\na c_i^*+z_ic_i^*\na\Phi^*)=&0,\quad i=1,2\la{npstar}\\
-\ep\D\Phi^*=\rho^*=&c_1^*-c_2^*\la{poisstar}\\
{c_i^*}_{|\pa\Omega}=&\gamma_i,\quad i=1,2\\
{\Phi^*}_{|\pa\Omega}=&W.\la{phisb}
\end{align}
In \cite{NPS}, the existence of steady state solutions to the Nernst-Planck-Navier-Stokes equations with boundary conditions \eqref{gamma}-\eqref{noslip} is shown. A streamlined version of the proof then establishes the existence of steady state solutions of the above uncoupled Nernst-Planck system \eqref{npstar}-\eqref{phisb}. In general, it is not known if such steady states are unique, and for our purpose, we simply fix one steady state solution. All we make use of in the proceeding computations is the fact that $c_i^*,\Phi^*$ satisfy \eqref{npstar}-\eqref{phisb} and that they satisfy the following a priori bounds.
\begin{lemma}\la{ub*}
Suppose $c_1^*,c_2^*,\Phi^*$ satisfy \eqref{npstar}-\eqref{phisb}. Then 
\begin{enumerate}
\item $\inf_{\pa\Omega}\gamma_ie^{z_iW}=\lambda_i\le c_i^*e^{z_i\Phi^*}\le \Lambda_i=\sup_{\pa\Omega}\gamma_ie^{z_iW}$ on $\Omega$ for $i=1,2$
\item $\min\{\inf_{\pa\Omega}W,\log(\lambda_1/\Lambda_2)^\fr{1}{2}\}\le\Phi^*\le\max\{\sup_{\pa\Omega}W,\log(\Lambda_1/\lambda_2)^\fr{1}{2}\}$
\item $\ug\le c_i^*\le \og$ for $i=1,2$ 
\end{enumerate}
with $\ug,\og$ defined as in Theorem \ref{maxthm}.
\end{lemma}
The proof of this lemma may be found in \cite{NPS} (see Proposition 3 and Remark 6 of this reference). The main takeaway of the lemma is that $c_1^*,c_2^*,\Phi^*$ satisfy upper and lower bounds that are \textit{independent of} $\ep.$

Proceeding with the proof of Proposition \ref{phib}, we reformulate our equations in terms of the electrochemical potentials
\be
\mu_i=\log c_i+z_i\Phi,\quad \mu_i^*=\log c_i^*+z_i\Phi^*,\quad i=1,2.
\ee
Using these variables, the Nernst-Planck equations of the NPS system may be rewritten
\be
\pa_t c_i+u\cdot\na c_i=D_i\div(c_i\na \mu_i),\quad i=1,2\la{rr1}
\ee
and similarly \eqref{npstar} may be rewritten
\be
\div(c_i^*\na \mu_i^*)=0,\quad i=1,2.\la{rr2}
\ee
Using \eqref{rr1}, \eqref{rr2} we write the equation satisfied by $c_i-c_i^*$
\be
\pa_t(c_i-c_i^*)=-u\cdot\na c_i+D_i\div(c_i\na(\mu_i-\mu_i^*)+(c_i-c_i^*)\na\mu_i^*),\quad i=1,2.
\ee
Now we multiply the above by $\mu_i-\mu_i^*$ and integrate by parts. On the left hand side, we obtain, summing in $i$

\be
\bal
\sum_i\left(\pa_t(c_i-c_i^*),\mu_i-\mu_i^*\right)_{L^2}&=\sum_i\left(\fr{d}{dt}\int_\Omega c_i^*\psi\left(\fr{c_i}{c_i^*}\right)\,dx+z_i\left(\pa_t(c_i-c_i^*),\Phi-\Phi^*\right)_{L^2}\right)\\
&=\sum_i\fr{d}{dt}\int_\Omega c_i^*\psi\left(\fr{c_i}{c_i^*}\right)\,dx+\left(\pa_t(\rho-\rho^*),\Phi-\Phi^*\right)_{L^2}\\
&=\sum_i\fr{d}{dt}\int_\Omega c_i^*\psi\left(\fr{c_i}{c_i^*}\right)\,dx+\fr{\epsilon}{2}\fr{d}{dt}\|\na(\Phi-\Phi^*)\|_{L^2}^2.\la{lhs}
\eal
\ee
On the right hand side, for $i=1$, we have, 
\be
\begin{aligned}
&\left(-u\cdot\na c_1+D_1\div(c_1\na(\mu_1-\mu_1^*)+(c_1-c_1^*)\na\mu_1^*),\mu_1-\mu_1^*\right)_{L^2}\\
=&-\left(u\cdot\na  c_1,\mu_1-\mu_1^*\right)_{L^2}-D_1\int_\Omega c_1|\na(\mu_1-\mu_1^*)|^2\,dx-D_1\left((c_1-c_1^*)\na\mu_1^*,\na(\mu_1-\mu_1^*)\right)_{L^2}\la{int1}
\end{aligned}
\ee

From this point on, we assume enough time has passed so that $\fr{\ug}{2}\le c_i\le \fr{3\og}{2},$ $i=1,2.$ That is, we restrict ourselves to $t\ge T_\fr{\ug}{2}$ where $T_\fr{\ug}{2}$ is obtained from Theorem \ref{maxthm} by taking $\delta=\ug/2.$ Then from \eqref{int1}, using a Young's inequality, we obtain

\be
\begin{aligned}
&\left(-u\cdot\na c_1+D_1\div(c_1\na(\mu_1-\mu_1^*)+(c_1-c_1^*)\na\mu_1^*),\mu_1-\mu_1^*\right)_{L^2}\\
\le &-\left(u\cdot\na  c_1,\mu_1-\mu_1^*\right)_{L^2}-\fr{D_1}{2}\int_\Omega c_1|\na(\mu_1-\mu_1^*)|^2\,dx+C\int_\Omega c_1^*|\na \mu_1^*|^2\,dx.\la{int2}
\end{aligned}
\ee
The constant $C>0$ above includes lower and upper bounds on $c_i, c_i^*$ in terms of $\ug,\og$. We now take a closer look at the term involving $u$. We have, integrating by parts, using Hölder and Young's inequalities, bounds on $c_i, c_i^*$, the Poincaré bound $\|u\|_H\le C\|u\|_V$ and the fact that $\div u=0$,
\be
\bal
-\left(u\cdot\na c_1,\mu_1-\mu_1^*\right)_{L^2}=&-\left(u\cdot\na c_1,\log c_1+\Phi\right)_{L^2}+\left(u\cdot\na c_1,\mu_1^*\right)_{L^2}\\
=&-\left(u,\na(c_1\log c_1-c_1)\right)_{L^2}+\int_\Omega uc_1\cdot\na\Phi\,dx-\left(u c_1,\na\mu_1^*\right)_{L^2}\\
\le& \int_\Omega uc_1\cdot\na\Phi\,dx+\fr{\nu}{4K}\|u\|_V^2+C\int_\Omega c_1^*|\na\mu_1^*|^2\,dx.\la{int3}
\eal
\ee
The constant $C$ depends on bounds on $c_i,c_i^*$ only through $\ug,\og$. We now estimate the integral $\int_\Omega c_1^*|\na\mu_1^*|^2\,dx$ that occurs both in \eqref{int2} and \eqref{int3}. To this end, we take $\tilde\mu_1$ to be the unique solution to 
\be
\D \tilde\mu_1=0 \text{ in } \Omega,\quad \tilde\mu_1=\log\gamma_1+W \text{ on } \pa\Omega.
\ee
Then, multiplying \eqref{rr2} by $\mu_1^*-\tilde\mu_1$ and integrating by parts, we obtain
\be
\int_\Omega c_1^*|\na \mu_1^*|^2\,dx=\int_\Omega c_1^*\na\mu_1^*\cdot\na\tilde\mu_1\,dx
\ee
from which we obtain, after a Cauchy–Schwarz and Young's inequality,
\be
\int_\Omega c_1^*|\na\mu_1^*|^2\,dx\le C\la{int4}
\ee
for some constant $C$ independent of $\epsilon.$ Thus, returning to \eqref{int2} and using \eqref{int3} and \eqref{int4}, we obtain
\be
\bal
&\left(-u\cdot\na c_1+D_1\div(c_1\na(\mu_1-\mu_1^*)+(c_1-c_1^*)\na\mu_1^*),\mu_1-\mu_1^*\right)_{L^2}\\
\le& C-\fr{D_1}{2}\int_\Omega c_1|\na(\mu_1-\mu_1^*)|^2\,dx+\int_\Omega uc_1\cdot\na\Phi\,dx+\fr{\nu}{4K}\|u\|_V^2.\la{int5}
\eal
\ee
Analogous computations for $i=2$ give us
\be
\bal
&\left(-u\cdot\na c_2+D_2\div(c_2\na(\mu_2-\mu_2^*)+(c_2-c_2^*)\na\mu_2^*),\mu_2-\mu_2^*\right)_{L^2}\\
\le& C-\fr{D_2}{2}\int_\Omega c_2|\na(\mu_2-\mu_2^*)|^2\,dx-\int_\Omega uc_2\cdot\na\Phi\,dx+\fr{\nu}{4K}\|u\|_V^2.\la{int6}
\eal
\ee
Then combining \eqref{int5} and \eqref{int6} with \eqref{lhs}, we obtain
\be
\bal
&\fr{d}{dt}\left(\sum_{i=1}^2\int_\Omega c_i^*\psi\left(\fr{c_i}{c_i^*}\right)\,dx+\fr{\ep}{2}\|\na(\Phi-\Phi^*)\|_{L^2}^2\right)+\sum_{i=1}^2\fr{D_i}{2}\int_\Omega c_i|\na(\mu_i-\mu_i^*)|^2\,dx\\
\le& C+\int_\Omega u\rho\cdot\na\Phi\,dx+\fr{\nu}{2K}\|u\|_V^2.\la{int7}
\eal
\ee
To close the estimates, we take the inner product of \eqref{stokes} with $\fr{1}{K}u$ and integrate by parts,
\be
\fr{1}{2K}\fr{d}{dt}\|u\|_H^2+\fr{\nu}{K}\|u\|_V^2=-\int_\Omega u\rho\cdot\na\Phi\,dx.\la{int8}
\ee
Thus, adding \eqref{int8} to \eqref{int7}, we obtain
\be\la{INT}
\fr{d}{dt}\mathcal{E}+\sum_{i=1}^2\fr{D_i}{2}\int_\Omega c_i|\na(\mu_i-\mu_i^*)|^2\,dx+\fr{\nu}{2K}\|u\|_V^2\le C.
\ee
Poincaré's inequality gives us the following control, $\|u\|_H\le C\|u\|_V.$ It remains to obtain an appropriate lower bound for the dissipation terms $\int_\Omega c_i|\na(\mu_i-\mu_i^*)|^2\,dx.$ Due to the pointwise lower and upper bounds on $c_i, c_i^*$, the following log-Sobolev type inequality is available,
\be\la{LS}
\sum_{i=1}^2\int_\Omega c_i^*\psi\left(\fr{c_i}{c_i^*}\right)\,dx+\fr{\ep}{2}\|\na(\Phi-\Phi^*)\|_{L^2}^2\le C\sum_{i=1}^2\fr{D_i}{2}\int_\Omega c_i|\na(\mu_i-\mu_i^*)|^2\,dx
\ee
where $C$ depends only $D_i$, the diameter of the domain, and lower and upper bounds of $c_i,c_i^*$ (given in terms of $\ug$ and $\og$ since $t\ge T_\fr{\ug}{2})$. We refer the reader to \cite{NPS} for a proof of inequality \eqref{LS}. Using this inequality in \eqref{INT}, we obtain for $t\ge T_\fr{\ug}{2}$,
\be
\fr{d}{dt}\mathcal{E}+C\mathcal{E}\le C.\la{Eineq}
\ee
Thus, by applying a Grönwall inequality to \eqref{Eineq}, we find
\be
\mathcal{E}(t)\le \mathcal{E}(T_\fr{\ug}{2})e^{-C(t-T_\fr{\ug}{2})}+C, \quad t\ge T_\fr{\ug}{2}.\la{ET}
\ee
Next, since $\mathcal{E}(T_\fr{\ug}{2})e^{-C(t-T_\fr{\ug}{2})}\searrow 0$ as $t\to\infty$, we choose $T^\ep\ge T_\fr{\ug}{2}$, depending on $\mathcal{E}(T_\fr{\ug}{2})$, so that for all $t\ge T^\ep$, we have $\mathcal{E}(T_\fr{\ug}{2})e^{-C(t-T_\fr{\ug}{2})}\le C$ for some $C$ independent of $\ep$ and initial data. We observe that the size of $\mathcal{E}(T_\fr{\ug}{2})$ can be estimated by initial data using \eqref{FB} (with $t=0$) and the elementary inequality $\psi(x)\le\max\{1,(x-1)^2/2\}$ applied to $x=c_i/c_i^*$.

Thus, we have shown that
\be
\|\na(\Phi(t)-\Phi^*)\|_{L^2}^2\le\fr{2}{\ep}\mathcal{E}(t)\le \fr{C}{\ep},\quad t\ge T^\ep\la{phibb}
\ee
for $C$ independent of $\ep$ and initial data.

Lastly, taking $\tilde \Phi$ such that 
\be
\D\tilde \Phi=0 \text{ in } \Omega, \quad \tilde \Phi=W \text{ on } \pa\Omega
\ee 
we multiply \eqref{poisstar} by $\Phi^*-\tilde \Phi$ and integrate by parts to find,
\be
\ep\int_\Omega |\na\Phi^*|^2\,dx=\ep\int_\Omega \na\Phi^*\cdot\na\tilde\Phi\,dx+\int_\Omega\rho^*(\Phi^*-\tilde\Phi)\,dx.\la{eee}
\ee
From Lemma \ref{ub*}, we have that $\rho^*$ and $\Phi^*$ are both bounded by constants independent of $\epsilon$. Thus we have
\be
\fr{\ep}{2}\int_\Omega|\na\Phi^*|^2\,dx\le \fr{\ep}{2}\int_\Omega|\na\tilde\Phi|^2\,dx+C
\ee
and we conclude that
\be
\|\na\Phi^*\|_{L^2}\le \fr{C}{\ep^\fr{1}{2}}
\ee
for some $C$ independent of $\ep$. Combining this last estimate with \eqref{phibb} gives us the desired result \eqref{phil2}.
\end{proof}

Contained in the preceding proof of Proposition \ref{phib} is the proof of the following proposition
\begin{prop}
For all $t\ge T^\ep$ (with $T^\ep$ determined by Proposition \ref{phib}), we have
\be
\|u(t)\|_H\le B_3
\ee
for $B_3>0$ independent of $\ep$ and initial conditions.\la{B4}
\end{prop}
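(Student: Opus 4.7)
The plan is to read the bound on $\|u\|_H$ directly off the energy functional already introduced in the proof of Proposition \ref{phib}. Recall that
\[
\mathcal{E}=\sum_{i=1}^2\int_\Omega c_i^*\psi\left(\fr{c_i}{c_i^*}\right)\,dx+\fr{\epsilon}{2}\|\na(\Phi-\Phi^*)\|_{L^2}^2+\fr{1}{2K}\|u\|_H^2,
\]
and that all three summands are nonnegative (the first because $\psi(s)=s\log s-s+1\ge 0$, the second manifestly, the third trivially). In particular one has the pointwise-in-time inequality $\|u(t)\|_H^2\le 2K\,\mathcal{E}(t)$, which reduces the problem to an $\ep$-independent upper bound on $\mathcal{E}(t)$ for $t\ge T^\ep$.

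First, I would invoke the differential inequality $\fr{d}{dt}\mathcal{E}+C\mathcal{E}\le C$ derived in the proof of Proposition \ref{phib}, valid for $t\ge T_{\ug/2}$, together with the Grönwall bound $\mathcal{E}(t)\le \mathcal{E}(T_{\ug/2})e^{-C(t-T_{\ug/2})}+C$ that follows. Next, I would use the fact that $T^\ep$ was chosen in that proof precisely so that the transient term $\mathcal{E}(T_{\ug/2})e^{-C(t-T_{\ug/2})}$ is dominated by a constant $C'$ independent of $\ep$ and of initial data, uniformly for $t\ge T^\ep$. Combining these two observations yields $\|u(t)\|_H^2\le 2KC'$ for all $t\ge T^\ep$, so it suffices to set $B_3=\sqrt{2KC'}$.

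The only point requiring attention is verifying that the absorbing constant $C'$ is genuinely independent of both $\ep$ and the initial data. This has, however, already been established inside the proof of Proposition \ref{phib}: the forcing constant on the right-hand side of $\fr{d}{dt}\mathcal{E}+C\mathcal{E}\le C$ depends only on $\ug,\og$, the domain, and the system parameters, via the $\ep$-independent pointwise bounds on $c_i^*$ and $\Phi^*$ supplied by Lemma \ref{ub*}; and the choice of $T^\ep$ was already arranged to wash out the dependence on initial data through the exponentially decaying transient. Hence no new estimate is needed, and the main (non-)obstacle is simply to remark that the kinetic-energy piece of $\mathcal{E}$ had not been highlighted in the statement of Proposition \ref{phib} but was controlled all along. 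This is precisely why the result is said to be contained in the preceding proof.
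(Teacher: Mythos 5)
Your argument is exactly the paper's: read $\|u\|_H^2\le 2K\mathcal{E}(t)$ off the nonnegativity of the three summands of $\mathcal{E}$, and then apply the Grönwall bound \eqref{ET} together with the choice of $T^\ep$ already made in the proof of Proposition \ref{phib}. Correct, and same approach.
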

\begin{proof}
Recalling that $\fr{1}{2K}\|u(t)\|_H^2\le \mathcal E(t)$, the statement of the proposition follows from \eqref{ET} and the choice of $T^\ep$.
\end{proof}

We now prove the main result of this section, Theorem \ref{lten}:

\begin{proof}
We define $\Gamma_i$, the harmonic extension of $\gamma_i$, satisfying
\be
\D\Gamma_i=0 \text{ in } \Omega,\quad \Gamma_i=\gamma_i \text{ on } \pa\Omega.\la{Gamma}
\ee
We restrict ourselves to $t\ge T^\ep$, where $T^\ep$ is determined by Proposition \ref{phib}. We recall that in particular $t\ge T^\ep$ implies $\fr{\ug}{2}\le c_i(t)\le \fr{3\og}{2}$ (c.f. paragraph below \eqref{int1}) and $\|\na\Phi(t)\|_{L^2}\le \fr{B_2}{\ep^\fr{1}{2}}$ (Proposition \ref{phib}).

We multiply \eqref{np} by $\fr{1}{D_i}\log\fr{c_i}{\Gamma_i}$ and integrate by parts to obtain
\be
\fr{1}{D_i}\fr{d}{dt}\int_\Omega \Gamma_i\psi\left(\fr{c_i}{\Gamma_i}\right)\,dx= I_1^i+I_2^i+I_3^i\la{III}
\ee
where
\begin{align}
    I_1^i&=\int_\Omega \D c_i\log\fr{c_i}{\Gamma_i}\,dx\\
    I_2^i&=-D_i^{-1}\int_\Omega u\cdot\na c_i\log\fr{c_i}{\Gamma_i}\,dx\\
    I_3^i&=z_i\int_\Omega \div(c_i\na\Phi)\log\fr{c_i}{\Gamma_i}\,dx.
\end{align}
We estimate $I_1^i$ by integrating by parts and using the bound $c_i\le\fr{3\og}{2}$ and a Young's inequality,
\be
I_1^i=-\int_\Omega \fr{|\na c_i|^2}{c_i}\,dx+\int_\Omega \fr{\na c_i}{c_i^\fr{1}{2}}\cdot(\na\log\Gamma_i)c_i^\fr{1}{2}\,dx\le -\fr{1}{2}\int_\Omega \fr{|\na c_i|^2}{c_i}\,dx+C.
\ee
To estimate $I_2^i$, we first note that 
\be
\int_\Omega u\cdot\na c_i\log c_i\,dx=\int_\Omega u\cdot\na(c_i\log c_i-c_i)\,dx=0
\ee
so that, using $c_i\le \fr{3\og}{2}$ and Proposition \ref{B4},
\be
I_2^i\le D_i^{-1}\left|\int_\Omega u\cdot\na c_i\log\Gamma_i\,dx\right|=D_i^{-1}\left|\int_\Omega uc_i\cdot\na\log\Gamma_i\,dx\right|\le C\|u\|_H\le C.
\ee
Lastly we estimate $I_3^i$. Integrating by parts, we have
\be
\bal
I_3^i&=-z_i\int_\Omega\na c_i\cdot\na\Phi\,dx+z_i\int_\Omega c_i\na\Phi\cdot\na\log\Gamma_i\,dx\\
&= J_1^i+J_2^i.
\eal
\ee
First, using $c_i\le\fr{3\og}{2}$, we have, by Proposition \ref{phib}
\be
J_2^i\le C\|\na\Phi\|_{L^2}\le \fr{C}{\ep^\fr{1}{2}}.\la{J2}
\ee
To estimate $J_1^i$, we construct another extension of the boundary data $\gamma_i$ to $\Omega$ in the following way. We fix a family of smooth, nonnegative cutoff functions $\chi^\ep:\bar\Omega\to\mathbb{R}$ so that $0\le\chi^\ep\le 1,$ ${\chi^\ep}_{|\pa\Omega}=1$ and $\chi^\ep(x)=0$ for all $x\in \Omega$ such that $d(x,\pa\Omega)=\inf_{y\in\pa\Omega}|x-y|\ge \ep^\fr{1}{3}.$ We further require that $|\na\chi^\ep|\le \fr{C}{\ep^\fr{1}{3}}$, where the constant $C$ depends on the domain but not on $\ep.$ Then, the function $\Gamma_i^\ep$ defined by $\Gamma_i^\ep=\chi^\ep\Gamma_i$ is an extension of $\gamma_i$ that vanishes away from the boundary $\pa\Omega$ at distances larger than $\ep^\fr{1}{3}.$ Furthermore, we have $|\Gamma_i^\ep|\le \og$ and $|\na\Gamma_i^\ep|\le |\Gamma_i\na\chi^\ep|+|\chi^\ep\na\Gamma_i|\le \fr{C}{\ep^\fr{1}{3}}.$ It follows from the latter bound and the fact that $\Gamma_i^\ep$ is supported on a set of measure of order $\ep^\fr{1}{3}$ that 
\be
\|\na\Gamma_i^\ep\|_{L^2}=\left(\int_\Omega |\na\Gamma_i^\ep|^2\,dx\right)^\fr{1}{2}\le C(\ep^\fr{1}{3} \ep^{-\fr{2}{3}})^\fr{1}{2}=C\ep^{-\fr{1}{6}}.\la{16}
\ee

Now we estimate
\be
\bal
J_1^i&=-z_i\int_\Omega \na c_i\cdot\na\Phi\,dx\\
&=-z_i\int_\Omega \na(c_i-\Gamma_i^\ep)\cdot\na\Phi\,dx-z_i\int_\Omega \na \Gamma_i^\ep\cdot\na\Phi\,dx\\
&\le -\fr{z_i}{\epsilon}\int_\Omega c_i\rho\,dx+\fr{z_i}{\epsilon}\int_\Omega \Gamma_i^\ep\rho\,dx+\|\na\Gamma_i^\ep\|_{L^2}\|\na\Phi\|_{L^2}\\
&\le -\fr{z_i}{\ep}\int_\Omega c_i\rho\,dx+\fr{1}{2\ep}\int_\Omega \rho^2\,dx+\fr{1}{2\ep}\int_\Omega(\Gamma_i^\ep)^2 \,dx+\fr{C}{\ep^\fr{2}{3}}\\
&\le-\fr{z_i}{\ep}\int_\Omega c_i\rho\,dx+\fr{1}{2\ep}\int_\Omega \rho^2\,dx+\fr{C}{\ep^\fr{2}{3}}
\eal
\ee
where in the fourth line, we used Proposition \ref{phib} and \eqref{16}, and in the last line, we used the fact that $\Gamma_i^\ep$ is supported on a set of measure of order $\ep^\fr{1}{3}.$ Now, we collect all our estimates for $I_1^i, I_2^i, I_3^i$ and sum in $i$ to obtain from \eqref{III},
\be
\fr{d}{dt}\sum_{i=1}^2\fr{1}{D_i}\int_\Omega \Gamma_i\psi\left(\fr{c_i}{\Gamma_i}\right)\,dx+\fr{1}{2}\sum_{i=1}^2\int_\Omega \fr{|\na c_i|^2}{c_i}\,dx+\fr{1}{2\ep}\int_\Omega\rho^2\,dx\le\fr{C}{\ep^\fr{1}{2}}+\fr{C}{\ep^\fr{2}{3}}+C\le \fr{C}{\ep^\fr{2}{3}}.\la{gcgc}
\ee
Then, for any $\tau\ge \epsilon^\fr{2}{3}$ and $T\ge T^\ep$, integrating \eqref{gcgc} from $T$ to $T+\tau$, we have
\be
\fr{1}{2\ep}\int_T^{T+\tau}\int_\Omega\rho^2\,dx\,ds\le \sum_{i=1}^2\fr{1}{D_i}\int_\Omega \Gamma_i\psi\left(\fr{c_i(T)}{\Gamma_i}\right)\,dx+\fr{C\tau}{\ep^\fr{2}{3}}.\la{!}
\ee
Since for times larger than or equal to $T^\ep$, $c_i$ obeys $\fr{\ug}{2}\le c_i\le \fr{3\og}{2}$, we obtain from \eqref{!}
\be\la{204}
\fr{1}{\tau}\int_T^{T+\tau}\int_\Omega\rho^2\,dx\,ds\le \fr{C\ep}{\tau}+C\ep^\fr{1}{3}\le C\ep^\fr{1}{3},\quad \tau\ge \ep^\fr{2}{3}.
\ee
Thus we have shown \eqref{BB1}. To show \eqref{BB2}, we compute
\be\bal
\limsup_{t\to\infty}\fr{1}{t}\int_0^t\|\rho(s)\|_{L^2}^2\,ds\le& \limsup_{t\to\infty}\fr{1}{t}\int_0^{T^\epsilon}\|\rho(s)\|_{L^2}^2 +\limsup_{t\to\infty}\fr{1}{t}\int_{T^\epsilon}^{t}\|\rho(s)\|_{L^2}^2\,ds\\
\le&0+\limsup_{t\to\infty}\fr{1}{t-T^\epsilon}\int_{T^\epsilon}^t\|\rho(s)\|_{L^2}^2\,ds\\
\le&C\epsilon^\fr{1}{3}
\eal\ee
where in the last line we used \eqref{204}. This completes the proof.
\end{proof}

\end{document}